\crefname{equation}{}{}
\crefname{lemma}{Lemma}{Lemmas}
\crefname{theorem}{Theorem}{Theorems}
\crefname{discr}{Discretization}{Discretizations}
\DeclareMathOperator{\D}{D}
\apptocmd{\sloppy}{\hbadness 10000\relax}{}{}
\newcommand{\dual}[1]{\langle {#1} \rangle}
\newcommand{\Dual}[1]{\left\langle {#1} \right\rangle}
\newcommand{\nm}[1]{\lVert {#1} \rVert}
\newcommand{\Nm}[1]{\left\lVert {#1} \right\rVert}
\newcommand{\snm}[1]{\lvert {#1} \rvert}
\newcommand{\ssnm}[1]
{
  \left\vert\kern-0.25ex
  \left\vert\kern-0.25ex
  \left\vert
  {#1}
  \right\vert\kern-0.25ex
  \right\vert\kern-0.25ex
  \right\vert
}
\def\spher@harm#1{%
  \vbox{\hbox{%
    \offinterlineskip
    \valign{&\hb@xt@2\p@{\hss$##$\hss}\vskip.2ex\cr#1\crcr}%
  }\vskip-.36ex}%
}
\def\gshone{\spher@harm{.}}
\def\gshtwo{\spher@harm{.&.}}
\def\gshthree{\spher@harm{.&.&.}}
\let\gsh\spher@harm
\newtheorem{lemma}{Lemma}[section]
\newtheorem{remark}{Remark}[section]
\newtheorem{theorem}{Theorem}[section]
\def\@captype{table}\makeatother
\begin{document}
\title{
  \Large\bf Temporally semidiscrete approximation of a Dirichlet boundary
  control for a fractional/normal evolution equation with a final observation
  \thanks{ This work was supported by National Natural Science Foundation of
  China (11901410).
  }
}

\author{
  {\sc Qin Zhou\thanks{Email:zqmath@aliyun.com}
  and
  Binjie Li\thanks{Corresponding author. Email: libinjie@scu.edu.cn}
  } \\
  School of Mathematics, Sichuan University.
}

\date{}
\maketitle

\begin{abstract} 
  %
  Optimal Dirichlet boundary control for a fractional/normal evolution with a
  final observation is considered. The unique existence of the solution and the
  first-order optimality condition of the optimal control problem are derived.
  The convergence of a temporally semidiscrete approximation is rigorously
  established, where the control is not explicitly discretized and the state
  equation is discretized by a discontinuous Galerkin method in time. Numerical
  results are provided to verify the theoretical results.

\end{abstract}
\medskip\noindent{\bf Keywords:} Dirichlet boundary control; fractional
evolution equation; discontinuous Galerkin method; convergence.

\section{Introduction}

There is an extensive literature on the numerical optimization with PDE
constraints. So far, most of the literature focuses on the distributed control
problems, and the works on the Dirichlet boundary control problems are rather
limited. Compared with the distributed control problems, the Dirichlet boundary
control problems are more challenging in the following senses. Firstly, the
solution of the state equation of a Dirichlet boundary control problem is of
significantly lower regularity than that of a distributed control problem, and
this increases the difficulty in both theoretical and numerical analysis.
Secondly, in the weak form of the solution of the state equation of a Dirichlet
boundary control problem, the test function space is more regular than the trial
function space, and hence the weak form is not appropriate for the
discretization. Thirdly, since the normal derivative of the adjoint state occurs
in the first-order optimality condition, the discrete first-order optimality
condition will essentially involve the discrete normal derivative of the
discrete adjoint state, and this increases the implementation difficulty.

We summarize the works on the parabolic Dirichlet boundary control problems
briefly as follows. Using an integral representation formula derived by the
semigroup theory (cf.~\cite[Section~4.12]{Balakrishnan1981} and
\cite{Lasiecka1980}), Lasiecka \cite{Lasiecka1980-optim,Lasiecka1984} analyzed
spatial Galerkin approximations of an optimal Dirichlet boundary control problem
and a time optimal Dirichlet boundary control problem for the parabolic
equations. Kunisch and Vexler \cite{Kunisch2007} analyzed constrained Dirichlet
boundary control problems for a class of parabolic equations and derived the
convergence of the PDAS strategy for two Dirichlet boundary control problems.
Applying the Robin penalization method to a Dirichlet boundary control problem
for a parabolic equation with a final observation, Belgacem et
al.~\cite{Belgacem2011} obtained a penalized Robin boundary control problem.
Gong et~al.~\cite{Gong2016} analyzed a finite element approximation of a
Dirichlet boundary control for a parabolic equation, where the variational
discretization approach \cite{Hinze2005} was used and the state equation was
discretized by the usual $ H^1 $-conforming $ P1 $-element in space and
discretized by the $\text{dG}(0)$ scheme in time. Recently, Gong and Li
\cite{Gong-Li-2019} improved the spatial accuracy derived in \cite{Gong2016},
using the maximal $ L^p $-regularity theory. We note that, for the state
equation with rough Dirichlet boundary data,
\cite{Lasiecka1980-optim,Lasiecka1984} used the semigroup theory to define the
solution whereas \cite{Gong2016,Gong-Li-2019,Kunisch2007} used the transposition
technique to define the solution (called the very weak solution).


For the numerical analysis of parabolic Neumann/Robin boundary control problems,
we refer the reader to
\cite{Walter1989,Chrysafinos2014,Knowles1982,Malanowski1981}. For the numerical
analysis of other optimal control problems for parabolic equations, we refer the
reader to
\cite{Deckelnick2011,Gong2014,Vexler2013,Vexler2016a,Meidner2007,Vexler2008I,Vexler2008II,Meidner2011}
and the references therein. Although the spatial discretization is not
considered in this paper, we would like to refer the reader to
\cite{Berggren2004,French1991,French1993,Lasiecka1986} for the numerical
analysis of elliptic and parabolic equations with rough Dirichlet boundary data.


To our best knowledge, no convergence result is available for the Galerkin-type
approximations of the Dirichlet boundary control problems governed by the
parabolic equations with final observations. The fractional evolution equation
is an extension of the normal evolution equation, widely used to describe the
physical phenomena with memory effect \cite{Podlubny1998}. Recently, Harbir et
al.~\cite{Harbir2016} studied an optimal distributed control problem for a
space-time fractional diffusion equation. For the numerical analysis of the
optimal distributed problems governed by the time fractional diffusion
equations, we refer the reader to
\cite{Gunzburger2019,Jin_optim_2020,Li-Xie-Yan2020,Zhang2019}. To our knowledge,
no numerical analysis is available for the Dirichlet boundary control problems
governed by the fractional evolution equations. Hence, this paper tries to
analyze the Dirichlet boundary control problems for the fractional and normal
evolution equations in a unified way.

In this paper, we establish the convergence of a temporally semidiscrete
approximation of an abstract optimal control problem governed by a
fractional/normal evolution equation with a final observation. This
approximation uses the variational discretization concept \cite{Hinze2005} and
uses a discontinuous Galerkin method to discretize the state equation in time.
The discontinuous Galerkin method is the famous $\text{dG}(0)$ scheme for the
normal evolution equation, and is equivalent to the well-known L1 scheme
\cite{Lin2007} with uniform temporal grids for the fractional evolution
equations.
The derived numerical analysis is applied to a Dirichlet boundary control
problem.
We note that there are many works (see
\cite{Jin2015IMA,Jin2013SIAM,Jin2016-L1,Jin2016,Jin2018-time-dependtn,Li2019SIAM,Lubich1996,Luo2019}
and the references therein) devoted to the numerical analysis of the fractional
diffusion equations with rough initial value and source term, but, to our
knowledge, no numerical analysis is available for the fractional diffusion
equation with rough Dirichlet boundary value. This paper also fills in this gap.
The rest of this paper is organized as follows. \cref{sec:abs_optim} establishes
the convergence of a temporally semidiscrete approximation of an abstract
optimal control problem. \cref{sec:appl} applies the theory developed in the
previous section to a Dirichlet boundary control problem. \cref{sec:numer}
performs three numerical experiments to confirm the theoretical results.

\section{An abstract optimal control problem}
\label{sec:abs_optim}
\subsection{Preliminaries}
\label{sec:pre}
We will use the following conventions: for each linear vector space, the field
of the scalars is $ \mathbb C $; for a Hilbert space $ \mathcal X $, we use $
(\cdot,\cdot)_\mathcal X $ to denote its inner product; for a Banach space $
\mathcal B $, we use $ \dual{\cdot,\cdot}_{\mathcal B} $ to denote a duality
paring between $ \mathcal B^* $ (the dual space of $ \mathcal B $) and $
\mathcal B $; for a linear operator $ A $, $ \rho(A) $ denotes the resolvent set
of $ A $ and $ R(z,A) $ denotes the inverse of $ z-A $ for each $ z \in \rho(A)
$; for two Banach spaces $ \mathcal B_1 $ and $ \mathcal B_2 $, $ \mathcal
L(\mathcal B_1, \mathcal B_2) $ is the set of all bounded linear operators from
$ \mathcal B_1 $ to $ \mathcal B_2 $, and $ \mathcal L(\mathcal B_1, \mathcal
B_1) $ is abbreviated to $ \mathcal L(\mathcal B_1) $; $ I $ denotes the
identity map; for a Lebesgue measurable subset $ \mathcal D \subset \mathbb R^l
$, $ 1 \leqslant l \leqslant 4 $, $ \dual{p, q}_{\mathcal D} $ means the
integral $ \int_{\mathcal D} p \overline q $, where $ \overline q $ is the
conjugate of $ q $; for a function $ v $ defined on $ (0,T) $, by $ v(t-) $, $ 0
< t \leqslant T $, we mean the limit $ \lim_{s \to t-} v(s) $; the notation $
C_\times $ means a positive constant, depending only on its subscript(s), and
its value may differ at each occurrence; for any $ 0 < \theta < \pi $, define
\begin{align} 
  \Sigma_\theta &:= \{
    re^{i\gamma}: \, r > 0,
    -\theta < \gamma < \theta
  \}, \label{eq:Sigma_theta-def} \\
  \Gamma_\theta &:= \{re^{-i\theta}: r \geqslant 0 \} \cup \{re^{i\theta}: r >0\}
  \label{eq:Upsilon-def} \\
  \Upsilon_\theta &:= \{
    z \in \Gamma_\theta:\
    -\pi \leqslant \Im z \leqslant \pi
  \}, \label{eq:Upsilon1-def}
\end{align}
where $ i $ is the imaginary unit and $ \Gamma_\theta $ and $ \Upsilon_\theta $
are so oriented that the negative real axis is to their left.

\subsubsection{Time fractional Sobolev spaces} 
Assume that $ -\infty < a < b < \infty $ and $ \mathcal B $ is a Banach space.
Define
\begin{align*}
  {}_0H^1(a,b;\mathcal B) &:= \left\{
    v \in L^2(a,b;\mathcal B):\,
    v' \in L^2(a,b;\mathcal B), \, v(a) = 0
  \right\}, \\
  {}^0H^1(a,b;\mathcal B) &:= \left\{
    v \in L^2(a,b;\mathcal B):\,
    v' \in L^2(a,b;\mathcal B), \, v(b) = 0
  \right\},
\end{align*}
and endow them with the two norms
\begin{align*}
  \nm{v}_{{}_0H^1(a,b;\mathcal B)} &:=
  \nm{v'}_{L^2(a,b;\mathcal B)}
  \quad\forall v \in {}_0H^1(a,b;\mathcal B), \\
  \nm{v}_{{}^0H^1(a,b;\mathcal B)} &:=
  \nm{v'}_{L^2(a,b;\mathcal B)}
  \quad\forall v \in {}^0H^1(a,b;\mathcal B),
\end{align*}
respectively, where $ v' $ is the first-order weak derivative of $ v $.

For each $ 0 < \gamma < 1 $, define
\begin{align*}
  {}_0H^\gamma(a,b;\mathcal B) &:=
  (L^2(a,b;\mathcal B),\, {}_0H^1(a,b;\mathcal B))_{\gamma,2}, \\
  {}^0H^\gamma(a,b;\mathcal B) &:=
  (L^2(a,b;\mathcal B),\, {}^0H^1(a,b;\mathcal B))_{\gamma,2},
\end{align*}
where $ (\cdot,\cdot)_{\gamma,2} $ means the interpolation space defined by the
$ K $-method (cf.~\cite{Lunardi2018}). For convenience, the spaces $
{}_0H^\gamma(a,b;\mathbb C) $ and $ {}^0H^\gamma(a,b;\mathbb C) $ will be
abbreviated to $ {}_0H^\gamma(a,b) $ and $ {}^0H^\gamma(a,b) $, respectively.

\subsubsection{Riemann-Liouville fractional calculus operators}
Assume that $ -\infty < a < b < \infty $ and $ \mathcal X $ is a separable
Hilbert space. For any $ 0 < \gamma < 1 $, define
\begin{align*}
  \left( \D_{a+}^{-\gamma} v\right)(t) &:=
  \frac1{ \Gamma(\gamma) }
  \int_a^t (t-s)^{\gamma-1} v(s) \, \mathrm{d}s,
  \quad \text{a.e.}~t \in (a,b), \\
  \left(\D_{b-}^{-\gamma} v\right)(t) &:=
  \frac1{ \Gamma(\gamma) }
  \int_t^b (s-t)^{\gamma-1} v(s) \, \mathrm{d}s,
  \quad\text{a.e.}~t \in (a,b),
\end{align*}
for all $ v \in L^1(a,b;\mathcal X) $, where $ \Gamma(\cdot) $ is the gamma
function. In addition, let $ \D_{a+}^0 $ and $ \D_{b-}^0 $ be the identity
operator on $ L^1(a,b;\mathcal X) $. For any $ 0 < \gamma \leqslant 1 $, define
\begin{align*}
  \D_{a+}^\gamma v & := \D \, \D_{a+}^{\gamma-1}v, \\
  \D_{b-}^\gamma v & := -\D \, \D_{b-}^{\gamma-1}v,
\end{align*}
for all $ v \in L^1(a,b;\mathcal X) $, where $ \D $ is the first-order
differential operator in the distribution sense.

Assume that $ 0 < \gamma < 1 $. For any $ v \in {}_0H^\gamma(a,b;\mathcal X) $
and $ w \in {}^0H^\gamma(a,b;\mathcal X) $, we have
\begin{align*}
  C_1 \nm{v}_{{}_0H^\gamma(a,b;\mathcal X)} & \leqslant
  \nm{\D_{a+}^\gamma v}_{L^2(a,b;\mathcal X)} \leqslant
  C_2 \nm{v}_{{}_0H^\gamma(a,b;\mathcal X)}, \\
  C_1 \nm{w}_{{}^0H^\gamma(a,b;\mathcal X)} & \leqslant
  \nm{\D_{b-}^\gamma w}_{L^2(a,b;\mathcal X)} \leqslant
  C_2 \nm{w}_{{}^0H^\gamma(a,b;\mathcal X)},
\end{align*}
where $ C_1 $ and $ C_2 $ are two positive constants depending only on $ \gamma
$. Let $ \mathcal X^* $ be the dual space of $ \mathcal X $. For any $ v \in
{}_0H^{\gamma/2}(a,b;\mathcal X^*) $ and $ w \in {}^0H^{\gamma/2}(a,b;\mathcal
X) $, the equality
\begin{equation}
  \label{eq:dual}
  \int_0^T \dual{\D_{a+}^\gamma v, w}_{\mathcal X} \, \mathrm{d}t =
  \int_0^T \dual{v, \D_{b-}^\gamma w}_{\mathcal X} \, \mathrm{d}t
\end{equation}
holds for the following two cases: $ v \in {}_0H^\gamma(a,b;\mathcal X^*) $ and
$ w \in {}^0H^\gamma(a,b;\mathcal X) $; $ \D_{a+}^\gamma v \in
L^{2/(1+\gamma)}(a,b;\mathcal X^*) $ and $ \D_{b-}^\gamma v \in
L^{2/(1+\gamma)}(a,b;\mathcal X) $. For the above theoretical results, we refer
the reader to \cite{Ervin2006,Luo2019}.

\subsubsection{Definitions of \texorpdfstring{$ \mathcal A $ and $ \mathcal A^* $}{}}
Assume that $ X $ and $ Y $ are two separable Hilbert spaces such that $ X $ is
continuously embedded into $ Y $ and $ X $ is dense in $ Y $. We will regard $ Y
$ as a subspace of $ X^* $, the dual space of $ X $, in the sense that
\[
  \dual{v, w}_X := (v,w)_Y \quad
  \text{ for all $ v \in Y $ and $ w \in X $}.
\]
Let $ \mathcal A $ and $ \mathcal A^* $ be two bounded linear operators from $ X
$ to $ Y $ satisfying that
\begin{subequations}
\begin{numcases}{}
  \rho(\mathcal A) \supset \Sigma_{\omega_0} \cup \{0\},
  \quad \rho(\mathcal A^*) \supset \Sigma_{\omega_0} \cup \{0\},
  \label{eq:resolvent} \\
  \nm{R(z,\mathcal A)}_{\mathcal L(Y)} \leqslant
  \frac{\mathcal M_0}{1+\snm{z}} \quad
  \forall z \in \Sigma_{\omega_0}, \label{eq:R(z,A)-Y} \\
  \nm{R(z,\mathcal A^*)}_{\mathcal L(Y)} \leqslant
  \frac{\mathcal M_0}{1+\snm{z}}
  \quad \forall z \in \Sigma_{\omega_0},
  \label{eq:R(z,A*)-Y} \\
  (\mathcal Av, w)_Y = (v, \mathcal A^* w)_Y \quad \forall v, w \in X,
  \label{eq:A-dual} \\
  c_0 \nm{v}_X \leqslant \nm{\mathcal Av}_Y \leqslant
  c_1 \nm{v}_X \quad \forall v \in X, \label{eq:A-X} \\
  c_0 \nm{v}_X \leqslant \nm{\mathcal A^*v}_Y \leqslant
  c_1 \nm{v}_X \quad \forall v \in X, \label{eq:A*-X} \\
  (\mathcal Av, v)_Y \geqslant 0 \quad \forall v \in X,
  \label{eq:A-positive}
\end{numcases}
\end{subequations}
where $ \pi/2 < \omega_0 < \pi $, $ c_0 $, $ c_1 $ and $ \mathcal M_0 $ are four
positive constants. By the transposition technique, $ \mathcal{A} $ and $
\mathcal{A^*} $ can be extended as two bounded linear operators from $ Y $ to $
X^* $ by
\begin{align}
  \dual{\mathcal{A}v, w}_X &:= (v, \mathcal A^* w)_Y, \label{eq:A-ext} \\
  \dual{\mathcal{A^*} v,w}_X &:= (v, \mathcal A w)_Y, \label{eq:A*-ext}
\end{align}
for all $ v \in Y $ and $ w \in X $.

For each $ 0 \leqslant \theta \leqslant 1 $, let $ [X^*,Y]_\theta $ and $
[X,Y]_\theta $ be the interpolation spaces defined by the famous complex
interpolation method (cf.~\cite[Chapter 2]{Lunardi2018}). We have that $
[X^*,Y]_\theta $ is the dual space of $ [X,Y]_\theta $ and vice versa
(cf.~\cite{Calderon1964}). By \eqref{eq:A-X}, \eqref{eq:A*-X} and \cite[Theorem
2.6]{Lunardi2018}, a straightforward computation gives that
\begin{equation}
  \label{eq:A}
  \nm{\mathcal A}_{
    \mathcal L( [X,Y]_\theta, [X^*,Y]_{1-\theta} )
  } \leqslant c_1 \quad \text{for all } 0 \leqslant \theta \leqslant 1.
\end{equation}
\begin{lemma} 
  \label{lem:z-A}
  Assume that $ 0 \leqslant \theta \leqslant 1 $ and $ z \in \Sigma_{\omega_0}
  $. Then
  \begin{align}
    \nm{R(z,\mathcal A)}_{\mathcal L(Y,[X,Y]_\theta)}
    \leqslant \frac{C_{c_0,\mathcal M_0}}{1+\snm{z}^\theta},
    \label{eq:A-Y-X-Y} \\
    \nm{R(z,\mathcal A^*)}_{\mathcal L(Y,[X,Y]_\theta)}
    \leqslant \frac{C_{c_0,\mathcal M_0}}{1+\snm{z}^\theta},
    \label{eq:A*-Y-X-Y} \\
    \nm{R(z,\mathcal A)}_{\mathcal L([X^*,Y]_{\theta},Y)}
    \leqslant \frac{C_{c_0,\mathcal M_0}}{1+\snm{z}^\theta},
    \label{eq:A-X*-Y-Y} \\
    \nm{R(z,\mathcal A^*)}_{\mathcal L([X^*,Y]_{\theta},Y)}
    \leqslant \frac{C_{c_0,\mathcal M_0}}{1+\snm{z}^\theta}.
    \label{eq:A*-X*-Y-Y}
  \end{align}
  Moreover, for any $ 0 \leqslant \epsilon \leqslant 1 $,
  \begin{equation}
    \label{eq:1000}
    \nm{R(z,\mathcal A)}_{
      \mathcal L( [X^*,Y]_\theta, [X,Y]_{1-(1-\epsilon)\theta} )
    } \leqslant \frac{
      C_{c_0,\mathcal M_0,\epsilon,\theta}
    }{1+\snm{z}^{\epsilon\theta}}.
  \end{equation}
\end{lemma}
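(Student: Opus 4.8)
The plan is to reduce the lemma to two elementary \emph{endpoint} bounds for the resolvents and then propagate these along the two interpolation scales, using the interpolation property of the complex method (\cite[Theorem~2.6]{Lunardi2018}) and its reiteration theorem, together with the duality $[X^*,Y]_\theta=([X,Y]_\theta)^*$ recalled just above and the elementary inequality $1+t^\beta\leqslant 2(1+t)^\beta$ for $t\geqslant0$, $0\leqslant\beta\leqslant1$, which converts bounds of the form $(1+|z|)^{-\beta}$ into bounds of the form $(1+|z|^\beta)^{-1}$. Throughout $z\in\Sigma_{\omega_0}$, hence $z,\bar z\in\rho(\mathcal A)\cap\rho(\mathcal A^*)$ by \eqref{eq:resolvent}. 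The first endpoint bound is the smoothing estimate $\nm{R(z,\mathcal A)}_{\mathcal L(Y,X)}\leqslant C_{c_0,\mathcal M_0}$ (and the same for $\mathcal A^*$): for $f\in Y$ put $v:=R(z,\mathcal A)f\in X$, so that $\mathcal A v=zv-f$; by \eqref{eq:A-X}, $c_0\nm{v}_X\leqslant\nm{\mathcal A v}_Y\leqslant|z|\,\nm{v}_Y+\nm{f}_Y$, and \eqref{eq:R(z,A)-Y} bounds $|z|\,\nm{v}_Y$ by $\mathcal M_0\nm{f}_Y$, which gives the claim; the bound for $\mathcal A^*$ is identical via \eqref{eq:A*-X} and \eqref{eq:R(z,A*)-Y}.

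The second endpoint bound is $\nm{R(z,\mathcal A)}_{\mathcal L(X^*,Y)}\leqslant C_{c_0,\mathcal M_0}$ (and the same for $\mathcal A^*$), which I would obtain by duality. Since $R(\bar z,\mathcal A^*)\in\mathcal L(Y,X)$ by the first endpoint bound, its adjoint lies in $\mathcal L(X^*,Y)$; I would check, using \eqref{eq:A-dual} and the defining relations \eqref{eq:A-ext}--\eqref{eq:A*-ext}, that this adjoint is precisely the extension of $R(z,\mathcal A)$ to $X^*$ (equivalently, that $z-\mathcal A\colon Y\to X^*$ is boundedly invertible, with inverse $R(\bar z,\mathcal A^*)^*$). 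Granting this, $\nm{R(z,\mathcal A)}_{\mathcal L(X^*,Y)}=\nm{R(\bar z,\mathcal A^*)}_{\mathcal L(Y,X)}\leqslant C_{c_0,\mathcal M_0}$, and symmetrically for $\mathcal A^*$. I expect this consistency verification --- making sure that the operator produced by transposition is the same $R(z,\mathcal A)$ that occurs in \eqref{eq:A-X*-Y-Y} and \eqref{eq:1000}, and keeping track of the conjugation $z\mapsto\bar z$ --- to be the only genuinely delicate point; everything else is elementary algebra with the identity $\mathcal A v=zv-f$ and bookkeeping of interpolation exponents.

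It then remains to interpolate. Interpolating the first endpoint bound (viewed as $\mathcal L(Y,[X,Y]_0)$, norm $\leqslant C_{c_0,\mathcal M_0}$) with \eqref{eq:R(z,A)-Y} (viewed as $\mathcal L(Y,[X,Y]_1)$, norm $\leqslant\mathcal M_0(1+|z|)^{-1}$) at parameter $\theta$ yields \eqref{eq:A-Y-X-Y}, and the analogous interpolation for $\mathcal A^*$ yields \eqref{eq:A*-Y-X-Y}. Interpolating the second endpoint bound ($\mathcal L([X^*,Y]_0,Y)$, norm $\leqslant C_{c_0,\mathcal M_0}$) with \eqref{eq:R(z,A)-Y} ($\mathcal L([X^*,Y]_1,Y)$, norm $\leqslant\mathcal M_0(1+|z|)^{-1}$) at parameter $\theta$ yields \eqref{eq:A-X*-Y-Y}, and likewise \eqref{eq:A*-X*-Y-Y}. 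Finally, for \eqref{eq:1000} I would first interpolate the second endpoint bound ($\mathcal L([X^*,Y]_0,Y)$) with the first ($\mathcal L([X^*,Y]_1,[X,Y]_0)$), both of norm $\leqslant C_{c_0,\mathcal M_0}$, to get $\nm{R(z,\mathcal A)}_{\mathcal L([X^*,Y]_\theta,[X,Y]_{1-\theta})}\leqslant C_{c_0,\mathcal M_0}$ for all $0\leqslant\theta\leqslant1$, and then interpolate this bound with \eqref{eq:A-X*-Y-Y} in the target index at parameter $\epsilon$; the reiteration theorem identifies the target space as $[X,Y]_{1-(1-\epsilon)\theta}$, and the two norms multiply to $\leqslant C_{c_0,\mathcal M_0}(1+|z|^\theta)^{-\epsilon}\leqslant 2C_{c_0,\mathcal M_0}(1+|z|^{\epsilon\theta})^{-1}$, which is \eqref{eq:1000}.
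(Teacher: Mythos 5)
Your proposal is correct and follows essentially the same route as the paper: the same two endpoint bounds, namely $\nm{R(z,\mathcal A)}_{\mathcal L(Y,X)}\leqslant(1+\mathcal M_0)/c_0$ obtained from \eqref{eq:A-X} and \eqref{eq:R(z,A)-Y}, and $\nm{R(z,\mathcal A)}_{\mathcal L(X^*,Y)}\leqslant C_{c_0,\mathcal M_0}$ obtained by transposition, followed by complex interpolation and Calder\'on reiteration (the paper proves \eqref{eq:1000} by interpolating $\mathcal L(X^*,Y)$ with $\mathcal L(Y,[X,Y]_\epsilon)$ at parameter $\theta$, while you interpolate in the target index at parameter $\epsilon$; both give the same space and the same bound). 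The only substantive difference is that the paper establishes the $\mathcal L(X^*,Y)$ endpoint by applying the Babuska--Lax--Milgram theorem to the variational problem $(v,(\overline z-\mathcal A^*)\varphi)_Y=\dual{g,\varphi}_X$, whereas you take the adjoint of $R(\overline z,\mathcal A^*)\in\mathcal L(Y,X)$ directly --- two packagings of the same computation, and the consistency check you flag does go through, since $(\overline z-\mathcal A^*)$ maps $X$ onto $Y$ and so the adjoint is indeed the inverse of the extension $z-\mathcal A\colon Y\to X^*$ defined by \eqref{eq:A-ext}.
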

\begin{proof} 
  A straightforward computation gives
  \begin{align*}
    \nm{\mathcal AR(z,\mathcal
    A)}_{\mathcal L(Y)} = \nm{zR(z,\mathcal A) - I}_{\mathcal L(Y)} \leqslant
    \snm{z} \nm{R(z,\mathcal A)}_{\mathcal L(Y)} + 1 \leqslant 1 + \mathcal M_0,
  \end{align*}
  by (\ref{eq:R(z,A)-Y}), so that (\ref{eq:A-X}) implies
  \[
    \nm{R(z,\mathcal A)}_{\mathcal L(Y,X)} \leqslant \frac{1+\mathcal M_0}{c_0}.
  \]
  By this estimate and (\ref{eq:R(z,A)-Y}), using \cite[Theorem
  2.6]{Lunardi2018} yields \cref{eq:A-Y-X-Y}. Estimate \cref{eq:A*-Y-X-Y} can be
  proved analogously.

  Then let us prove \cref{eq:A-X*-Y-Y}. We first consider the following problem:
  seek $ v \in Y $ such that
  \begin{equation}
    \label{eq:731} (v,(\overline z-\mathcal
    A^*)\varphi)_Y = \dual{g,\varphi}_X \quad \forall \varphi \in X,
  \end{equation}
  where $ g \in X^* $ is arbitrary but fixed. By \eqref{eq:A*-X} and the fact
  that $ X $ is continuously embedded into $ Y $, we conclude that $
  (\cdot,(\overline z - \mathcal A^*)\cdot)_Y $ is a continuous bilinear form on
  $ Y \times X $. Inserting $ \theta=0 $ into \cref{eq:A*-Y-X-Y} implies that,
  for any $ v \in Y \setminus \{0\} $,
  \[
    \sup_{\varphi \in X \setminus \{0\}} \frac{
      \snm{(v,(\overline z - \mathcal A^*)\varphi)_Y}
    }{\nm{\varphi}_X} \geqslant \frac{ \nm{v}_Y^2 }{
      \nm{R(\overline z,\mathcal A^*)v}_X
    } \geqslant C_{c_0,\mathcal M_0} \nm{v}_Y.
  \]
  Since $ z \in \rho(\mathcal A^*) $, it is evident that, for any $ \varphi \in
  X \setminus \{0\} $,
  \[
    \sup_{v \in Y} \snm{(v, (\overline z - \mathcal A^*) \varphi)_Y} > 0.
  \]
  Consequently, the Babuska-Lax-Milgram theorem yields that problem
  \cref{eq:731} admits a unique solution $ v \in Y $ and $ \nm{v}_Y \leqslant
  C_{c_0,\mathcal M_0} \nm{g}_{X^*} $. Since \cref{eq:A-ext,eq:731} imply $ v =
  R(z,\mathcal A) g $, this indicates that
  \[
    \nm{R(z,A)g}_Y \leqslant C_{c_0,\mathcal M_0} \nm{g}_{X^*} \quad
    \forall g \in X^*,
  \]
  and hence
  \begin{equation}
    \label{eq:R(z,A)X*}
    \nm{R(z,A)}_{\mathcal L(X^*,Y)} \leqslant C_{c_0,\mathcal M_0}.
  \end{equation}
  By \eqref{eq:R(z,A)-Y} and \eqref{eq:R(z,A)X*}, using \cite[Theorem
  2.6]{Lunardi2018} yields \cref{eq:A-X*-Y-Y}. Estimate \cref{eq:A*-X*-Y-Y} is
  derived similarly.

  Finally, let us prove \cref{eq:1000}. Inserting $ \theta = \epsilon $ and $
  \theta = 0 $ into \cref{eq:A-Y-X-Y} and \cref{eq:A-X*-Y-Y} respectively yields
  \[
    \nm{R(z,\mathcal A)}_{
      \mathcal L(Y, [X,Y]_\epsilon)
    } \leqslant \frac{C_{c_0,\mathcal M_0}}{1+\snm{z}^\epsilon},
    \quad \nm{R(z,\mathcal A)}_{\mathcal L(X^*,Y)}
    \leqslant C_{c_0,\mathcal M_0}.
  \]
  Using \cite[Theorem 2.6]{Lunardi2018} then gives
  \begin{align*}
    \nm{R(z,\mathcal A)}_{
      \mathcal L(
      [X^*,Y]_\theta,
      [Y,[X,Y]_\epsilon]_\theta
      )
    } \leqslant \frac{C_{c_0,\mathcal M_0}}{1+\snm{z}^{\epsilon\theta}}.
  \end{align*}
  Hence, by the fact that (cf.~\cite{Calderon1964})
  \[
    [Y,[X,Y]_\epsilon]_\theta =
    [X,Y]_{1-(1-\epsilon)\theta}
    \quad\text{with equivalent norms},
  \]
  we readily obtain \cref{eq:1000}. This completes the proof.
\end{proof}
\begin{remark} 
  For any $ z \in \Sigma_{\omega_0} $,
  \begin{align*}
    \nm{R(z,\mathcal A)}_{\mathcal L(X^*)} &=
    \nm{z^{-1}(z-\mathcal A + \mathcal A)R(z,\mathcal A)}_{\mathcal L(X^*)} \\
    &= \nm{I + \mathcal A R(z,\mathcal A)}_{\mathcal L(X^*)} / \snm{z} \\
    & \leqslant \frac{
      1 + \nm{\mathcal A R(z,\mathcal A)}_{\mathcal L(X^*)}
    }{\snm{z}} \\
    & \leqslant \frac{
      1 + c_1\nm{R(z,\mathcal A)}_{\mathcal L(X^*,Y)}
    }{\snm{z}} \quad\text{(by inserting $ \theta = 1 $ into \cref{eq:A})} \\
    & \leqslant \frac{ C_{c_0,c_1,\mathcal M_0} }{ \snm{z}}
    \quad\text{(by inserting $ \theta = 0 $ into \cref{eq:A-X*-Y-Y}).}
  \end{align*}
  Also, we have
  \begin{align*}
    \nm{R(z,\mathcal A)}_{\mathcal L(X^*)} &=
    \nm{\mathcal A \mathcal A^{-1} R(z,\mathcal A)}_{\mathcal L(X^*)} \\
    & \leqslant c_1 \nm{\mathcal A^{-1} R(z,\mathcal A)}_{\mathcal L(X^*,Y)}
    \quad\text{(by inserting $\theta=1$ into \cref{eq:A})} \\
    & \leqslant C_{c_1,\mathcal M_0} \nm{R(z,\mathcal A)}_{\mathcal L(X^*,Y)}
    \quad\text{(by \eqref{eq:R(z,A)-Y})} \\
    & \leqslant C_{c_0,c_1,\mathcal M_0}
    \quad\text{(by inserting $\theta=0$ into \cref{eq:A-X*-Y-Y}).}
  \end{align*}
  Consequently,
  \begin{equation}
    \label{eq:A-X*}
    \nm{R(z,\mathcal A)}_{\mathcal L(X^*)}
    \leqslant \frac{C_{c_0,c_1,\mathcal M_0}}{1+\snm{z}}
    \quad\forall z \in \Sigma_{\omega_0}.
  \end{equation}
\end{remark}


\subsubsection{Definitions of \texorpdfstring{$ E_\alpha $ and $ E_\alpha^* $}{}}
For any $ 0 < \alpha \leqslant 1 $ and $ t > 0 $, define
\begin{align}
  E_\alpha(t) &:= \frac1{2\pi i} \int_{\Gamma_{\omega_0}}
  e^{tz} R(z^\alpha,\mathcal A) \, \mathrm{d}z,
  \label{eq:E-def} \\
  E^*_\alpha(t) &:= \frac1{2\pi i}
  \int_{\Gamma_{\omega_0}} e^{t\overline z}
  R(\overline{z^\alpha},\mathcal A^*) \,
  \overline{\mathrm{d}z}.
  \label{eq:E*-def}
\end{align}
For any $ v \in X^* $ and $ w \in Y $, by the definitions of $ \mathcal A $ and
$ \mathcal A^* $ and \cref{lem:z-A} we have that
\[
  (R(z,\mathcal A)v, w)_Y =
  \dual{v,R(\overline z, \mathcal A^*)w}_X
  \quad \forall z \in \Sigma_{\omega_0},
\]
and hence from \cref{eq:E-def,eq:E*-def} we obtain
\begin{equation}
  \label{eq:E-E*}
  (E_\alpha(t)v, w)_Y = \dual{v, E_\alpha^*(t)w}_X
  \quad \forall t > 0.
\end{equation}
Furthermore, by \cref{lem:z-A}, a routine calculation
(cf.~\cite{Thomee2006,Jin2016}) yields the following lemma.
\begin{lemma}
  \label{lem:E-E*}
  Assume that $ 0 < \alpha \leqslant 1 $, $ 0 \leqslant \theta \leqslant 1 $, $
  t > 0 $, and $ G = E_\alpha $ or $ E^*_\alpha $. Then
  \begin{align} 
    \nm{G(t)}_{ \mathcal L(Y,[X,Y]_\theta) } &
    \leqslant C_{c_0,\omega_0,\mathcal M_0}
    t^{\theta\alpha-1}, \label{eq:E-X-Y} \\
    \nm{G(t)}_{ \mathcal L([X^*,Y]_\theta, Y) } &
    \leqslant C_{c_0,\omega_0,\mathcal M_0}
    t^{\theta\alpha-1}, \label{eq:E-X*-Y} \\
    \nm{G'(t)}_{\mathcal L(Y,[X,Y]_\theta)} &
    \leqslant C_{c_0,\omega_0,\mathcal M_0}
    t^{\theta\alpha-2}, \label{eq:E'-X-Y} \\
    \nm{G'(t)}_{\mathcal L([X^*,Y]_\theta,Y)} &
    \leqslant C_{c_0,\omega_0,\mathcal M_0}
    t^{\theta\alpha-2}. \label{eq:E'-X*-Y}
  \end{align}
  Moreover, for any $ 0 \leqslant \epsilon \leqslant 1 $,
  \begin{equation}
    \label{eq:201}
    E_\alpha \in C((0,\infty); \mathcal
    L([X^*,Y]_\theta,[X,Y]_{1-(1-\epsilon)\theta})
  \end{equation}
  and
  \begin{equation}
    \label{eq:AE}
    \nm{
      E_\alpha(t)
    }_{
      \mathcal L([X^*,Y]_\theta, [X,Y]_{1-(1-\epsilon)\theta})
    } \leqslant C_{c_0,\omega_0,\mathcal M_0,\epsilon,\theta} \,
    t^{\epsilon\theta\alpha-1}.
  \end{equation}
\end{lemma}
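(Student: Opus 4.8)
The plan is to establish every bound by the standard resolvent/contour technique (cf.~\cite{Thomee2006,Jin2016}). Fix $ t > 0 $ and deform $ \Gamma_{\omega_0} $ to the $ t $-dependent contour $ \Gamma^t $ consisting of the two rays $ \{re^{\pm i\omega_0}: r \geqslant 1/t\} $ together with the circular arc $ \{t^{-1}e^{i\phi}: |\phi| \leqslant \omega_0\} $, oriented as $ \Gamma_{\omega_0} $. This deformation is legitimate by Cauchy's theorem: in the bounded region enclosed between $ \Gamma_{\omega_0} $ and $ \Gamma^t $ (a neighbourhood of the origin) one has $ |\arg(z^\alpha)| \leqslant \alpha\omega_0 \leqslant \omega_0 $, hence $ z^\alpha \in \Sigma_{\omega_0} \cup \{0\} \subset \rho(\mathcal A) $ by \eqref{eq:resolvent}, so $ e^{tz}R(z^\alpha,\mathcal A) $ is analytic there; the same remark justifies using \cref{lem:z-A} on $ \Gamma^t $ after noting $ |z^\alpha| = |z|^\alpha $ and $ \overline{z^\alpha} = \bar z^\alpha $.

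For \eqref{eq:E-X-Y} with $ G = E_\alpha $, apply \eqref{eq:A-Y-X-Y} with $ z^\alpha $ in place of $ z $ to get $ \nm{R(z^\alpha,\mathcal A)}_{\mathcal L(Y,[X,Y]_\theta)} \leqslant C(1+|z|^{\alpha\theta})^{-1} $ on $ \Gamma^t $. Splitting $ \int_{\Gamma^t} $ into rays and arc and using $ |e^{tz}| = e^{tr\cos\omega_0} $ with $ \cos\omega_0 < 0 $, the ray part is bounded, after the substitution $ s = tr $, by $ C\,t^{\alpha\theta-1}\int_1^\infty s^{-\alpha\theta}e^{s\cos\omega_0}\,\mathrm{d}s $, while the arc part (where $ |z| = 1/t $, $ |e^{tz}|\leqslant e $, arc length $ \sim 2\omega_0/t $) is bounded by $ C\,t^{-1}(1+t^{-\alpha\theta})^{-1}\leqslant C\,t^{\alpha\theta-1} $; this gives \eqref{eq:E-X-Y}. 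Estimate \eqref{eq:E-X*-Y} is identical with \eqref{eq:A-X*-Y-Y} replacing \eqref{eq:A-Y-X-Y}, and \eqref{eq:AE} follows the same way from \eqref{eq:1000} (the power $ |z|^{\alpha\theta} $ becomes $ |z|^{\epsilon\theta\alpha} $). For the derivatives one differentiates under the integral sign — legitimate on $ \Gamma^t $ since the extra factor $ z $ is dominated by the exponential decay on the rays — so that $ G'(t) = (2\pi i)^{-1}\int_{\Gamma^t} z\, e^{tz} R(z^\alpha,\mathcal A)\,\mathrm{d}z $; the same splitting carries an extra power $ |z| $, producing $ t^{\alpha\theta-2} $ in place of $ t^{\alpha\theta-1} $ and yielding \eqref{eq:E'-X-Y}--\eqref{eq:E'-X*-Y}. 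The starred operators $ E^*_\alpha $ are handled verbatim using the conjugated contour in \eqref{eq:E*-def} together with the starred resolvent bounds \eqref{eq:A*-Y-X-Y} and \eqref{eq:A*-X*-Y-Y}.

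For the continuity statement \eqref{eq:201} I would revert to the fixed contour $ \Gamma_{\omega_0} $: for $ z $ on it, $ z^\alpha \in \Sigma_{\omega_0}\cup\{0\}\subset\rho(\mathcal A) $, the map $ z\mapsto R(z^\alpha,\mathcal A) $ is bounded near the origin (as $ 0\in\rho(\mathcal A) $) and decays like $ |z|^{-\epsilon\theta\alpha} $ at infinity by \eqref{eq:1000}, and for $ t $ in any compact subinterval $ [t_1,t_2]\subset(0,\infty) $ one has $ |e^{tz}|\leqslant e^{t_1 r\cos\omega_0} $ on the rays; dominated convergence then shows $ t\mapsto E_\alpha(t)\in\mathcal L([X^*,Y]_\theta,[X,Y]_{1-(1-\epsilon)\theta}) $ is continuous. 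The one point I would treat with care is that \cref{lem:z-A} is stated on the \emph{open} sector $ \Sigma_{\omega_0} $ whereas the ray of $ \Gamma_{\omega_0} $, and the values $ z^\alpha $ when $ \alpha=1 $, lie on its boundary; I would resolve this either by the elementary self-improvement of the sectorial bounds \eqref{eq:R(z,A)-Y}--\eqref{eq:R(z,A*)-Y} to a slightly wider sector, or by running all integrals over $ \Gamma_{\omega'} $ with $ \pi/2<\omega'<\omega_0 $ and invoking analyticity of $ e^{tz}R(z^\alpha,\mathcal A) $ in the sector between $ \Gamma_{\omega'} $ and $ \Gamma_{\omega_0} $ to see the value is unchanged. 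Apart from this bookkeeping, the argument is the routine one.
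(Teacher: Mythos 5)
Your proposal is correct and follows exactly the route the paper intends: the paper gives no written proof of this lemma, stating only that it follows from \cref{lem:z-A} ``by a routine calculation (cf.~\cite{Thomee2006,Jin2016})'', and your contour deformation to the $t$-dependent path, the ray/arc splitting with the substitution $s=tr$, differentiation under the integral for \cref{eq:E'-X-Y}--\cref{eq:E'-X*-Y}, and dominated convergence for \cref{eq:201} are precisely that routine calculation, with the boundary-of-sector issue for $\Gamma_{\omega_0}$ handled appropriately.
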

\begin{remark}
  By \cref{eq:A-X*} we have
  \[
    \nm{E_\alpha(t)}_{\mathcal L(X^*)}
    \leqslant C_{c_0,c_1,\omega_0,\mathcal M_0}
    t^{\alpha-1}, \quad \forall t>0.
  \]
\end{remark}

\subsubsection{Solutions of the fractional evolution equations}
Following the mild solution theory of fractional/normal evolution equations in
\cite{Pazy1983,Lubich1996,McLean2010-B,Jin2016}, we introduce the following mild
solutions. Assume that $ 0 < \alpha \leqslant 1 $ and $ 0 < T < \infty $. For
any
\[
  g \in L^1(0,T;[X^*,Y]_\theta) \quad
  \text{with}\quad 0 \leqslant \theta \leqslant 1,
\]
we call
\begin{equation}
  \label{eq:Sg-l1}
  (\mathcal S_\alpha g)(t) := \int_0^t E_\alpha(t-s) g(s) \,
  \mathrm{d}s, \quad \text{a.e.}~0 < t \leqslant T,
\end{equation}
the mild solution to the following fractional evolution equation:
\begin{equation}
  \label{eq:linear}
  (\D_{0+}^\alpha - \mathcal A) w = g,
  \quad w(0) = 0.
\end{equation}
For any $ v \in [X^*,Y]_\theta $, $ 0 \leqslant \theta \leqslant 1 $, we call
\begin{equation}
  \label{eq:Sdelta}
  (\mathcal S_\alpha(v\delta_0))(t) :=
  E_\alpha(t) v, \quad 0 < t \leqslant T,
\end{equation}
the mild solution to \cref{eq:linear} with $ g = v\delta_0 $, where $ \delta_0 $
is the Dirac measure in time concentrated at $ t=0 $. Symmetrically, for any
\[
  g \in L^1(0,T;[X^*,Y]_\theta) \quad
  \text{with}\quad 0 \leqslant \theta \leqslant 1,
\]
we call
\begin{equation}
  \label{eq:S*g}
  (\mathcal S_\alpha^*g)(t) := \int_t^T E^*_\alpha(s-t) g(s) \,
  \mathrm{d}s, \quad \text{a.e.}~0 < t < T,
\end{equation}
the mild solution to the following backward fractional evolution equation:
\begin{equation}
  \label{eq:linear*}
  (\D_{T-}^\alpha - \mathcal A^*) w = g,
  \quad w(T) = 0.
\end{equation}
For any $ v \in [X^*,Y]_\theta $, $ 0 \leqslant \theta \leqslant 1 $, we call
\begin{equation}
  \label{eq:S*delta}
  (\mathcal S_\alpha^*(v\delta_T))(t) := E^*_\alpha(T-t)
  v, \quad 0 < t \leqslant T,
\end{equation}
the mild solution to equation \cref{eq:linear*} with $ g = v\delta_T $, where $
\delta_T $ is the Dirac measure in time concentrated at $ t=T $.


\begin{lemma}
  \label{lem:Sg-dual-weakly}
  Assume that $ 0 < \alpha, \theta \leqslant 1 $ and $ q > 1/(\theta\alpha) $.
  Then
  \begin{align}
    \mathcal S_\alpha & \in \mathcal L(
    L^2(0,T;[X^*,Y]_\theta), \, L^2(0,T;Y)
    ), \label{eq:Sg-l2} \\
    \mathcal S_\alpha & \in \mathcal L(
    L^q(0,T;[X^*,Y]_\theta), \, C([0,T];Y)
    ). \label{eq:Sg-C}
  \end{align}
  Moreover, for any $ g \in L^q(0,T;[X^*,Y]_\theta) $ and $ v \in Y $,
  \begin{equation}
    \label{eq:Sdelta-dual}
    ((\mathcal S_\alpha g)(T), v)_Y = \int_0^T \Dual{
      g(t), (\mathcal S_\alpha^*(v\delta_T))(t)
    }_{X} \, \mathrm{d}t.
  \end{equation}
\end{lemma}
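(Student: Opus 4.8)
The plan is to regard $\mathcal S_\alpha$ as a causal time convolution against the operator kernel $E_\alpha$, whose size is controlled by \cref{lem:E-E*}. In particular, \cref{eq:E-X*-Y} gives $\nm{E_\alpha(t)}_{\mathcal L([X^*,Y]_\theta,Y)} \leqslant C_{c_0,\omega_0,\mathcal M_0}\, t^{\theta\alpha-1}$ for $t>0$, and since $\theta\alpha>0$ the scalar weight $t^{\theta\alpha-1}$ lies in $L^1(0,T)$. Accordingly, \cref{eq:Sg-l2,eq:Sg-C} reduce to standard convolution estimates (Young's and Hölder's inequalities applied to $t\mapsto t^{\theta\alpha-1}$) followed by a continuity argument, while \cref{eq:Sdelta-dual} is a Fubini-type identity combining the definition \cref{eq:S*delta} of $\mathcal S_\alpha^*(v\delta_T)$ with the adjoint relation \cref{eq:E-E*}.

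First, from \cref{eq:E-X*-Y} and \cref{eq:Sg-l1} one has, for a.e.\ $t \in (0,T]$, the pointwise estimate $\nm{(\mathcal S_\alpha g)(t)}_Y \leqslant C \int_0^t (t-s)^{\theta\alpha-1} \nm{g(s)}_{[X^*,Y]_\theta} \,\mathrm{d}s$. For \cref{eq:Sg-l2}, the right-hand side is the convolution of $t^{\theta\alpha-1}\in L^1(0,T)$ with $\nm{g(\cdot)}_{[X^*,Y]_\theta}\in L^2(0,T)$, so Young's inequality gives $\nm{\mathcal S_\alpha g}_{L^2(0,T;Y)} \leqslant C\nm{g}_{L^2(0,T;[X^*,Y]_\theta)}$. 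For \cref{eq:Sg-C}, let $q'$ be the conjugate exponent of $q$; the assumption $q>1/(\theta\alpha)$ is equivalent to $(1-\theta\alpha)q'<1$, so $s\mapsto(t-s)^{(\theta\alpha-1)q'}$ is integrable over $(0,t)$, and Hölder's inequality yields
\[
  \nm{(\mathcal S_\alpha g)(t)}_Y \leqslant C\, t^{\theta\alpha-1/q}\, \nm{g}_{L^q(0,T;[X^*,Y]_\theta)},
\]
a bound uniform in $t\in[0,T]$. To obtain continuity, fix $0\leqslant t_1<t_2\leqslant T$ and split
\[
  (\mathcal S_\alpha g)(t_2) - (\mathcal S_\alpha g)(t_1) = \int_{t_1}^{t_2} E_\alpha(t_2-s) g(s)\,\mathrm{d}s + \int_0^{t_1}\bigl(E_\alpha(t_2-s) - E_\alpha(t_1-s)\bigr) g(s)\,\mathrm{d}s .
\]
The first term is $O\bigl((t_2-t_1)^{\theta\alpha-1/q}\bigr)$ by the Hölder estimate just used, with $\theta\alpha-1/q>0$; the second tends to $0$ by dominated convergence, using the strong continuity of $E_\alpha(\cdot)$ on $(0,\infty)$ (see \cref{eq:201}) together with the $s$-integrable majorant $2C(t_1-s)^{\theta\alpha-1}\nm{g(s)}_{[X^*,Y]_\theta}$, where one uses $(t_2-s)^{\theta\alpha-1}\leqslant(t_1-s)^{\theta\alpha-1}$ since $\theta\alpha-1<0$ and $s<t_1<t_2$. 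As $(\mathcal S_\alpha g)(0)=0$, this proves \cref{eq:Sg-C}.

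Finally, for \cref{eq:Sdelta-dual}, fix $g\in L^q(0,T;[X^*,Y]_\theta)$ and $v\in Y$. The pointwise bound above shows that $s\mapsto E_\alpha(T-s)g(s)$ is Bochner integrable on $(0,T)$ with values in $Y$, so $(\mathcal S_\alpha g)(T)=\int_0^T E_\alpha(T-s)g(s)\,\mathrm{d}s$; since $(\cdot,v)_Y$ is a bounded linear functional on $Y$ we may bring it inside the integral and then invoke \cref{eq:E-E*}, obtaining
\[
  ((\mathcal S_\alpha g)(T), v)_Y = \int_0^T \bigl(E_\alpha(T-s)g(s), v\bigr)_Y \,\mathrm{d}s = \int_0^T \Dual{g(s), E_\alpha^*(T-s)v}_{X}\,\mathrm{d}s .
\]
By \cref{eq:S*delta}, $E_\alpha^*(T-s)v=(\mathcal S_\alpha^*(v\delta_T))(s)$, which is exactly \cref{eq:Sdelta-dual}; the last integral is absolutely convergent because, by \cref{eq:E-X-Y} and the duality between $[X,Y]_\theta$ and $[X^*,Y]_\theta$, the integrand is bounded by $C(T-s)^{\theta\alpha-1}\nm{g(s)}_{[X^*,Y]_\theta}\nm{v}_Y$, integrable by the same Hölder argument as above.

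I expect the only genuinely delicate step to be the continuity assertion in \cref{eq:Sg-C}: one has to isolate the near-diagonal part of the convolution and pass to the limit through the integrable singularity, which is what forces the careful choice of the dominating function; the $L^2$ bound, the uniform bound, and the duality identity are then routine once the kernel estimates of \cref{lem:E-E*} are in hand.
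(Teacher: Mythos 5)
Your proposal is correct and follows essentially the same route as the paper: the paper dispatches \cref{eq:Sg-l2,eq:Sg-C} to the ``routine argument'' of \cite[Theorem 2.6]{Diethelm2010} based on the kernel bound \cref{eq:E-X*-Y} (you simply write out the Young/H\"older convolution estimates and the continuity splitting explicitly), and your derivation of \cref{eq:Sdelta-dual} via Bochner integrability of $s\mapsto E_\alpha(T-s)g(s)$, moving $(\cdot,v)_Y$ inside the integral, and applying \cref{eq:E-E*,eq:S*delta} is identical to the paper's. No gaps.
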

\begin{proof} 
  By \cref{eq:E-X*-Y,eq:Sg-l1}, a routine argument (cf.~\cite[Theorem
  2.6]{Diethelm2010}) yields \cref{eq:Sg-l2,eq:Sg-C}. Note that
  \cref{eq:Sg-l1,eq:Sg-C} imply
  \[
    (\mathcal S_\alpha g)(T) =
    \int_0^T E_\alpha(T-t)g(t) \, \mathrm{d}t,
  \]
  and hence
  \begin{align*}
    ((\mathcal S_\alpha g)(T), v)_Y &=
    \left(
      \int_0^T E_\alpha(T-t) g(t) \, \mathrm{d}t, v
    \right)_Y = \int_0^T (E_\alpha(T-t) g(t) , v )_Y \, \mathrm{d}t \\
    &= \int_0^T \Dual{g(t), E_\alpha^*(T-t)v}_{X} \, \mathrm{d}t
    \quad\text{(by \cref{eq:E-E*})} \\
    &= \int_0^T \Dual{
      g(t), (\mathcal S_\alpha^*(v\delta_T))(t)
    }_{X} \, \mathrm{d}t
    \quad\text{(by \cref{eq:S*delta}).}
  \end{align*}
  This proves \cref{eq:Sdelta-dual} and hence this lemma.
\end{proof}

\begin{lemma}  
  \label{lem:S-to-1}
  For any $ 0 < \theta \leqslant 1 $,
  \begin{align}
    \lim_{\alpha \to {1-}} \nm{
      \mathcal S_\alpha - \mathcal S_1
    }_{
      \mathcal L(L^1(0,T;[X^*,Y]_\theta),
      \, L^1(0,T;Y))
    } = 0, \label{eq:S-to-1-l1} \\
    \lim_{\alpha \to {1-}} \nm{
      \mathcal S_\alpha - \mathcal S_1
    }_{
      \mathcal L(L^\infty(0,T;[X^*,Y]_\theta),
      \, C([0,T];Y))
    } = 0. \label{eq:S-to-1}
  \end{align}
\end{lemma}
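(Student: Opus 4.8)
The plan is to reduce both limits in \cref{eq:S-to-1-l1,eq:S-to-1} to the single scalar statement that
\[
  \Lambda(\alpha):=\int_0^T\nm{E_\alpha(\tau)-E_1(\tau)}_{\mathcal L([X^*,Y]_\theta,Y)}\,\mathrm d\tau\longrightarrow0\quad\text{as }\alpha\to1-,
\]
and then to prove this by a contour argument. For the reduction, note from \cref{eq:Sg-l1} that $\mathcal S_\alpha-\mathcal S_1$ is convolution in time against the operator-valued kernel $E_\alpha-E_1$; hence for $g\in L^1(0,T;[X^*,Y]_\theta)$, estimating the Bochner integral and applying Fubini's theorem gives
\[
  \nm{(\mathcal S_\alpha-\mathcal S_1)g}_{L^1(0,T;Y)}\leqslant\Lambda(\alpha)\,\nm{g}_{L^1(0,T;[X^*,Y]_\theta)},
\]
which yields \cref{eq:S-to-1-l1}; and for $g\in L^\infty(0,T;[X^*,Y]_\theta)$, \cref{eq:Sg-C} (applied with $q=\infty>1/(\theta\alpha)$) gives $\mathcal S_\alpha g,\mathcal S_1g\in C([0,T];Y)$, so $(\mathcal S_\alpha-\mathcal S_1)g\in C([0,T];Y)$, and bounding its $Y$-norm pointwise in $t$ by the same convolution estimate gives
\[
  \nm{(\mathcal S_\alpha-\mathcal S_1)g}_{C([0,T];Y)}\leqslant\Lambda(\alpha)\,\nm{g}_{L^\infty(0,T;[X^*,Y]_\theta)},
\]
which yields \cref{eq:S-to-1}.

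To estimate $\Lambda(\alpha)$, I would fix $\varepsilon>0$ and split the integral at a small $\delta\in(0,1)$. On $(0,\delta)$ I would use \cref{eq:E-X*-Y}, whose constant does not depend on $\alpha$ or $\theta$: it gives $\nm{E_\alpha(\tau)-E_1(\tau)}_{\mathcal L([X^*,Y]_\theta,Y)}\leqslant C_{c_0,\omega_0,\mathcal M_0}(\tau^{\theta\alpha-1}+\tau^{\theta-1})$, so that for $\alpha\in[1/2,1]$,
\[
  \int_0^\delta\nm{E_\alpha(\tau)-E_1(\tau)}_{\mathcal L([X^*,Y]_\theta,Y)}\,\mathrm d\tau\leqslant C_{c_0,\omega_0,\mathcal M_0}\Big(\frac{\delta^{\theta\alpha}}{\theta\alpha}+\frac{\delta^{\theta}}{\theta}\Big)\leqslant C_{c_0,\omega_0,\mathcal M_0,\theta}\,\delta^{\theta/2};
\]
choosing $\delta$ small, depending only on $\varepsilon,\theta,c_0,\omega_0,\mathcal M_0$, makes this term smaller than $\varepsilon/2$ for every $\alpha\in[1/2,1]$.

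For the remaining term $\int_\delta^T$ I would show that $E_\alpha(\tau)\to E_1(\tau)$ in $\mathcal L([X^*,Y]_\theta,Y)$ uniformly for $\tau\in[\delta,T]$. After a routine contour deformation, both $E_\alpha(\tau)$ and $E_1(\tau)$ are given by the integral in \cref{eq:E-def} with $\Gamma_{\omega_0}$ replaced by any fixed $\Gamma_\omega$, $\pi/2<\omega<\omega_0$, on which $z$ and $z^\alpha$ (for $0<\alpha\leqslant1$) both lie in $\Sigma_{\omega_0}$ whenever $z\in\Gamma_\omega\setminus\{0\}$, so that \cref{lem:z-A} applies. The resolvent identity $R(z^\alpha,\mathcal A)-R(z,\mathcal A)=(z-z^\alpha)R(z^\alpha,\mathcal A)R(z,\mathcal A)$ combined with \cref{eq:R(z,A)-Y,eq:A-X*-Y-Y} gives
\[
  \nm{R(z^\alpha,\mathcal A)-R(z,\mathcal A)}_{\mathcal L([X^*,Y]_\theta,Y)}\leqslant\frac{C_{c_0,\mathcal M_0}\,\snm{z-z^\alpha}}{(1+\snm{z}^\alpha)(1+\snm{z}^\theta)}\leqslant C_{c_0,\mathcal M_0}\,\snm{z-z^\alpha},
\]
and since $\snm{e^{\tau z}}=e^{-\tau\snm{z}\snm{\cos\omega}}\leqslant e^{-\delta\snm{z}\snm{\cos\omega}}$ on $\Gamma_\omega$ for $\tau\geqslant\delta$, it follows that
\[
  \sup_{\tau\in[\delta,T]}\nm{E_\alpha(\tau)-E_1(\tau)}_{\mathcal L([X^*,Y]_\theta,Y)}\leqslant\frac1{2\pi}\int_{\Gamma_\omega}e^{-\delta\snm{z}\snm{\cos\omega}}\nm{R(z^\alpha,\mathcal A)-R(z,\mathcal A)}_{\mathcal L([X^*,Y]_\theta,Y)}\,\snm{\mathrm dz}.
\]
For each fixed $z\in\Gamma_\omega\setminus\{0\}$ one has $z^\alpha\to z$ as $\alpha\to1-$, so the integrand on the right tends to $0$ pointwise; it is moreover dominated, uniformly in $\alpha\in[1/2,1]$, by $C_{c_0,\mathcal M_0}\,e^{-\delta\snm{z}\snm{\cos\omega}}(1+2\snm{z})$, which is integrable over $\Gamma_\omega$. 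The dominated convergence theorem then yields $\sup_{\tau\in[\delta,T]}\nm{E_\alpha(\tau)-E_1(\tau)}_{\mathcal L([X^*,Y]_\theta,Y)}\to0$, hence $\int_\delta^T\nm{E_\alpha(\tau)-E_1(\tau)}_{\mathcal L([X^*,Y]_\theta,Y)}\,\mathrm d\tau<\varepsilon/2$ once $\alpha$ is close enough to $1$; together with the previous step this gives $\Lambda(\alpha)<\varepsilon$ for such $\alpha$, completing the argument.

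The step I expect to be the main obstacle is the $\int_\delta^T$ estimate: one must distill from the resolvent identity and the decay bounds of \cref{lem:z-A} a single $\alpha$-independent dominating function on the unbounded contour $\Gamma_\omega$; note that $\snm{z-z^\alpha}$ does \emph{not} tend to $0$ uniformly in $z$ (it grows like $\snm{z}$ for $\snm{z}$ large), so the integrand, including the exponential factor $\snm{e^{\tau z}}$, has to be controlled as a whole. By contrast, the $\int_0^\delta$ estimate is routine given \cref{lem:E-E*}, and the reduction to $\Lambda(\alpha)$ is a standard convolution / Young-type argument.
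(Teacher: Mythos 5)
Your proof is correct and follows essentially the same route as the paper: reduce both operator-norm limits to $\nm{E_\alpha-E_1}_{L^1(0,T;\mathcal L([X^*,Y]_\theta,Y))}\to 0$ via the convolution (Young) estimate, then drive that quantity to zero using the resolvent identity $R(z^\alpha,\mathcal A)-R(z,\mathcal A)=(z-z^\alpha)R(z^\alpha,\mathcal A)R(z,\mathcal A)$ together with the bounds of \cref{lem:z-A} and dominated convergence on the contour. The only organizational difference is that the paper first integrates $e^{t r\cos\omega_0}$ over $t\in(0,T)$, which yields an extra factor $1/r$ and permits a single application of dominated convergence to the resulting frequency integral, thereby avoiding your $\delta$-splitting of the time interval; both versions are valid.
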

\begin{proof} 
  Since
  \begin{align*}
    R(z,\mathcal A) - R(z^\alpha,\mathcal A) =
    (z^\alpha-z) R(z,\mathcal A) R(z^\alpha,\mathcal A)
    \quad\text{ for all } z \in \Sigma_{\omega_0},
  \end{align*}
  a straightforward calculation gives, by \cref{eq:E-def,lem:z-A}, that
  \begin{align*}
    \nm{ (E_\alpha - E_1)(t) }_{\mathcal L([X^*,Y]_\theta,Y)}
    \leqslant C_{c_0,\mathcal M_0} \int_0^\infty
    e^{t\cos\omega_0 r} \frac{
      \snm{
        re^{i\omega_0} - (re^{i\omega_0})^\alpha
      }
    }{
      (1+r)(1+r^{\theta\alpha})
    } \, \mathrm{d}r
  \end{align*}
  for all $ t > 0 $. It follows that
  \[
    \nm{
      E_\alpha - E_1
    }_{L^1(0,T;\mathcal L([X^*,Y]_\theta,Y))}
    \leqslant{}
    C_{c_0,\omega_0,\mathcal M_0}
    \int_0^\infty
    \frac{
      \snm{
        re^{i\omega_0} - (re^{i\omega_0})^\alpha
      }
    }{
      r(1+r)(1+r^{\theta\alpha})
    } \, \mathrm{d}r.
  \]
  Then Lebesgue's dominated convergence theorem yields
  \begin{equation}
    \label{eq:lbj}
    \lim_{\alpha \to {1-}} \nm{
      E_\alpha - E_1
    }_{L^1(0,T;\mathcal L([X^*,Y]_\theta,Y))} = 0.
  \end{equation}
  Since Young's inequality implies
  \[
    \nm{\mathcal S_\alpha - \mathcal S_1}_{
      \mathcal L(L^1(0,T;[X^*,Y]_\theta), L^1(0,T;Y)
    } \leqslant
    \nm{\mathcal E_\alpha - E_1}_{
      L^1(0,T;\mathcal L([X^*,Y]_\theta))
    },
  \]
  by \cref{eq:lbj} we readily obtain \cref{eq:S-to-1-l1}. Moreover, by
  \cref{eq:Sg-l1,eq:Sg-C} we have
  \[
    \nm{
      \mathcal S_\alpha - \mathcal S_1
    }_{
      \mathcal L(L^\infty(0,T;[X^*,Y]_\theta,Y), C([0,T];Y))
    } \leqslant
    \nm{
      E_\alpha - E_1
    }_{L^1(0,T;\mathcal L([X^*,Y]_\theta,Y))},
  \]
  so that \cref{eq:lbj} proves \cref{eq:S-to-1}. This completes the proof.
\end{proof}

\begin{lemma}
  \label{lem:Sg-regu}
  Assume that $ 0 < \alpha,\theta \leqslant 1 $. Then for any $ g \in
  C([0,T];[X^*,Y]_\theta) $ we have
  \begin{equation}
    \label{eq:Sg-strong}
    (\D_{0+}^\alpha - \mathcal A) \mathcal S_\alpha g = g
  \end{equation}
  and
  \begin{equation}
    \label{eq:Sg-regu}
    \begin{aligned}
      & \nm{\D_{0+}^\alpha \mathcal S_\alpha g}_{
        C([0,T];[X^*,Y]_{(1-\epsilon)\theta})
      } + \nm{
        \mathcal A \mathcal S_\alpha g
      }_{
        C([0,T];[X^*,Y]_{(1-\epsilon)\theta})
      } \\
      \leqslant{} &
      c \nm{g}_{C([0,T];[X^*,Y]_\theta)},
    \end{aligned}
  \end{equation}
  where $ 0 < \epsilon < 1 $ and $ c $ is a positive constant independent of $ g
  $.
\end{lemma}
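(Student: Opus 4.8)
The plan is to treat the lemma in three steps: (i) prove the smoothing bound for $\mathcal A\mathcal S_\alpha g$; (ii) deduce the identity \eqref{eq:Sg-strong}; (iii) read off the bound for $\D_{0+}^\alpha\mathcal S_\alpha g$ from \eqref{eq:Sg-strong}. Throughout, fix $0<\epsilon<1$ and abbreviate $Z:=[X^*,Y]_{(1-\epsilon)\theta}$ and $W:=[X,Y]_{1-(1-\epsilon)\theta}$. The two ingredients for step (i) are the smoothing estimate \eqref{eq:AE}, which reads $\nm{E_\alpha(\sigma)}_{\mathcal L([X^*,Y]_\theta,W)}\leqslant C\sigma^{\epsilon\theta\alpha-1}$, and the mapping property \eqref{eq:A} with the index there taken to be $1-(1-\epsilon)\theta$, which gives $\mathcal A\in\mathcal L(W,Z)$ with $\nm{\mathcal A}_{\mathcal L(W,Z)}\leqslant c_1$. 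Since $\epsilon\theta\alpha>0$ the kernel $\sigma\mapsto\sigma^{\epsilon\theta\alpha-1}$ is integrable on $(0,T)$, so for $g\in C([0,T];[X^*,Y]_\theta)$ the Bochner integral $(\mathcal S_\alpha g)(t)=\int_0^t E_\alpha(\sigma)g(t-\sigma)\,\mathrm{d}\sigma$ converges in $W$; hence $(\mathcal S_\alpha g)(t)\in W$ and, $\mathcal A$ being bounded and therefore commuting with Bochner integration, $(\mathcal A\mathcal S_\alpha g)(t)=\int_0^t\mathcal A E_\alpha(\sigma)g(t-\sigma)\,\mathrm{d}\sigma$ in $Z$, with
\[
  \nm{(\mathcal A\mathcal S_\alpha g)(t)}_Z
  \leqslant c_1 C\Big(\int_0^T\sigma^{\epsilon\theta\alpha-1}\,\mathrm{d}\sigma\Big)
  \nm{g}_{C([0,T];[X^*,Y]_\theta)}
\]
uniformly in $t\in[0,T]$. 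Strong measurability of the integrand is guaranteed by the continuity \eqref{eq:201}.

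The continuity of $t\mapsto(\mathcal A\mathcal S_\alpha g)(t)$ in $Z$ is then easy, because the substitution $\sigma=t-s$ has pinned the singularity of $E_\alpha$ at the fixed point $\sigma=0$: for $0\leqslant t<t'\leqslant T$ one writes $(\mathcal A\mathcal S_\alpha g)(t')-(\mathcal A\mathcal S_\alpha g)(t)$ as $\int_0^t\mathcal A E_\alpha(\sigma)\big[g(t'-\sigma)-g(t-\sigma)\big]\,\mathrm{d}\sigma+\int_t^{t'}\mathcal A E_\alpha(\sigma)g(t'-\sigma)\,\mathrm{d}\sigma$; the $Z$-norm of the first summand is at most $c_1 C\big(\int_0^T\sigma^{\epsilon\theta\alpha-1}\,\mathrm{d}\sigma\big)$ times the modulus of continuity of $g$ evaluated at $t'-t$, and that of the second is at most $\tfrac{c_1 C}{\epsilon\theta\alpha}\nm{g}_{C([0,T];[X^*,Y]_\theta)}\big((t')^{\epsilon\theta\alpha}-t^{\epsilon\theta\alpha}\big)$, both of which vanish as $t'\to t$; taking $t=0$ also yields $(\mathcal A\mathcal S_\alpha g)(0)=0$. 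Hence $\mathcal A\mathcal S_\alpha g\in C([0,T];Z)$ and $\nm{\mathcal A\mathcal S_\alpha g}_{C([0,T];Z)}\leqslant c\,\nm{g}_{C([0,T];[X^*,Y]_\theta)}$.

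For step (ii) I would argue by density. When $g\in C^1([0,T];X)$ the mild solution $\mathcal S_\alpha g$ is a classical solution of \eqref{eq:linear}, i.e.\ $(\D_{0+}^\alpha-\mathcal A)\mathcal S_\alpha g=g$; this is the classical regularity theory for mild solutions of fractional/normal evolution equations (cf.\ \cite{Pazy1983,Lubich1996,McLean2010-B,Jin2016}). Given $g\in C([0,T];[X^*,Y]_\theta)$, choose $g_n\in C^1([0,T];X)$ with $g_n\to g$ in $C([0,T];[X^*,Y]_\theta)$, which is possible because $X$ is dense in $[X^*,Y]_\theta$. Then \eqref{eq:Sg-C} gives $\mathcal S_\alpha g_n\to\mathcal S_\alpha g$ in $C([0,T];Y)$, hence in $C([0,T];Z)$ and in the sense of $Z$-valued distributions on $(0,T)$, so $\D_{0+}^\alpha\mathcal S_\alpha g_n\to\D_{0+}^\alpha\mathcal S_\alpha g$ in that distributional sense (the Riemann--Liouville operator $\D_{0+}^\alpha$ being a bounded convolution followed by distributional differentiation); by step (i), $\mathcal A\mathcal S_\alpha g_n\to\mathcal A\mathcal S_\alpha g$ and trivially $g_n\to g$ in $C([0,T];Z)$. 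Passing to the limit in $\D_{0+}^\alpha\mathcal S_\alpha g_n=g_n+\mathcal A\mathcal S_\alpha g_n$ yields \eqref{eq:Sg-strong}, and since the right-hand side lies in $C([0,T];Z)$ the identity holds there. (A direct derivation is also available: Fubini's theorem gives $\D_{0+}^{\alpha-1}\mathcal S_\alpha g=(\D_{0+}^{\alpha-1}E_\alpha)*g$, and a contour computation --- deforming $\Gamma_{\omega_0}$ around the origin, legitimate since $0\in\rho(\mathcal A)$, and using $\tfrac1{2\pi i}\int_{\Gamma_{\omega_0}}z^{-1}e^{tz}\,\mathrm{d}z=1$ for $t>0$ --- shows $\tfrac{\mathrm{d}}{\mathrm{d}t}\D_{0+}^{\alpha-1}E_\alpha(t)=\mathcal A E_\alpha(t)$ for $t>0$ and $\D_{0+}^{\alpha-1}E_\alpha(t)\to I$ as $t\to0+$, whence the claim.)

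Finally, since $(1-\epsilon)\theta\leqslant\theta$ and the interpolation spaces are nested, $C([0,T];[X^*,Y]_\theta)\hookrightarrow C([0,T];Z)$; combining this with \eqref{eq:Sg-strong} and step (i) gives
\[
  \nm{\D_{0+}^\alpha\mathcal S_\alpha g}_{C([0,T];Z)}
  =\nm{g+\mathcal A\mathcal S_\alpha g}_{C([0,T];Z)}
  \leqslant c\,\nm{g}_{C([0,T];[X^*,Y]_\theta)},
\]
which, together with the bound on $\mathcal A\mathcal S_\alpha g$ from step (i), is precisely \eqref{eq:Sg-regu}. I expect the only genuinely delicate point to be the $t$-continuity of $\mathcal A\mathcal S_\alpha g$ near $t=0$, which the substitution $\sigma=t-s$ takes care of; if one insists on the direct derivation of \eqref{eq:Sg-strong} rather than the density route, the main technical burden shifts to justifying the interchange of the fractional integral with the Bochner contour integral and the deformation of $\Gamma_{\omega_0}$ off the origin.
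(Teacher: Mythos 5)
Your proposal is correct, and your steps (i) and (iii) --- the bound on $\mathcal A\mathcal S_\alpha g$ obtained from \cref{eq:AE} together with \cref{eq:A} at the index $1-(1-\epsilon)\theta$, followed by reading off the bound on $\D_{0+}^\alpha\mathcal S_\alpha g$ from the equation and the embedding $[X^*,Y]_\theta\hookrightarrow[X^*,Y]_{(1-\epsilon)\theta}$ --- coincide with Step~3 of the paper's proof. Where you genuinely diverge is in establishing \cref{eq:Sg-strong}. The paper argues directly: it introduces the kernel $\eta(t)=\frac1{2\pi i}\int_{\Gamma_{\omega_0}}e^{tz}z^{\alpha-1}R(z^\alpha,\mathcal A)\,\mathrm{d}z$, identifies $\D_{0+}^{\alpha-1}\mathcal S_\alpha g=\eta*g$ via the Laplace transform, and differentiates the convolution using $\eta(0)=I$ and $\eta'=\mathcal AE_\alpha$ (obtained by deforming the contour around the origin) --- which is exactly the computation you relegate to your parenthetical. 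Your primary route instead approximates $g$ by $g_n\in C^1([0,T];X)$ (legitimate, since $X$ is dense in $[X^*,Y]_\theta$), invokes classical solvability for smooth data, and passes to the limit using your step (i), \cref{eq:Sg-C}, and the continuity of $\D_{0+}^\alpha$ on $Z$-valued distributions. Both routes are sound at the level of rigor the paper itself adopts (its proof is explicitly a sketch citing standard theory): the density argument is cleaner to state but shifts the substantive work onto the classical solvability for $C^1([0,T];X)$ data, which in the cited references is itself established by essentially the paper's kernel computation, whereas the direct route is self-contained given the resolvent estimates of \cref{lem:z-A} but requires justifying the contour deformation and the interchange of $\D_{0+}^{\alpha-1}$ with the contour integral, as you note. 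A minor point in your favor: your explicit verification of the $t$-continuity of $\mathcal A\mathcal S_\alpha g$ up to $t=0$ is more careful than the paper's ``routine calculation''.
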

\begin{proof}
  Since a complete rigorous proof of this lemma is tedious but standard
  (cf.~\cite{Pazy1983}), we only present briefly the key ingredients of the
  proof.

  {\it Step 1.} Define
  \begin{equation}
    \label{eq:eta}
    \eta(t) := \frac1{2\pi  i} \int_{\Gamma_{\omega_0}}
    e^{tz} z^{\alpha-1} R(z^\alpha,\mathcal A) \, \mathrm{d}z,
    \quad t > 0.
  \end{equation}
  A straightforward computation gives that, for any $ 0 < t \leqslant T $,
  \begin{align*}
    \eta(t) &= \frac1{2\pi i} \int_{\Gamma_{\omega_0}}
    e^{tz} z^{-1}(z^\alpha-\mathcal A + \mathcal A)
    R(z^\alpha,\mathcal A) \, \mathrm{d}z \\
    &= \frac1{2\pi i} \int_{\Gamma_{\omega_0}}
    e^{tz} z^{-1} I \, \mathrm{d}z +
    \frac1{2\pi i} \int_{\Gamma_{\omega_0}}
    e^{tz} z^{-1} \mathcal A
    R(z^\alpha,\mathcal A) \, \mathrm{d}z \\
    &= I + \frac1{2\pi i} \mathcal A
    \int_{\Gamma_{\omega_0}} e^{tz} z^{-1} R(z^\alpha,\mathcal A) \,
    \mathrm{d}z,
  \end{align*}
  where $ \Gamma_{\omega_0} $ is deformed so that the origin is to its left.
  Hence, we conclude from \cref{eq:A,lem:z-A} the following properties:
  \begin{align*}
    & \eta \in C([0,T];\mathcal L([X^*,Y]_\theta,X^*))
    \cap C^1((0,T];\mathcal L([X^*,Y]_\theta,X^*)); \\
    & \eta(0) = I; \\
    & \eta'(t) = \frac1{2\pi i} \mathcal A
    \int_{\Gamma_{\omega_0}} e^{tz} R(z^\alpha,\mathcal A) \, \mathrm{d}z,
    \quad t > 0; \\
    & \nm{\eta'(t)}_{\mathcal L([X^*,Y]_\theta,X^*)}
    \leqslant C_{c_0,c_1,\omega_0,\mathcal M_0}
    t^{\theta\alpha-1}, \quad t > 0.
  \end{align*}

  {\it Step 2.} By the theory of Laplace transform, from \cref{eq:Sg-l1} we
  obtain that
  \begin{equation}
    \label{eq:426}
    (\D_{0+}^{\alpha-1} \mathcal S_\alpha g)(t) =
    \int_0^t \eta(t-s) g(s) \, \mathrm{d}s,
    \quad 0 < t \leqslant T.
  \end{equation}
  Hence, by the properties of $ \eta $ presented in Step 1,
  \begin{align*}
    & (\D_{0+}^\alpha \mathcal S_\alpha g)(t) =
    \frac{\mathrm{d}}{\mathrm{d}t}
    (\D_{0+}^{\alpha-1} \mathcal S_\alpha g)(t) =
    \frac{\mathrm{d}}{\mathrm{d}t}
    \int_0^t \eta(t-s) g(s) \, \mathrm{d}s \\
    ={} & g(t) + \int_0^t \eta'(t-s) g(s) \, \mathrm{d}s \\
    ={} & g(t) + \mathcal A \int_0^t \frac1{2\pi i}
    \int_{\Gamma_{\omega_0}} e^{(t-s)z}
    R(z^\alpha,\mathcal A) \, \mathrm{d}z \, g(s) \, \mathrm{d}s \\
    ={} &
    g(t) + \mathcal A \int_0^t E_\alpha(t-s) g(s) \, \mathrm{d}s
    \quad\text{(by \cref{eq:E-def})} \\
    ={} &
    g(t) + \mathcal A (\mathcal S_\alpha g)(t)
    \quad\text{(by \cref{eq:Sg-l1})}
  \end{align*}
  for each $ 0 \leqslant t \leqslant T $. This implies equality
  \cref{eq:Sg-strong}.

  {\it Step 3.} For convenience, we will use $ c $ to denote a positive
  constant, whose value is independent of $ g $ but may differ at each
  occurrence. A routine calculation gives, by \cref{eq:Sg-l1,eq:201,eq:AE}, that
  \[
    \nm{\mathcal S_\alpha g}_{
      C([0,T];[X,Y]_{1-(1-\epsilon)\theta})
    } \leqslant c \nm{g}_{C([0,T];[X^*,Y]_\theta)},
  \]
  so that \cref{eq:A} implies
  \begin{equation}
    \label{eq:700}
    \nm{
      \mathcal A \mathcal S_\alpha g
    }_{
      C([0,T];[X^*,Y]_{(1-\epsilon)\theta})
    } \leqslant c \nm{g}_{C([0,T];[X^*,Y]_\theta)}.
  \end{equation}
  Since $ [X^*,Y]_\theta $ is continuously embedded into $
  [X^*,Y]_{(1-\epsilon)\theta} $, we have
  \begin{equation}
    \label{eq:701}
    \nm{g}_{C([0,T];[X^*,Y]_{(1-\epsilon)\theta})}
    \leqslant c \nm{g}_{C([0,T];[X^*,Y]_\theta)}.
  \end{equation}
  Combining \cref{eq:Sg-strong,eq:700,eq:701} proves \cref{eq:Sg-regu} and thus
  concludes the proof.
\end{proof}

\subsection{Continuous problem}
Let $ Z $ be a Hilbert space and let $ U_\text{ad} \subset L^\infty(0,T;Z) $ be
a convex, bounded and closed subset of $ L^2(0,T;Z) $. We consider the following
abstract optimal control problem:
\begin{equation}
  \label{eq:abs-optim}
  \min\limits_{u \in
  U_{\text{ad}}} J_\alpha(u) :=
  \frac12 \nm{(\mathcal S_\alpha\mathcal R_{\theta_0}u)(T) - y_d}_Y^2 + \frac\nu2
  \nm{u}_{L^2(0,T;Z)}^2,
\end{equation}
where $ 0 < \alpha \leqslant 1 $, $ y_d \in Y $, $ \nu > 0 $ is a regularization
parameter, and $ \mathcal R_{\theta_0}: Z \to [X^*,Y]_{\theta_0} $ is a bounded
linear operator for some $ 0 < \theta_0 \leqslant 1 $.

Define $ \mathcal R_{\theta_0}^*: [X,Y]_{\theta_0} \to Z $ by
\[
  (\mathcal R_{\theta_0}^* v, w)_Z :=
  \overline{
    \dual{\mathcal R_{\theta_0}w,v}_{[X,Y]_{\theta_0}}
  }
\]
for all $ v \in [X,Y]_{\theta_0} $ and $ w \in Z $. Assume that $ q >
\max\{1/(\theta_0\alpha),2\} $. By \cref{eq:Sg-C}, $ (\mathcal S_\alpha\mathcal
R_{\theta_0}\cdot)(T) $ is a bounded linear operator from $ L^q(0,T;Z) $ to $ Y
$. Clearly, $ J_\alpha $ in \cref{eq:abs-optim} is a strictly convex functional
on $ L^q(0,T;Z) $, and $ U_\text{ad} $ is a convex, bounded and closed subset of
$ L^q(0,T;Z) $. By \cref{eq:Sdelta-dual}, a routine argument (cf.~\cite[Theorems
2.14 and 2.21]{Troltzsh2010}) yields the following theorem.
\begin{theorem}
  \label{thm:basic-regu}
  Problem \cref{eq:abs-optim} admits a unique solution $ u \in U_\text{ad} $,
  and the following first-order optimality condition holds:
  \begin{subequations}
  \begin{numcases}{}
    y = \mathcal S_\alpha\mathcal R_{\theta_0}u, \label{eq:optim-y} \\
    p = \mathcal S_\alpha^*\big( (y(T)-y_d)\delta_T \big), \label{eq:optim-p} \\
    \int_0^T \big(
      \mathcal R_{\theta_0}^*p(t) + \nu u(t), \, v(t) - u(t)
    \big)_Z \, \mathrm{d}t \geqslant 0
    \text{ for all } v \in U_\text{ad}.
    \label{eq:optim-u}
  \end{numcases}
  \end{subequations}
\end{theorem}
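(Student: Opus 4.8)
The plan is to combine the direct method of the calculus of variations with the classical variational characterization of a minimizer over a convex set, and then to recast the resulting gradient condition into the stated form by means of the duality identity \cref{eq:Sdelta-dual}. Throughout I would work in the reflexive space $ L^q(0,T;Z) $ with $ q > \max\{1/(\theta_0\alpha),2\} $ fixed.

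\textbf{Existence and uniqueness.} By \cref{eq:Sg-C} the map $ u \mapsto (\mathcal S_\alpha\mathcal R_{\theta_0}u)(T) $ is bounded and linear from $ L^q(0,T;Z) $ into $ Y $, so $ J_\alpha $ is a well-defined, nonnegative, strictly convex, strongly continuous functional on $ L^q(0,T;Z) $; being convex and continuous, it is weakly lower semicontinuous. Since (as recorded just before the statement) $ U_\text{ad} $ is convex, bounded and closed in $ L^q(0,T;Z) $, it is weakly sequentially compact. Taking a minimizing sequence in $ U_\text{ad} $, extracting a weakly convergent subsequence with limit $ u \in U_\text{ad} $, and passing to the lower limit produces a minimizer, and strict convexity of $ J_\alpha $ makes it unique. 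This is the routine argument alluded to in \cite[Theorems 2.14 and 2.21]{Troltzsh2010}.

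\textbf{First-order condition and the adjoint state.} For $ u,v \in U_\text{ad} $ and $ t \in [0,1] $, convexity gives $ u + t(v-u) \in U_\text{ad} $; differentiating $ t \mapsto J_\alpha(u+t(v-u)) $ at $ t = 0^+ $ shows that the derivative equals
\[
  \Big(
    (\mathcal S_\alpha\mathcal R_{\theta_0}u)(T) - y_d,\
    (\mathcal S_\alpha\mathcal R_{\theta_0}(v-u))(T)
  \Big)_Y + \nu\,(u,\,v-u)_{L^2(0,T;Z)},
\]
and, $ u $ being a minimizer over the convex set $ U_\text{ad} $, this is nonnegative for every $ v \in U_\text{ad} $. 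I would then set $ y := \mathcal S_\alpha\mathcal R_{\theta_0}u $ and $ p := \mathcal S_\alpha^*\big((y(T)-y_d)\delta_T\big) $, so that \cref{eq:optim-y,eq:optim-p} hold by definition; here $ y(T)-y_d \in Y $ by \cref{eq:Sg-C}, and $ p(t) = E_\alpha^*(T-t)(y(T)-y_d) \in [X,Y]_{\theta_0} $ for a.e.~$ t $ by \cref{eq:S*delta,eq:E-X-Y}. Applying \cref{eq:Sdelta-dual} with $ g = \mathcal R_{\theta_0}(v-u) \in L^q(0,T;[X^*,Y]_{\theta_0}) $ and the vector $ y(T)-y_d $ gives
\[
  \Big(
    (\mathcal S_\alpha\mathcal R_{\theta_0}(v-u))(T),\ y(T)-y_d
  \Big)_Y
  = \int_0^T \Dual{
    \mathcal R_{\theta_0}\big(v(t)-u(t)\big),\ p(t)
  }_{[X,Y]_{\theta_0}} \, \mathrm{d}t,
\]
and the definition of $ \mathcal R_{\theta_0}^* $ identifies the integrand with $ \overline{(\mathcal R_{\theta_0}^*p(t),\,v(t)-u(t))_Z} $; substituting this into the variational inequality above (reading it in terms of real parts, as is implicit for a real-valued functional) yields \cref{eq:optim-u}.

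\textbf{Main obstacle.} There is no genuine difficulty; the only point needing care is the topological mismatch — $ J_\alpha $ is continuous only on $ L^q $, whereas $ U_\text{ad} $ is declared as a subset of $ L^2 \cap L^\infty $ — which is resolved by the observation, already present in the text, that $ U_\text{ad} $ is simultaneously convex, bounded and closed in the reflexive space $ L^q(0,T;Z) $, so the direct method applies there verbatim. A secondary bookkeeping task is tracking complex conjugates and real parts when transporting the $ Y $-inner product through \cref{eq:Sdelta-dual} and the definition of $ \mathcal R_{\theta_0}^* $ down to the $ Z $-inner product appearing in \cref{eq:optim-u}.
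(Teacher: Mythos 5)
Your proposal is correct and follows exactly the route the paper itself takes (and merely sketches): existence and uniqueness via strict convexity and the weak compactness of the convex, bounded, closed set $U_\text{ad}$ in the reflexive space $L^q(0,T;Z)$ with $q>\max\{1/(\theta_0\alpha),2\}$, the standard variational inequality for a minimizer over a convex set as in \cite[Theorems 2.14 and 2.21]{Troltzsh2010}, and the identification of the adjoint state through the duality identity \cref{eq:Sdelta-dual} together with the definition of $\mathcal R_{\theta_0}^*$. The only cosmetic remark is that the duality pairing in \cref{eq:Sdelta-dual} is written as $\dual{\cdot,\cdot}_X$ rather than $\dual{\cdot,\cdot}_{[X,Y]_{\theta_0}}$, but these agree on the relevant elements, so nothing substantive changes.
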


\subsection{Temporally discrete problem}
\label{ssec:discrete}
Let $ J > 1 $ be an integer and define $ t_j := j\tau $ for each $ j=0, 1, 2,
\dots, J $, where $ \tau := T/J $. For each Banach space $ \mathcal X $, define
\[ 
  W_\tau(\mathcal X) := \{
    V \in L^\infty(0,T; \mathcal X):\, V
    \text{ is constant on } (t_{j-1},t_j) \quad \forall 1 \leqslant j \leqslant J
  \}.
\]
For any $ 0 < \alpha < 1 $ and $ g \in L^1(0,T;X^*) $, define $ \mathcal
S_{\alpha,\tau}g \in W_{\tau}(Y) $ and $ \mathcal S_{\alpha,\tau}^*g \in
W_{\tau}(Y) $, respectively, by that
\begin{align}
  \int_0^T \dual{
    (\D_{0+}^\alpha - \mathcal A) \mathcal S_{\alpha,\tau}g, V
  }_X \, \mathrm{d}t =
  \int_0^T \dual{g, V}_X \, \mathrm{d}t,
  \label{eq:Stau} \\
  \int_0^T \dual{
    (\D_{T-}^\alpha -\mathcal A^*) \mathcal S_{\alpha,\tau}^*g, V
  }_X \, \mathrm{d}t =
  \int_0^T \dual{g, V}_X \, \mathrm{d}t,
  \label{eq:S*tau}
\end{align}
for all $ V \in W_\tau(X) $. For any $ g \in L^1(0,T;X^*) $, define $ \mathcal
S_{1,\tau} g \in W_\tau(Y) $ and $ \mathcal S_{1,\tau}^* g \in W_\tau(Y) $,
respectively, by that
\begin{align}
  & \big(
    (\mathcal S_{1,\tau}g)(0+),
    V(0+)
  \big)_Y + \sum_{j=1}^{J-1} \big(
    (\mathcal S_{1,\tau} g)(t_j+) - (\mathcal S_{1,\tau} g)(t_j-),
    V(t_j+)
  \big)_Y \notag \label{eq:Stau-1} \\
  & \qquad {} - \int_0^T \dual{
    \mathcal A \mathcal S_{1,\tau} g, V
  }_X \, \mathrm{d}t =
  \int_0^T \dual{g, V}_X \, \mathrm{d}t, \\
  & \big(
    (\mathcal S_{1,\tau}^*g)(T-),
    V(T-)
  \big)_Y + \sum_{j=1}^{J-1} \big(
    (\mathcal S_{1,\tau}^* g)(t_j-) - (\mathcal S_{1,\tau}^* g)(t_j+),
    V(t_j-)
  \big)_Y \notag \\
  & \qquad {} - \int_0^T \dual{
    \mathcal A^* \mathcal S_{1,\tau}^* g, V
  }_X \, \mathrm{d}t =
  \int_0^T \dual{g, V}_X \, \mathrm{d}t,
  \label{eq:S*tau-1}
\end{align}
for all $ V \in W_\tau(X) $. We will present some properties of $ \mathcal
S_{\alpha,\tau} $, $ 0 < \alpha \leqslant 1 $, in \cref{ssec:Stau}.
\begin{remark}
  Scheme \cref{eq:Stau-1} is a famous discontinuous Galerkin method for
  parabolic equations (cf.~\cite{Eriksson1985}), and this scheme is a variant of
  the backward Euler difference scheme.
\end{remark}


\begin{remark} 
  We note that the idea of using the Galerkin methods to discretize the time
  fractional calculus operators was firstly developed by McLean and Mustapha
  \cite{Mclean2009Convergence,Mustapha2009Discontinuous,Mustapha2011Piecewise,Mustapha2014A}.
  The L1 scheme \cite{Lin2007,Sun2006} is widely used for the discretizations of
  the fractional diffusion equations. Jin et al.~\cite[Remark
  3]{Jin-maximal2018} discovered that the L1 scheme is equivalent to
  discretization \cref{eq:Stau} with uniform temporal grids. For the numerical
  analysis of discretization \cref{eq:Stau} with nonuniform temporal grids, we
  refer the reader to \cite{Li2019SIAM,Li-Wang-Xie2020}.
\end{remark}

Using the variational discretization concept proposed in \cite{Hinze2005}, we
consider the following temporally discrete problem:
\begin{equation}
  \label{eq:numer_opti}
  \min\limits_{U \in U_{\text{ad}}}
  J_{\alpha,\tau}(U) := \frac12 \nm{
    (\mathcal S_{\alpha,\tau}\mathcal R_{\theta_0} U)(T-) - y_d
  }_Y^2 + \frac\nu2 \nm{U}_{L^2(0,T;Z)}^2.
\end{equation}
Note that \cref{eq:zq} implies that $ (\mathcal S_{\alpha,\tau}\mathcal
R_{\theta_0}\cdot)(T-) $ is a bounded linear operator from $ L^2(0,T;Z) $ to $ Y
$. In addition, $ U_\text{ad} $ is a convex, bounded and closed subset of $
L^2(0,T;Z) $. Hence, applying \cite[Theorems 2.14 and 2.21]{Troltzsh2010} to
problem \cref{eq:numer_opti} yields the following theorem, by
\cref{lem:Stau-dual}.

\begin{theorem}
  \label{thm:regu-U}
  Problem \cref{eq:numer_opti} admits a unique solution $ U \in U_\text{ad} $,
  and the following first-order optimality condition holds:
  \begin{subequations}
  \begin{numcases}{}
    Y = \mathcal S_{\alpha,\tau}\mathcal R_{\theta_0} U, \label{eq:optim-Y} \\
    P = \mathcal S_{\alpha,\tau}^*\big( (Y(T-)-y_d)\widehat\delta_T \big),
    \label{eq:optim-P} \\
    \int_0^T \big(
      \mathcal R_{\theta_0}^* P(t) + \nu U(t), V(t)-U(t)
    \big)_Z \, \mathrm{d}t \geqslant 0
    \quad \text{ for all } V \in U_\text{ad},
    \label{eq:optim-U}
  \end{numcases}
  \end{subequations}
  where
  \begin{equation}
    \label{eq:delta_T-def}
    \widehat\delta_T :=
    \begin{cases}
      0 & \text{ if } 0 < t < T-\tau, \\ \tau^{-1} & \text{ if }
      T-\tau < t < T.
    \end{cases}
  \end{equation}
\end{theorem}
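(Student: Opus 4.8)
The plan is to follow the pattern of the proof of \cref{thm:basic-regu}. First I would record that, by \cref{eq:zq}, the map $U \mapsto (\mathcal S_{\alpha,\tau}\mathcal R_{\theta_0}U)(T-)$ is a bounded linear operator from $L^2(0,T;Z)$ into $Y$, so $J_{\alpha,\tau}$ is a continuous quadratic functional on $L^2(0,T;Z)$; the regularization term $\frac\nu2\nm{U}_{L^2(0,T;Z)}^2$ with $\nu>0$ makes it strictly convex and coercive, and $U_\text{ad}$ is convex, bounded and closed in $L^2(0,T;Z)$, so \cite[Theorem 2.14]{Troltzsh2010} delivers a unique minimizer $U \in U_\text{ad}$. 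Next, $J_{\alpha,\tau}$ is Fr\'echet differentiable with
\[
  J_{\alpha,\tau}'(U)(V-U) = \big(
    (\mathcal S_{\alpha,\tau}\mathcal R_{\theta_0}(V-U))(T-),\, Y(T-)-y_d
  \big)_Y + \nu \int_0^T \big( U(t),\, V(t)-U(t) \big)_Z \,\mathrm{d}t,
\]
where $Y := \mathcal S_{\alpha,\tau}\mathcal R_{\theta_0}U$; hence \cite[Theorem 2.21]{Troltzsh2010} gives $J_{\alpha,\tau}'(U)(V-U) \geqslant 0$ for all $V \in U_\text{ad}$ (real parts being understood, exactly as in the continuous case).

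It then remains to recast the first term in adjoint form. Setting $v := Y(T-)-y_d$ and invoking \cref{lem:Stau-dual} with $g = \mathcal R_{\theta_0}(V-U)$ yields
\[
  \big( (\mathcal S_{\alpha,\tau}\mathcal R_{\theta_0}(V-U))(T-),\, v \big)_Y = \int_0^T \Dual{
    \mathcal R_{\theta_0}(V(t)-U(t)),\, P(t)
  }_{[X,Y]_{\theta_0}} \,\mathrm{d}t,
\]
where $P := \mathcal S_{\alpha,\tau}^*\big( v\,\widehat\delta_T \big)$ is precisely the discrete adjoint state \cref{eq:optim-P}; the discrete Dirac $\widehat\delta_T$ enters because $W(T-) = \tau^{-1}\int_{t_{J-1}}^{t_J} W\,\mathrm{d}t = \int_0^T W\,\widehat\delta_T\,\mathrm{d}t$ for every $W \in W_\tau(Y)$. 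Using the definition of $\mathcal R_{\theta_0}^*$ to rewrite $\Dual{\mathcal R_{\theta_0}(V(t)-U(t)),\,P(t)}_{[X,Y]_{\theta_0}} = \overline{(\mathcal R_{\theta_0}^* P(t),\, V(t)-U(t))_Z}$ and substituting back into the expression for $J_{\alpha,\tau}'(U)(V-U)$ produces \cref{eq:optim-U}.

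The only non-routine ingredient is \cref{lem:Stau-dual}, the discrete counterpart of \cref{eq:Sdelta-dual}, and I expect its proof to be the main obstacle: one must identify the adjoint of the discrete observation operator $(\mathcal S_{\alpha,\tau}\mathcal R_{\theta_0}\cdot)(T-)$ by testing the discrete state equation \cref{eq:Stau} (resp.\ \cref{eq:Stau-1} when $\alpha=1$) against the discrete adjoint state and exploiting the duality \cref{eq:dual} for $\D_{0+}^\alpha$ and $\D_{T-}^\alpha$ on the finite-dimensional space $W_\tau$, so that the fractional/jump contributions collapse to the single term $\int_0^T \dual{g,\, v\,\widehat\delta_T}_X\,\mathrm{d}t$. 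The subtle point is that the discrete adjoint state a priori lies only in $W_\tau(Y)$ rather than in the test space $W_\tau(X)$ --- the discrete manifestation of the low regularity intrinsic to Dirichlet boundary control --- so the pairing has to be set up with care. Once \cref{lem:Stau-dual} is available, the three steps above are mechanical and structurally identical to the proof of \cref{thm:basic-regu}.
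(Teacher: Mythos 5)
Your proposal matches the paper's argument: the paper likewise derives \cref{thm:regu-U} by noting that \cref{eq:zq} makes $(\mathcal S_{\alpha,\tau}\mathcal R_{\theta_0}\cdot)(T-)$ bounded from $L^2(0,T;Z)$ to $Y$, invoking \cite[Theorems 2.14 and 2.21]{Troltzsh2010} on the convex, bounded, closed set $U_\text{ad}$, and then converting the gradient term to adjoint form via \cref{lem:Stau-dual}, with $\widehat\delta_T$ arising exactly from $W(T-)=\int_0^T W\widehat\delta_T\,\mathrm{d}t$ for $W\in W_\tau(Y)$. Your identification of \cref{lem:Stau-dual} as the only non-routine ingredient (proved separately in the paper, using that $\mathcal S_{\alpha,\tau}^*g\in W_\tau(X)$ by \cref{eq:zq*}) is also accurate.
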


A simple modification of the proof of \cite[Theorem~4.3]{Li-Xie-Yan2020} yields
the following error estimate, by \cref{lem:conv-Stau}.
\begin{theorem} 
  \label{thm:conv}
  Assume that $ 0 < \alpha \leqslant 1 $. Let $ u $ and $ y $ be defined in
  \cref{thm:basic-regu}, and let $ U $ and $ Y $ be defined in
  \cref{thm:regu-U}. Then
  \begin{equation}
    \label{eq:conv}
    \begin{aligned}
      & \nm{(y-Y)(T-)}_Y + \sqrt\nu \nm{u-U}_{L^2(0,T;Z)} \\
      \leqslant{} & C_{c_0,\omega_0,\mathcal M_0,T}
      \left(
        \nm{y_d}_Y + \nm{\mathcal R_{\theta_0}}_{\mathcal L(Z,[X^*,Y]_{\theta_0})}
        \nm{u}_{L^\infty(0,T;Z)}
      \right) \times {} \\
      & \qquad \left(
        1/(\theta_0\alpha) +
        \sqrt{\varepsilon(\alpha,\theta_0,J)} +
        \varepsilon(\alpha,\theta_0,J) \tau^{\theta_0\alpha/2}
      \right) \tau^{\theta_0\alpha/2}.
    \end{aligned}
  \end{equation}
  where
  \begin{equation}
    \label{eq:vareps}
    \varepsilon(\alpha,\theta,J) :=
    \begin{cases}
      \frac1{\theta\alpha} +
      \frac{1-J^{\theta\alpha-1}}{1-\theta\alpha}
      & \text{ if } \theta\alpha \neq 1, \\
      \ln J &
      \text{ if } \theta\alpha = 1.
    \end{cases}
  \end{equation}
\end{theorem}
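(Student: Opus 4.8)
The plan is to follow the by-now-standard error analysis for variationally discretized control problems (essentially \cite[Theorem~4.3]{Li-Xie-Yan2020}): confront the two first-order optimality systems of \cref{thm:basic-regu,thm:regu-U} with each other, insert the discrete state driven by the \emph{continuous} optimal control, use the temporally discrete duality identity to turn the cross terms into squared norms that can be absorbed, and reduce what survives to the operator-level discretization estimates in \cref{lem:conv-Stau}. Choosing $v=U$ in \cref{eq:optim-u} and $V=u$ in \cref{eq:optim-U} and adding the two inequalities, the regularization contributions combine into $-\nu\nm{u-U}_{L^2(0,T;Z)}^2$, whence
\[
  \nu\nm{u-U}_{L^2(0,T;Z)}^2 \leqslant \int_0^T \big( \mathcal R_{\theta_0}^*(p-P)(t),\, (U-u)(t) \big)_Z \,\mathrm{d}t .
\]

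Set $\widetilde Y := \mathcal S_{\alpha,\tau}\mathcal R_{\theta_0} u$ (the discrete state for the continuous control $u$) and $\widetilde P := \mathcal S_{\alpha,\tau}^*\big( (\widetilde Y(T-)-y_d)\widehat\delta_T \big)$, and split $p-P=(p-\widetilde P)+(\widetilde P-P)$. Since $\widetilde P-P = \mathcal S_{\alpha,\tau}^*\big( (\widetilde Y-Y)(T-)\,\widehat\delta_T \big)$ and $\widetilde Y-Y = \mathcal S_{\alpha,\tau}\mathcal R_{\theta_0}(u-U)$, the temporally discrete analogue of \cref{eq:Sdelta-dual}, i.e.\ \cref{lem:Stau-dual}, together with the definition of $\mathcal R_{\theta_0}^*$, gives $\int_0^T ( \mathcal R_{\theta_0}^*(\widetilde P-P),\, U-u )_Z\,\mathrm{d}t = -\bnm{(\widetilde Y-Y)(T-)}_Y^2$, which is moved to the left-hand side. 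Decomposing further $p-\widetilde P=\Xi_1+\Xi_2$ with $\Xi_1 := \mathcal S_\alpha^*\big( (y(T)-y_d)\delta_T \big) - \mathcal S_{\alpha,\tau}^*\big( (y(T)-y_d)\widehat\delta_T \big)$ and $\Xi_2 := \mathcal S_{\alpha,\tau}^*\big( (y(T)-\widetilde Y(T-))\widehat\delta_T \big)$, the same identity rewrites $\int_0^T ( \mathcal R_{\theta_0}^*\Xi_2,\, U-u )_Z\,\mathrm{d}t$ as $\big( (Y-\widetilde Y)(T-),\, y(T)-\widetilde Y(T-) \big)_Y$ (up to a complex conjugate), so that a weighted Young inequality absorbs a fixed fraction of $\bnm{(\widetilde Y-Y)(T-)}_Y^2$ and leaves the state discretization error $\bnm{y(T)-\widetilde Y(T-)}_Y^2$. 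The remaining integral $\int_0^T ( \mathcal R_{\theta_0}^*\Xi_1,\, U-u )_Z\,\mathrm{d}t$ is estimated by H\"older's and Young's inequalities, after bounding the final datum by $\nm{y(T)-y_d}_Y \leqslant \nm{y_d}_Y + \bnm{(\mathcal S_\alpha\mathcal R_{\theta_0}u)(T)}_Y \leqslant \nm{y_d}_Y + C\,\nm{\mathcal R_{\theta_0}}_{\mathcal L(Z,[X^*,Y]_{\theta_0})}\nm{u}_{L^\infty(0,T;Z)}$ via \cref{eq:Sg-C} (with $C$ blowing up like $1/(\theta_0\alpha)$ as $\theta_0\alpha\to0$, since \cref{eq:Sg-C} needs $q>1/(\theta_0\alpha)$).

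At this point every surviving term is controlled by the state discretization error $y(T)-\widetilde Y(T-) = (\mathcal S_\alpha\mathcal R_{\theta_0}u)(T)-(\mathcal S_{\alpha,\tau}\mathcal R_{\theta_0}u)(T-)$, with source $\mathcal R_{\theta_0}u\in L^\infty(0,T;[X^*,Y]_{\theta_0})$, and by the adjoint discretization error $\mathcal S_\alpha^*\big( (y(T)-y_d)\delta_T \big) - \mathcal S_{\alpha,\tau}^*\big( (y(T)-y_d)\widehat\delta_T \big)$ in the norm dual to the one on which $\mathcal R_{\theta_0}^*$ acts, with final datum $y(T)-y_d\in Y$. \cref{lem:conv-Stau} bounds both by $\tau^{\theta_0\alpha/2}$ times factors involving $\varepsilon(\alpha,\theta_0,J)$ (see \cref{eq:vareps}) and $1/(\theta_0\alpha)$, which yields the estimate for $\nm{u-U}_{L^2(0,T;Z)}$ and for $\bnm{(\widetilde Y-Y)(T-)}_Y$. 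Finally $\nm{(y-Y)(T-)}_Y \leqslant \bnm{y(T)-\widetilde Y(T-)}_Y + \bnm{(\widetilde Y-Y)(T-)}_Y$, the second term being $\bnm{(\mathcal S_{\alpha,\tau}\mathcal R_{\theta_0}(u-U))(T-)}_Y \leqslant C\,\nm{\mathcal R_{\theta_0}}_{\mathcal L(Z,[X^*,Y]_{\theta_0})}\nm{u-U}_{L^2(0,T;Z)}$ by the $L^2$-stability of $\mathcal S_{\alpha,\tau}$ (cf.\ \cref{eq:zq}); assembling the pieces gives \cref{eq:conv}.

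The substance is packed into \cref{lem:conv-Stau}; within the present argument the delicate point is the low regularity of the data. The adjoint source $(y(T)-y_d)\delta_T$ generates the exact adjoint state $E_\alpha^*(T-\cdot)(y(T)-y_d)$, whose $[X,Y]_{\theta_0}$-norm behaves like $(T-t)^{\theta_0\alpha-1}$ near $t=T$ --- only integrable, not square-integrable, once $\theta_0\alpha\leqslant\tfrac12$ --- while the control error $u-U$ is controlled only in $L^2(0,T;Z)$ (the admissible set being merely bounded in $L^\infty(0,T;Z)$). Reconciling these is what forces the correct choice of temporal integrability exponents when invoking \cref{lem:conv-Stau}, the appearance of the half power $\tau^{\theta_0\alpha/2}$ rather than $\tau^{\theta_0\alpha}$, and the $\nu$-weighted Young inequalities that keep the final bound as clean as possible; one also has to keep every constant uniform in $\alpha$, in particular through the removable singularity at $\theta_0\alpha=1$. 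Everything else --- adding the variational inequalities, inserting $\widetilde Y$, and absorbing the cross terms --- is routine bookkeeping.
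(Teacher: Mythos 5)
Your proposal is essentially the argument the paper intends: the paper itself gives no detailed proof of \cref{thm:conv}, stating only that it follows by ``a simple modification of the proof of \cite[Theorem~4.3]{Li-Xie-Yan2020}, by \cref{lem:conv-Stau}'', and your reconstruction --- adding the two variational inequalities, inserting the auxiliary discrete state $\widetilde Y=\mathcal S_{\alpha,\tau}\mathcal R_{\theta_0}u$ and adjoint $\widetilde P$, converting the $\widetilde P-P$ and $\Xi_2$ cross terms via \cref{lem:Stau-dual} into $-\bnm{(\widetilde Y-Y)(T-)}_Y^2$ plus an absorbable term, and reducing the rest to \cref{eq:S-Stau-g,eq:S-Stau-vdelta-l1} --- is exactly that modification, with the correct identification of which term produces which factor in \cref{eq:conv}.

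One caveat on your last step: the remark that $\bnm{(\mathcal S_{\alpha,\tau}\mathcal R_{\theta_0}(u-U))(T-)}_Y\leqslant C\nm{\mathcal R_{\theta_0}}\nm{u-U}_{L^2(0,T;Z)}$ ``by the $L^2$-stability of $\mathcal S_{\alpha,\tau}$'' is not available with a $\tau$-uniform constant when $\theta_0\alpha\leqslant 1/2$: the discrete kernel $\mathcal E_\alpha$ behaves like $t^{\theta_0\alpha-1}$ in $\mathcal L([X^*,Y]_{\theta_0},Y)$ and is then not square-integrable near $t=0$ uniformly in $\tau$ (this is the same obstruction that forces $q>1/(\theta_0\alpha)$ in \cref{eq:Sg-C}). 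Fortunately you do not need it, since, as you note in the preceding sentence, $\bnm{(\widetilde Y-Y)(T-)}_Y$ is already controlled by the left-hand side of the absorbed inequality; that route should be the only one you keep. The only other place where the bookkeeping is genuinely delicate rather than routine is the $\Xi_1$ term, where the H\"older exponents must be chosen so that the $L^1(0,T;[X,Y]_{\theta_0})$ bound of \cref{lem:conv-Stau} (not an $L^2$ bound, which would degrade to $\tau^{\theta_0\alpha-1/2}$) is the one invoked, at the price of the square root that produces $\sqrt{\varepsilon(\alpha,\theta_0,J)}\,\tau^{\theta_0\alpha/2}$; you gesture at this correctly but leave the exponents implicit, as does the paper.
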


\subsection{Properties of \texorpdfstring{$\mathcal S_{\alpha,\tau}$}{}}
\label{ssec:Stau}
Assume that $ 0 < \alpha \leqslant 1 $ and $ g \in L^1(0,T;X^*) $. Define $
\{W_j\}_{j=1}^J \subset Y $ as follows: for any $ 1 \leqslant k \leqslant J $,
\begin{equation}
  \label{eq:L1}
  b_1 W_k + \sum_{j=1}^{k-1} (b_{k-j+1}-2b_{k-j}+b_{k-j-1})
  W_j - \tau^\alpha \mathcal A W_k =
  \tau^{\alpha-1} \int_{t_{k-1}}^{t_k} g(t) \, \mathrm{d}t
\end{equation}
in $ X^* $, where $ b_j := j^{1-\alpha}/\Gamma(2-\alpha)$ for each $ 0 \leqslant
j \leqslant J $. A straightforward computation yields that (cf.~\cite[Remark
3]{Jin-maximal2018})
\[
  (\mathcal S_{\alpha,\tau} g)(t_j-) = W_{j}
  \quad \forall 1 \leqslant j \leqslant J.
\]
Hence, we conclude from \cref{eq:L1,lem:z-A} that, for any $ 0 \leqslant \beta
\leqslant 1 $,
\begin{equation}
  \label{eq:zq}
  \mathcal S_{\alpha,\tau} \in \mathcal L\big(
    L^1(0,T;[X^*,Y]_{1-\beta}),\,
    L^\infty(0,T;[X,Y]_\beta)
  \big)
\end{equation}
and
\begin{equation}
  \label{eq:Stau-to-1}
  \lim_{\alpha \to {1-}} \nm{
    \mathcal S_{\alpha,\tau} - \mathcal S_{1,\tau}
  }_{
    \mathcal L(
    L^1(0,T;[X^*,Y]_{1-\beta}), \,
    L^\infty(0,T;[X,Y]_\beta)
    )
  } = 0.
\end{equation}
Symmetrically, for any $ 0 \leqslant \beta \leqslant 1 $ we have that
\begin{equation}
  \label{eq:zq*}
  \mathcal S_{\alpha,\tau}^* \in \mathcal L\big(
    L^1(0,T;[X^*,Y]_{1-\beta}),\,
    L^\infty(0,T;[X,Y]_\beta)
  \big)
\end{equation}
and
\begin{equation}
  \label{eq:Stau*-to-1}
  \lim_{\alpha \to {1-}} \nm{
    \mathcal S_{\alpha,\tau}^* - \mathcal S_{1,\tau}^*
  }_{
    \mathcal L(
    L^1(0,T;[X^*,Y]_{1-\beta}),\,
    L^\infty(0,T;[X,Y]_\beta)
    )
  } = 0.
\end{equation}

\begin{lemma}
  \label{lem:Stau-dual}
  Assume that $ 0 < \alpha \leqslant 1 $. For any $ f \in L^1(0,T;X^*) $ and $ g
  \in L^1(0,T;Y) $,
  \begin{equation}
    \label{eq:Stau-dual}
    \int_0^T (\mathcal S_{\alpha,\tau}f, \, g)_Y \, \mathrm{d}t
    = \int_0^T \dual{
      f,\, \mathcal S_{\alpha,\tau}^*g
    }_{X} \, \mathrm{d}t.
  \end{equation}
\end{lemma}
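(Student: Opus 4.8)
The plan is to prove \cref{eq:Stau-dual} in three stages: first for $0<\alpha<1$ under the extra assumption $f\in L^1(0,T;Y)$, then remove that assumption by density, and finally recover $\alpha=1$ by letting $\alpha\to1-$. Throughout, $J$ (hence $\tau$) is fixed. For the first stage, observe that by \cref{eq:zq} and \cref{eq:zq*}, both with $\beta=0$, the functions $V_1:=\mathcal S_{\alpha,\tau}f$ and $V_2:=\mathcal S_{\alpha,\tau}^*g$ belong to $W_\tau(X)$; being piecewise constant with $\alpha/2<1/2$, they also lie in ${}_0H^{\alpha/2}(0,T;X)\cap{}^0H^{\alpha/2}(0,T;X)$, and $\D_{0+}^\alpha V_1$, $\D_{T-}^\alpha V_2$ lie in $L^{2/(1+\alpha)}(0,T;X)$, since each is a finite sum of Heaviside-type terms whose fractional derivatives have only $t^{-\alpha}$-type, hence $L^{2/(1+\alpha)}$, singularities.

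With this in hand, I would take $V=V_2$ as the test function in \cref{eq:Stau} and $V=V_1$ as the test function in \cref{eq:S*tau}, obtaining
\[
  \int_0^T \dual{\D_{0+}^\alpha V_1,\, V_2}_X \,\mathrm{d}t
  - \int_0^T \dual{\mathcal A V_1,\, V_2}_X \,\mathrm{d}t
  = \int_0^T \dual{f,\, V_2}_X \,\mathrm{d}t
\]
together with the companion identity coming from \cref{eq:S*tau}, in which $\D_{T-}^\alpha$, $\mathcal A^*$ and $V_1$ take the roles of $\D_{0+}^\alpha$, $\mathcal A$ and $V_2$. Taking the complex conjugate of the companion identity, then using the symmetric fractional integration-by-parts \cref{eq:dual} with $\mathcal X=X$ (its ``case~2'' applies thanks to the $L^{2/(1+\alpha)}$ bounds above) to turn the $\D_{T-}^\alpha$-term into a $\D_{0+}^\alpha$-term, and using \cref{eq:A-dual} to turn the $\mathcal A^*$-term into an $\mathcal A$-term, the left-hand sides of the two identities become identical. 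Equating the right-hand sides and using $\overline{\dual{g,V_1}_X}=(V_1,g)_Y$ then gives exactly \cref{eq:Stau-dual} for $0<\alpha<1$ and $f\in L^1(0,T;Y)$.

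For the second stage, note that for fixed $g\in L^1(0,T;Y)$ the map $f\mapsto\int_0^T(\mathcal S_{\alpha,\tau}f,g)_Y\,\mathrm{d}t$ is bounded on $L^1(0,T;X^*)$ by \cref{eq:zq} with $\beta=1$, while $f\mapsto\int_0^T\dual{f,\mathcal S_{\alpha,\tau}^*g}_X\,\mathrm{d}t$ is bounded on $L^1(0,T;X^*)$ because $\mathcal S_{\alpha,\tau}^*g\in W_\tau(X)$ by \cref{eq:zq*} with $\beta=0$. Since $X$ is dense in $Y$, $Y$ is dense in $X^*$, so $L^1(0,T;Y)$ is dense in $L^1(0,T;X^*)$, and the identity extends to all $f\in L^1(0,T;X^*)$. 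For the third stage, let $\alpha\to1-$ in this identity: by \cref{eq:Stau-to-1} with $\beta=1$, $\mathcal S_{\alpha,\tau}f\to\mathcal S_{1,\tau}f$ in $L^\infty(0,T;Y)$, and by \cref{eq:Stau*-to-1} with $\beta=0$, $\mathcal S_{\alpha,\tau}^*g\to\mathcal S_{1,\tau}^*g$ in $L^\infty(0,T;X)$, which suffices to pass to the limit on both sides against the fixed $g\in L^1(0,T;Y)$ and $f\in L^1(0,T;X^*)$; this yields \cref{eq:Stau-dual} for $\alpha=1$ as well. (Alternatively $\alpha=1$ can be done directly by a discrete summation by parts on the jump terms in \cref{eq:Stau-1}--\cref{eq:S*tau-1}, but the limiting argument is shorter.)

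I expect the main obstacle to be not the algebra but the functional-analytic bookkeeping in the first stage: one must verify that the piecewise-constant solutions $\mathcal S_{\alpha,\tau}f$ and $\mathcal S_{\alpha,\tau}^*g$ are genuinely $X$-valued (which is precisely why $f$ is first restricted to $L^1(0,T;Y)$) rather than merely $Y$-valued, so that they are admissible test functions in \cref{eq:Stau}--\cref{eq:S*tau} and so that \cref{eq:dual} applies to them, all while keeping careful track of the identification $\dual{\cdot,\cdot}_X=(\cdot,\cdot)_Y$ and of the complex conjugations relating the two variational identities.
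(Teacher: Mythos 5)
Your proposal is correct, and its core is the same as the paper's: test the forward scheme \cref{eq:Stau} and the backward scheme \cref{eq:S*tau} against each other's solutions, then use the fractional integration by parts \cref{eq:dual} together with \cref{eq:A-dual}/\cref{eq:A-ext} to match the two left-hand sides. The only organizational differences are where the density argument sits and how $\alpha=1$ is handled: the paper notes $\mathcal S_{\alpha,\tau}^*g\in W_\tau(X)$ and uses the density of $X$ in $Y$ to enlarge the test space of \cref{eq:S*tau} to $W_\tau(Y)$, so that $V=\mathcal S_{\alpha,\tau}f\in W_\tau(Y)$ is admissible for every $f\in L^1(0,T;X^*)$ at once (no restriction of $f$ to $L^1(0,T;Y)$ and no second density step), whereas you first restrict $f$ so that $\mathcal S_{\alpha,\tau}f$ is $X$-valued and then pass to general $f$ by density of $L^1(0,T;Y)$ in $L^1(0,T;X^*)$ — both are legitimate, and your verification that the piecewise-constant solutions satisfy the hypotheses of case~2 of \cref{eq:dual} is exactly the bookkeeping the paper leaves implicit. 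For $\alpha=1$ the paper simply cites Thom\'ee, while your limit $\alpha\to1-$ via \cref{eq:Stau-to-1} and \cref{eq:Stau*-to-1} gives a self-contained alternative that mirrors how the paper itself treats the $\alpha=1$ case in \cref{lem:conv-Stau}.
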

\begin{proof} 
  Assume that $ 0 < \alpha < 1 $. By \cref{eq:zq*} we have $ \mathcal
  S_{\alpha,\tau}^* g \in W_\tau(X) $, and then \cref{eq:S*tau} and the density
  of $ X $ in $ Y $ yield that
  \begin{align}
    \int_0^T \big(
      (\D_{T-}^\alpha - \mathcal A^*) \mathcal S_{\alpha,\tau}^*g, V
    \big)_Y \, \mathrm{d}t =
    \int_0^T (g, V)_Y \, \mathrm{d}t
  \end{align}
  for all $ V \in W_\tau(Y) $. Hence,
  \begin{align*}
    \int_0^T (\mathcal S_{\alpha,\tau}f, \, g)_Y \, \mathrm{d}t
    & = \int_0^T \big(
      \mathcal S_{\alpha,\tau}f, \,
      (\D_{T-}^\alpha - \mathcal A^*)
      \mathcal S_{\alpha,\tau}^* g
    \big)_Y \, \mathrm{d}t \\
    & = \int_0^T \dual{
      (\D_{0+}^\alpha - \mathcal A) \mathcal S_{\alpha,\tau}f, \,
      \mathcal S_{\alpha,\tau}^*g
    }_X \, \mathrm{d}t
    \quad\text{(by \cref{eq:dual,eq:A-ext})} \\
    & = \int_0^T \dual{
      f,\, \mathcal S_{\alpha,\tau}^*g
    }_{X} \, \mathrm{d}t
    \quad\text{(by \cref{eq:Stau}).}
  \end{align*}
  This proves \cref{eq:Stau-dual} for $ 0 < \alpha < 1 $. For the proof of
  \cref{eq:Stau-dual} with $ \alpha=1 $, we refer the reader to \cite[Chapter
  12]{Thomee2006}.
\end{proof}

\begin{lemma} 
  \label{lem:conv-Stau}
  Assume that $ 0 < \alpha, \theta \leqslant 1 $ and $ p \in \{1,\infty\} $. For
  any $ g \in L^p(0,T;[X^*,Y]_\theta) $ we have
  \begin{equation} 
    \label{eq:S-Stau-g}
    \nm{
      (\mathcal S_\alpha - \mathcal S_{\alpha,\tau}) g
    }_{L^p(0,T;Y)}
    \leqslant C_{c_0,\omega_0,\mathcal M_0}
    \varepsilon(\alpha,\theta,J) \tau^{\theta\alpha}
    \nm{g}_{L^p(0,T;[X^*,Y]_\theta)},
  \end{equation}
  and for any $ v \in Y $ we have
  \begin{small}
  \begin{equation}
    \label{eq:S-Stau-vdelta-l1}
    \nm{
      \mathcal S_\alpha(v\delta_0) -
      \mathcal S_{\alpha,\tau}(v\widehat\delta_0)
    }_{L^1(0,T;[X,Y]_\theta)}
    \leqslant C_{c_0,\omega_0,\mathcal M_0}
    \varepsilon(\alpha,\theta,J) \tau^{\theta\alpha}
    \nm{v}_Y,
  \end{equation}
  \end{small}
  where $ \varepsilon(\cdot,\cdot,\cdot) $ is defined by \cref{eq:vareps} and
  \begin{equation}
    \label{eq:delta_0}
    \widehat\delta_0(t) := \begin{cases}
      t_1^{-1} & \text{ if } 0 < t < t_1, \\ 0 & \text{ if } t_1 < t < T.
    \end{cases}
  \end{equation}
\end{lemma}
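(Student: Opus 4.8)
The plan is to reduce both \eqref{eq:S-Stau-g} and \eqref{eq:S-Stau-vdelta-l1} to a single pointwise-in-time bound on the difference between the continuous kernel $E_\alpha$ of \eqref{eq:E-def} and the kernel of the discrete operator $\mathcal S_{\alpha,\tau}$, and then to close the estimates with Young's inequality for (discrete) convolutions. The preliminary step is a Dunford--Taylor representation of $\mathcal S_{\alpha,\tau}$ paralleling \eqref{eq:Sg-l1}--\eqref{eq:E-def}. Since the coefficients in \eqref{eq:L1} depend only on $k-j$, that recursion is a convolution quadrature; writing $\mu_\tau(\zeta):=b_1+\sum_{m\geqslant1}(b_{m+1}-2b_m+b_{m-1})\zeta^m$ for its generating function, applying Cauchy's formula on a circle of radius slightly below $1$, and deforming via $\zeta=e^{-z\tau}$ onto the truncated sector contour $\Gamma_{\omega_0}^\tau:=\{z\in\Gamma_{\omega_0}:\snm{\Im(z\tau)}\leqslant\pi\}$, one obtains a representation of the nodal values $W_k=(\mathcal S_{\alpha,\tau}g)(t_k-)$ in which $R(z^\alpha,\mathcal A)$ is replaced by $R(\delta_\tau(z),\mathcal A)$, where $\delta_\tau(z):=\tau^{-\alpha}\mu_\tau(e^{-z\tau})$ is the discrete symbol. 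I would borrow this representation, together with the standard facts $\delta_\tau(z)\in\Sigma_{\omega_0}$, $\snm{\delta_\tau(z)}\simeq\snm{z}^{\alpha}$ and the consistency bound $\snm{\delta_\tau(z)-z^{\alpha}}\leqslant C_{\omega_0}\,\tau\snm{z}^{1+\alpha}$ on $\Gamma_{\omega_0}^\tau$, from \cite{Jin-maximal2018,Li2019SIAM,Li-Wang-Xie2020,Li-Xie-Yan2020}.

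The core of the argument is then the kernel estimate
\[
  \nm{(E_\alpha-E_{\alpha,\tau})(t)}_{\mathcal L([X^*,Y]_\theta,Y)}
  +\nm{(E_\alpha-E_{\alpha,\tau})(t)}_{\mathcal L(Y,[X,Y]_\theta)}
  \leqslant C_{c_0,\omega_0,\mathcal M_0}
  \begin{cases}
    t^{\theta\alpha-1}, & 0<t\leqslant\tau,\\[2pt]
    \tau\, t^{\theta\alpha-2}, & \tau<t\leqslant T,
  \end{cases}
\]
where $E_{\alpha,\tau}$ is the piecewise-constant discrete kernel. For $t\leqslant\tau$ each of $E_\alpha(t)$ and $E_{\alpha,\tau}(t)$ is bounded separately by $t^{\theta\alpha-1}$, using \cref{lem:E-E*} for the former and \cref{lem:z-A} (with $z^\alpha$ replaced by $\delta_\tau(z)$) for the latter. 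For $t>\tau$ one forms the difference of the two Dunford--Taylor integrals, bounds the integrand by $C\,\tau\snm{z}^{1-\theta\alpha}e^{t\Re z}$ by means of the resolvent identity, the consistency bound and \cref{lem:z-A}, and then performs the scaling substitution $u=t\snm{z}$ along the rays of $\Gamma_{\omega_0}^\tau$; the contributions from the portion of the contour with $\snm{z}\sim\tau^{-1}$ are estimated directly. Integrating the displayed bound over $(0,T)$ yields $\tfrac1{\theta\alpha}\tau^{\theta\alpha}+\tau\int_\tau^T t^{\theta\alpha-2}\,\mathrm dt$, which, using $T=J\tau$, equals $C\,\varepsilon(\alpha,\theta,J)\,\tau^{\theta\alpha}$ in both cases of \eqref{eq:vareps} (the logarithm appearing exactly when $\theta\alpha=1$).

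Given the kernel estimate, \eqref{eq:S-Stau-g} follows from Young's inequality for $p\in\{1,\infty\}$, once the routine bookkeeping relating $\int_0^t E_\alpha(t-s)g(s)\,\mathrm ds$ to the discrete convolution of $E_{\alpha,\tau}$ against the cell averages $\tau^{-1}\int_{t_{j-1}}^{t_j}g$ is carried out: one peels off $\mathcal S_\alpha(I-\Pi_\tau)g$, where $\Pi_\tau$ is the cell-averaging projection onto $W_\tau(\cdot)$ (this term, a convolution of $E_\alpha$ against a cell-wise mean-zero function, is controlled by \eqref{eq:E'-X-Y}--\eqref{eq:E'-X*-Y}), and compares $\mathcal S_\alpha\Pi_\tau g$ with $\mathcal S_{\alpha,\tau}\Pi_\tau g$ first at the nodes and then on each cell $(t_{k-1},t_k]$ using $\snm{t-t_k}<\tau$ and the derivative bounds again. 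Estimate \eqref{eq:S-Stau-vdelta-l1} is obtained in the same way: the cell average of $v\widehat\delta_0$ is $\tau^{-1}v$ on $(0,t_1)$ and $0$ afterwards, so $\mathcal S_{\alpha,\tau}(v\widehat\delta_0)$ is the discrete fundamental solution applied to $v$; one bounds $\nm{\mathcal S_\alpha(v\delta_0)-\mathcal S_\alpha(v\widehat\delta_0)}_{L^1(0,T;[X,Y]_\theta)}$ via \eqref{eq:E'-X-Y}, and $\nm{\mathcal S_\alpha(v\widehat\delta_0)-\mathcal S_{\alpha,\tau}(v\widehat\delta_0)}_{L^1(0,T;[X,Y]_\theta)}$ by applying Young's inequality with the second ($\mathcal L(Y,[X,Y]_\theta)$) form of the kernel estimate to $\tau^{-1}v\,\mathbf 1_{(0,t_1)}\in L^1(0,T;Y)$. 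The case $\alpha=1$ (the $\mathrm{dG}(0)$/backward-Euler scheme) is covered by the same argument with $\delta_\tau(z)=(1-e^{-z\tau})/\tau$, or by the parabolic theory in \cite{Thomee2006}. The main obstacle is the second ($t>\tau$) half of the kernel estimate: extracting the extra factor $\tau/t$ over the naive bound $t^{\theta\alpha-1}$ is exactly what forces one to use the sharp consistency of $\delta_\tau$ on the truncated contour and to handle the truncation at $\snm{z}\sim\tau^{-1}$ with care, and it is the interplay of this truncation with the integration in $t$ that produces the full dependence of the constant on $J$ in \eqref{eq:vareps}.
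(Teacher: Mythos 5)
Your proposal is correct and follows essentially the same route as the paper: a Dunford--Taylor representation of the discrete solution operator via a discrete symbol on a truncated contour (the paper's $\mathcal E_\alpha$ built from $\tau^{-\alpha}e^{-z}\psi_\alpha(z)$ on $\Upsilon_{\omega^*}$), the consistency bound $\snm{\psi_\alpha(z)-z^\alpha}\leqslant C\snm{z}^{\alpha+1}$, the kernel difference estimates $t^{\theta\alpha-1}$ for $t\leqslant\tau$ and $\tau t^{\theta\alpha-2}$ for $t>\tau$ (Lemmas \ref{eq:A-X-Y}--\ref{lem:E-calE-l1}), and Young-type convolution bookkeeping cell by cell. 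The only notable divergences are cosmetic: the paper shifts the kernel by $\tau$ rather than projecting $g$ onto $W_\tau$, and it obtains the case $\alpha=1$ by passing to the limit $\alpha\to1-$ via \cref{lem:S-to-1} and \cref{eq:Stau-to-1} instead of treating the backward-Euler symbol directly.
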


The main task of the rest of this subsection is to prove \cref{lem:conv-Stau}.
Firstly, we summarize some auxiliary results in \cite{Li-Xie-Yan2020}. Assume
that $ 0 < \alpha < 1 $. For any $ z \in \mathbb C \setminus \{0\} $ with $ -\pi
\leqslant \Im z \leqslant \pi $, define
\begin{equation}
  \psi_\alpha(z) :=
  (e^z-1)^2 \sum_{k=-\infty}^\infty
  (z + 2k\pi i)^{\alpha-2}.
\end{equation}
There exists $ \pi/2 < \omega^* \leqslant
\omega_0 $, depending only on $ \omega_0 $, such that
\begin{equation}
  \label{eq:0}
  e^{-z} \psi_\alpha(z) \in \Sigma_{\omega_0}
  \text{ for all $ z \in \Sigma_{\omega^*} $ with $ -\pi
  \leqslant \Im z \leqslant \pi $ }
\end{equation}
and that, for any $ z \in \Upsilon_{\omega^*} \setminus \{0\} $,
\begin{align}
  \snm{e^{-z}\psi_\alpha(z)} & \geqslant
  C_{\omega_0} \snm{z}^\alpha, \label{eq:psi>} \\
  \snm{\psi_\alpha(z) - z^\alpha} & \leqslant
  C_{\omega_0} \snm{z}^{\alpha+1}.  \label{eq:129}
\end{align}
Define
\begin{equation} 
  \label{eq:calE-def}
  \mathcal E_\alpha(t) := \tau^{-1}
  \mathcal E_{\alpha,\lfloor t/\tau \rfloor}, \quad t > 0,
\end{equation}
where $ \lfloor \cdot \rfloor $ is the floor function and
\begin{equation}
  \label{eq:calEj} \mathcal E_{\alpha,j} := \frac1{2\pi i}
  \int_{\Upsilon_{\omega^*}} e^{jz}
  R(\tau^{-\alpha}e^{-z}\psi_\alpha(z), \mathcal A)
  \, \mathrm{d}z, \quad j \in \mathbb N.
\end{equation}
Following the proof of \cite[Lemma 3.5]{Li-Xie-Yan2020}, we obtain that, for any
$ g \in L^1(0,T;Y) $,
\begin{equation} 
  \label{eq:Stau-g}
  (\mathcal S_{\alpha,\tau} g)(t_j-) =
  \int_0^{t_j} \mathcal E_\alpha(t_j-t) g(t) \, \mathrm{d}t
  \quad \forall 1 \leqslant j \leqslant J.
\end{equation}
Since \cref{eq:A-X*-Y-Y,eq:0,eq:psi>,eq:calE-def,eq:calEj} imply $ \nm{\mathcal
E_\alpha}_{L^\infty(0,T;\mathcal L(X^*,Y))} < \infty $, from \cref{eq:zq} and
the fact that $ L^1(0,T;Y) $ is dense in $ L^1(0,T;X^*) $ we conclude that
\cref{eq:Stau-g} holds for all $ g \in L^1(0,T;X^*) $.

Secondly, we present some auxiliary estimates in the following three lemmas.
\begin{lemma}
  \label{eq:A-X-Y}
  For any $ 0 < \alpha < 1 $, $ 0 \leqslant \theta \leqslant 1 $ and $ z \in
  \Upsilon_{\omega^*} \setminus \{0\} $,
  \begin{small}
  \begin{align} 
    \nm{
      e^{z} R(\tau^{-\alpha}z^\alpha,\mathcal A)
      - R(\tau^{-\alpha}e^{-z}\psi_\alpha(z),\mathcal A)
    }_{\mathcal L(Y,[X,Y]_\theta)} &
    \leqslant \frac{
      C_{c_0,\omega_0,\mathcal M_0} \snm{z}
    }{
      1+(\tau^{-\alpha}\snm{z}^\alpha)^\theta
    }, \label{eq:A-X-Y-diff} \\
    \nm{
      e^{z} R(\tau^{-\alpha}z^\alpha,\mathcal A) -
      R(\tau^{-\alpha}e^{-z}\psi_\alpha(z),\mathcal A)
    }_{\mathcal L([X^*,Y]_\theta,Y)} &
    \leqslant \frac{C_{c_0,\omega_0,\mathcal M_0}
    \snm{z}}{1+(\tau^{-\alpha}\snm{z}^\alpha)^\theta}.
    \label{eq:A-X*-Y-diff}
  \end{align}
  \end{small}
\end{lemma}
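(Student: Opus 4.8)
The plan is to base everything on the resolvent identity. Writing $\lambda := \tau^{-\alpha}z^\alpha$ and $\mu := \tau^{-\alpha}e^{-z}\psi_\alpha(z)$, adding and subtracting $e^{z}R(\mu,\mathcal A)$ and using $R(\lambda,\mathcal A)-R(\mu,\mathcal A)=(\mu-\lambda)R(\lambda,\mathcal A)R(\mu,\mathcal A)$ gives
\[
  e^{z} R(\lambda,\mathcal A) - R(\mu,\mathcal A)
  = e^{z}(\mu-\lambda)\, R(\lambda,\mathcal A)\, R(\mu,\mathcal A)
  + (e^{z}-1)\, R(\mu,\mathcal A),
\]
so both estimates reduce to bounding these two terms in the relevant operator norms.

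First I would collect the elementary facts on the contour. Since $\pi/2<\omega^*<\pi$ and $\Im z$ is restricted to $[-\pi,\pi]$, the set $\Upsilon_{\omega^*}$ is bounded, say $\snm{z}\leqslant C_{\omega^*}$; hence $\snm{e^{\pm z}}\leqslant C_{\omega^*}$ and $\snm{e^{\pm z}-1}\leqslant C_{\omega^*}\snm{z}$. Combining $\snm{e^{-z}-1}\leqslant C_{\omega^*}\snm{z}$, $\snm{z^\alpha}=\snm{z}^\alpha$ and \eqref{eq:129}, one gets
\[
  \snm{\mu-\lambda}
  = \tau^{-\alpha}\snm{e^{-z}(\psi_\alpha(z)-z^\alpha)+(e^{-z}-1)z^\alpha}
  \leqslant C_{\omega_0}\,\tau^{-\alpha}\snm{z}^{\alpha+1}.
\]
I also need $\lambda,\mu$ in the sector where \cref{lem:z-A} applies: from $\arg z=\pm\omega^*$ and $0<\alpha<1$ we have $\arg\lambda=\pm\alpha\omega^*$ with $\alpha\omega^*<\omega^*\leqslant\omega_0$, so $\lambda\in\Sigma_{\omega_0}\setminus\{0\}$ with $\snm{\lambda}=\tau^{-\alpha}\snm{z}^\alpha$; moreover \eqref{eq:0} gives $\mu\in\Sigma_{\omega_0}$ and \eqref{eq:psi>} gives $\snm{\mu}\geqslant C_{\omega_0}\tau^{-\alpha}\snm{z}^\alpha>0$. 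In particular $1+\snm{\mu}^\theta$ is bounded below by a constant (depending on $\omega_0$) times $1+(\tau^{-\alpha}\snm{z}^\alpha)^\theta$, which is the source of the denominators in the claimed bounds.

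To prove \eqref{eq:A-X-Y-diff} I would estimate the first term in $\mathcal L(Y,[X,Y]_\theta)$ by letting $R(\mu,\mathcal A)$ act first on $Y$ — using $\nm{R(\mu,\mathcal A)}_{\mathcal L(Y)}\leqslant\mathcal M_0/(1+\snm{\mu})$ from \eqref{eq:R(z,A)-Y} — and then $R(\lambda,\mathcal A):Y\to[X,Y]_\theta$ via \eqref{eq:A-Y-X-Y}; together with $\snm{e^z}\snm{\mu-\lambda}\leqslant C\tau^{-\alpha}\snm{z}^{\alpha+1}$ and the elementary inequality
\[
  \frac{\tau^{-\alpha}\snm{z}^{\alpha+1}}{1+\tau^{-\alpha}\snm{z}^\alpha}
  = \snm{z}\,\frac{\tau^{-\alpha}\snm{z}^\alpha}{1+\tau^{-\alpha}\snm{z}^\alpha}
  \leqslant \snm{z},
\]
this term is $\leqslant C_{c_0,\omega_0,\mathcal M_0}\snm{z}\,(1+(\tau^{-\alpha}\snm{z}^\alpha)^\theta)^{-1}$. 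The second term is $\leqslant\snm{e^z-1}\,\nm{R(\mu,\mathcal A)}_{\mathcal L(Y,[X,Y]_\theta)}\leqslant C\snm{z}\,(1+(\tau^{-\alpha}\snm{z}^\alpha)^\theta)^{-1}$, again by \eqref{eq:A-Y-X-Y} and the lower bound on $\snm{\mu}$. For \eqref{eq:A-X*-Y-diff} the computation is identical after exchanging which factor carries the exponent $\theta$: in the first term bound $R(\mu,\mathcal A):[X^*,Y]_\theta\to Y$ by \eqref{eq:A-X*-Y-Y} and $R(\lambda,\mathcal A):Y\to Y$ by \eqref{eq:R(z,A)-Y}, and in the second term bound $R(\mu,\mathcal A):[X^*,Y]_\theta\to Y$ by \eqref{eq:A-X*-Y-Y}. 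I do not expect any genuine obstacle here; the only points that need care are checking that $\arg\lambda$ stays inside $\Sigma_{\omega_0}$ so that \cref{lem:z-A} is legitimately applicable, and the constant bookkeeping when replacing $\snm{\mu}$ by $\tau^{-\alpha}\snm{z}^\alpha$ in the denominators.
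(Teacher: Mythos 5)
Your proposal is correct and follows essentially the same route as the paper: both reduce the difference to a resolvent--perturbation identity and then combine \eqref{eq:R(z,A)-Y}, \cref{eq:A-Y-X-Y} and \cref{eq:A-X*-Y-Y} with \eqref{eq:0}, \eqref{eq:psi>}, \eqref{eq:129} and the boundedness of $\Upsilon_{\omega^*}$. The only (cosmetic) difference is that the paper writes the difference as $\big(\tau^{-\alpha}(\psi_\alpha(z)-z^\alpha)+(1-e^z)\mathcal A\big)R(\tau^{-\alpha}z^\alpha,\mathcal A)R(\tau^{-\alpha}e^{-z}\psi_\alpha(z),\mathcal A)$ and disposes of the $\mathcal A$-term via $\mathcal AR(\lambda,\mathcal A)=\lambda R(\lambda,\mathcal A)-I$, whereas you substitute that identity from the outset, obtaining an algebraically equivalent two-term decomposition estimated by the same bounds.
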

\begin{proof} 
  A straightforward computation gives
  \begin{align*}
    & e^{z}R(\tau^{-\alpha}z^\alpha,\mathcal A) -
    R(\tau^{-\alpha}e^{-z}\psi_\alpha(z),\mathcal A) \\
    ={} & \big( \tau^{-\alpha} \big( \psi_\alpha(z) - z^\alpha) +
    (1-e^{z})\mathcal A \big)
    R(\tau^{-\alpha} z^\alpha, \mathcal A)
    R(\tau^{-\alpha}e^{-z}\psi_\alpha(z),\mathcal A) \\
    ={} & \mathbb I_1 + \mathbb I_2,
  \end{align*}
  where
  \begin{align*}
    \mathbb I_1 &:=
    \tau^{-\alpha}(\psi_\alpha(z)-z^\alpha) R(\tau^{-\alpha}z^\alpha,\mathcal A)
    R(\tau^{-\alpha} e^{-z}\psi_\alpha(z),\mathcal A), \\
    \mathbb I_2 &:= (1-e^z)\mathcal A
    R(\tau^{-\alpha}z^\alpha,\mathcal A)
    R(\tau^{-\alpha} e^{-z}\psi_\alpha(z),\mathcal A).
  \end{align*}
  We conclude from \cref{eq:A-Y-X-Y,eq:0,eq:psi>} that, for any $ 0 \leqslant
  \beta \leqslant 1 $,
  \begin{align} 
    \nm{R(\tau^{-\alpha}z^\alpha, \mathcal A)}_{\mathcal L(Y,[X,Y]_\beta)} &
    \leqslant C_{\mathcal M_0} (1+(\tau^{-\alpha}\snm{z}^\alpha)^\beta)^{-1},
    \label{eq:lxy-1} \\
    \nm{
      R(\tau^{-\alpha}e^{-z}\psi_\alpha(z),\mathcal A)
    }_{\mathcal L(Y,[X,Y]_\beta)} &
    \leqslant C_{c_0,\omega_0,\mathcal M_0}
    (1+(\tau^{-\alpha}\snm{z}^\alpha)^\beta)^{-1}.
    \label{eq:lxy-2}
  \end{align}
  For $ \mathbb I_1 $ we have, by \cref{eq:129,eq:lxy-1,eq:lxy-2},
  \begin{small}
  \begin{align*}
    & \nm{\mathbb I_1}_{\mathcal L(Y,[X,Y]_\theta)} \\
    \leqslant{} &
    C_{\omega_0} \tau^{-\alpha} \snm{z}^{\alpha+1}
    \nm{
      R(\tau^{-\alpha}z^\alpha,\mathcal A)
    }_{\mathcal L(Y,[X,Y]_\theta)} \nm{
      R(\tau^{-\alpha}e^{-z}\psi_\alpha(z),\mathcal A)
    }_{\mathcal L(Y)} \\
    \leqslant{}& \frac{ C_{c_0,\omega_0,\mathcal M_0}
    \tau^{-\alpha} \snm{z}^{\alpha+1} }{
      \big( 1+\tau^{-\alpha}\snm{z}^\alpha \big)
      \big( 1+(\tau^{-\alpha}\snm{z}^\alpha)^\theta \big)
    } \leqslant
    \frac{
      C_{c_0,\omega_0,\mathcal M_0} \snm{z}
    }{
      1+(\tau^{-\alpha}\snm{z}^\alpha)^\theta
    }.
  \end{align*}
  \end{small}
  Since
  \begin{align*} 
    & \nm{
      \mathcal AR(\tau^{-\alpha}z^\alpha,\mathcal A)
      R(\tau^{-\alpha}e^{-z}\psi_\alpha(z), \mathcal A)
    }_{\mathcal L(Y,[X,Y]_\theta)} \\
    ={} & \nm{
      (\tau^{-\alpha}z^\alpha
      R(\tau^{-\alpha}z^\alpha,\mathcal A) - I) R(\tau^{-\alpha}e^{-z}\psi_\alpha(z),
      \mathcal A)
    }_{\mathcal L(Y,[X,Y]_\theta)} \\
    \leqslant{} &
    \nm{
      \tau^{-\alpha} z^\alpha
      R(\tau^{-\alpha}z^\alpha,\mathcal A)
    }_{\mathcal L(Y,[X,Y]_\theta)} \nm{
      R(\tau^{-\alpha}e^{-z}\psi_\alpha(z),\mathcal A)
    }_{\mathcal L(Y)} \\
    & \qquad {} + \nm{
      R(\tau^{-\alpha}e^{-z}\psi_\alpha(z),\mathcal A)
    }_{\mathcal (Y,[X,Y]_\theta)} \\
    \leqslant{} &
    \frac{C_{c_0,\omega_0,\mathcal M_0}}{
      1+(\tau^{-\alpha}\snm{z}^\alpha)^\theta
    } \quad\text{(by \cref{eq:lxy-1,eq:lxy-2}),}
  \end{align*}
  we obtain
  \[
    \nm{\mathbb I_2}_{\mathcal L(Y,[X,Y]_\theta)}
    \leqslant \frac{C_{c_0,\omega_0,\mathcal M_0} \snm{z}}{
      1+(\tau^{-\alpha}\snm{z}^\alpha)^\theta
    }.
  \]
  Combining the above estimates of $ \mathbb I_1 $ and $ \mathbb I_2 $ proves
  \cref{eq:A-X-Y-diff}. Since \cref{eq:A-X*-Y-diff} can be derived analogously,
  this completes the proof.
\end{proof}

\begin{lemma}
  \label{lem:E-calE}
  Assume that $ 0 < \alpha < 1 $ and $ 0 \leqslant \theta \leqslant 1 $. Then
  \begin{align}
    & \max_{1 \leqslant j \leqslant J}
    j^{2-\theta\alpha} \nm{E_\alpha(t_j) - \mathcal E_\alpha(t_j-)}_{
      \mathcal L(Y,[X,Y]_\theta)
    } \leqslant C_{c_0,\omega_0,\mathcal M_0}
    \tau^{\theta\alpha-1},
    \label{eq:E-calE-X*-Y-X-Y}\\
    & \max_{1 \leqslant j \leqslant J}
    j^{2-\theta\alpha} \nm{E_\alpha(t_j) - \mathcal E_\alpha(t_j-)}_{
      \mathcal L([X^*,Y]_\theta,Y)
    } \leqslant C_{c_0,\omega_0,\mathcal M_0}
    \tau^{\theta\alpha-1}.
    \label{eq:E-calE-X*-Y}
  \end{align}
\end{lemma}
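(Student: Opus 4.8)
The plan is to reduce $E_\alpha(t_j)-\mathcal E_\alpha(t_j-)$ to two contour integrals that are controlled by the resolvent estimates already established, namely \cref{eq:A-X-Y-diff,eq:A-X*-Y-diff} from the preceding lemma together with \cref{eq:A-Y-X-Y,eq:A-X*-Y-Y}. First, \cref{eq:calE-def} and $\lfloor t/\tau\rfloor=j-1$ on $(t_{j-1},t_j)$ give $\mathcal E_\alpha(t_j-)=\tau^{-1}\mathcal E_{\alpha,j-1}$. In \cref{eq:E-def} I would deform $\Gamma_{\omega_0}$ to $\Gamma_{\omega^*}$ — admissible since $0<\alpha<1$ forces $\snm{\arg z^\alpha}\leqslant\alpha\omega_0<\omega_0$ in the swept sector, so $z^\alpha\in\rho(\mathcal A)$ there, while $e^{t_jz}$ decays because $\Re z<0$ on the intermediate rays — and then substitute $z=w/\tau$, which fixes the scaling-invariant contour $\Gamma_{\omega^*}$, to get
\[
  E_\alpha(t_j)=\frac{\tau^{-1}}{2\pi i}\int_{\Gamma_{\omega^*}}e^{jw}R(\tau^{-\alpha}w^\alpha,\mathcal A)\,\mathrm{d}w.
\]
Splitting $\Gamma_{\omega^*}=\Upsilon_{\omega^*}\cup\Gamma'$ with $\Gamma':=\{w\in\Gamma_{\omega^*}:\snm{\Im w}>\pi\}$, writing $e^{jw}=e^{w}e^{(j-1)w}$, and using \cref{eq:calEj} in the form $\mathcal E_{\alpha,j-1}=\frac1{2\pi i}\int_{\Upsilon_{\omega^*}}e^{(j-1)z}R(\tau^{-\alpha}e^{-z}\psi_\alpha(z),\mathcal A)\,\mathrm{d}z$, I obtain $2\pi i\,\tau\big(E_\alpha(t_j)-\tau^{-1}\mathcal E_{\alpha,j-1}\big)=\mathbb I_1+\mathbb I_2$, where $\mathbb I_1:=\int_{\Upsilon_{\omega^*}}e^{(j-1)z}\big(e^{z}R(\tau^{-\alpha}z^\alpha,\mathcal A)-R(\tau^{-\alpha}e^{-z}\psi_\alpha(z),\mathcal A)\big)\,\mathrm{d}z$ and $\mathbb I_2:=\int_{\Gamma'}e^{jw}R(\tau^{-\alpha}w^\alpha,\mathcal A)\,\mathrm{d}w$. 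The cancellation producing $\mathbb I_1$ is the whole reason for the shift $e^{jw}=e^{w}e^{(j-1)w}$: its integrand is exactly the operator estimated in \cref{eq:A-X-Y-diff,eq:A-X*-Y-diff}.

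To bound $\mathbb I_1$ I would use \cref{eq:A-X-Y-diff} (for the $\mathcal L(Y,[X,Y]_\theta)$-estimate) and \cref{eq:A-X*-Y-diff} (for the $\mathcal L([X^*,Y]_\theta,Y)$-estimate) to dominate the bracketed operator by $C_{c_0,\omega_0,\mathcal M_0}\snm{z}/(1+(\tau^{-\alpha}\snm{z}^\alpha)^\theta)$, parametrise $\Upsilon_{\omega^*}$ by its two rays $z=re^{\pm i\omega^*}$, $0\leqslant r\leqslant\pi/\sin\omega^*$, on which $\Re z=r\cos\omega^*<0$ (as $\omega^*>\pi/2$), and use the elementary bound $\snm{z}/(1+(\tau^{-\alpha}\snm{z}^\alpha)^\theta)\leqslant\tau^{\alpha\theta}r^{1-\alpha\theta}$. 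This reduces $\nm{\mathbb I_1}$ to a fixed multiple of $\tau^{\alpha\theta}\int_0^{\pi/\sin\omega^*}e^{-(j-1)r\snm{\cos\omega^*}}r^{1-\alpha\theta}\,\mathrm{d}r$. For $j\geqslant2$ I extend the integral to $(0,\infty)$, evaluate the resulting Gamma integral, and use $j-1\geqslant j/2$ to bound it by $C_{c_0,\omega_0,\mathcal M_0}\tau^{\alpha\theta}j^{\theta\alpha-2}$; for $j=1$ the weight $j^{2-\theta\alpha}$ is $1$ and the integral over the bounded contour $\Upsilon_{\omega^*}$ is simply $\leqslant C_{\omega_0}\tau^{\alpha\theta}$. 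In either case $j^{2-\theta\alpha}\tau^{-1}\nm{\mathbb I_1}\leqslant C_{c_0,\omega_0,\mathcal M_0}\tau^{\theta\alpha-1}$.

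To bound $\mathbb I_2$, the key point is that $\snm{w}>\pi/\sin\omega^*$ on $\Gamma'$, so $\snm{e^{jw}}=e^{-j\snm{w}\snm{\cos\omega^*}}$ decays exponentially in $j$ uniformly on $\Gamma'$; bounding $\nm{R(\tau^{-\alpha}w^\alpha,\mathcal A)}$ by $C_{c_0,\mathcal M_0}/(1+(\tau^{-\alpha}\snm{w}^\alpha)^\theta)\leqslant C_{c_0,\mathcal M_0}\tau^{\alpha\theta}\snm{w}^{-\alpha\theta}$ via \cref{eq:A-Y-X-Y} (resp.~\cref{eq:A-X*-Y-Y}) and integrating over the two rays gives $\nm{\mathbb I_2}\leqslant C_{c_0,\omega_0,\mathcal M_0}\tau^{\alpha\theta}e^{-cj}/j$ for some $c=c(\omega_0)>0$; since $\sup_{j\geqslant1}j^{1-\theta\alpha}e^{-cj}\leqslant C_{\omega_0}$, also $j^{2-\theta\alpha}\tau^{-1}\nm{\mathbb I_2}\leqslant C_{c_0,\omega_0,\mathcal M_0}\tau^{\theta\alpha-1}$. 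Adding the two contributions and taking the maximum over $1\leqslant j\leqslant J$ proves \cref{eq:E-calE-X*-Y-X-Y}, and running the identical computation with the $\mathcal L([X^*,Y]_\theta,Y)$ estimates proves \cref{eq:E-calE-X*-Y}. The step I expect to need the most care is the opening contour surgery: deforming all of $\Gamma_{\omega_0}$ onto $\Gamma_{\omega^*}$ before splitting (so that $\Upsilon_{\omega^*}$ appears without extra connecting segments), verifying analyticity and decay of the integrands along the way, and keeping track that every constant depends only on $c_0,\omega_0,\mathcal M_0$ and not on $\alpha$, $\theta$, $\tau$, or $j$; the $j=1$ case, where $e^{(j-1)z}\equiv1$ provides no exponential gain, is a minor point handled by the boundedness of $\Upsilon_{\omega^*}$.
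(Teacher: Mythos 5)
Your proposal is correct and follows essentially the same route as the paper: after rescaling $E_\alpha(t_j)$ onto the contour $\Gamma_{\omega^*}$, you split the difference into the tail integral over $\Gamma_{\omega^*}\setminus\Upsilon_{\omega^*}$ (controlled by \cref{eq:A-Y-X-Y}, resp.\ \cref{eq:A-X*-Y-Y}, with exponential decay in $j$) and the integral over $\Upsilon_{\omega^*}$ of $e^{(j-1)z}\bigl(e^{z}R(\tau^{-\alpha}z^\alpha,\mathcal A)-R(\tau^{-\alpha}e^{-z}\psi_\alpha(z),\mathcal A)\bigr)$ controlled by \cref{eq:A-X-Y-diff}, resp.\ \cref{eq:A-X*-Y-diff}, which is exactly the paper's decomposition into $\mathbb I_1+\mathbb I_2$. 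Your additional care about justifying the deformation from $\Gamma_{\omega_0}$ to $\Gamma_{\omega^*}$ and the $j=1$ case only makes explicit steps the paper leaves implicit.
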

\begin{proof} 
  For each $ 1 \leqslant j \leqslant J $, inserting $ t = t_j $ into
  \cref{eq:E-def} yields
  \[ 
    E_\alpha(t_j) = \frac1{2\pi i}
    \int_{\Gamma_{\omega^*}} e^{t_jz} R(z^\alpha,\mathcal A) \, \mathrm{d}z =
    \frac{\tau^{-1}}{2\pi i} \int_{\Gamma_{\omega^*}} e^{jz}
    R(\tau^{-\alpha}z^\alpha, \mathcal A) \, \mathrm{d}z,
  \]
  and so we conclude from \cref{eq:calE-def,eq:calEj} that
  \[
    E_\alpha(t_j) - \mathcal E_\alpha(t_j-) = \mathbb I_1 + \mathbb I_2,
  \]
  where
  \begin{align*}
    \mathbb I_1 &:= \frac{\tau^{-1}}{2\pi i}
    \int_{\Gamma_{\omega^*}\setminus\Upsilon_{\omega^*}} e^{jz}
    R(\tau^{-\alpha}z^\alpha,\mathcal A) \, \mathrm{d}z, \\ \mathbb I_2 &:=
    \frac{\tau^{-1}}{2\pi i} \int_{\Upsilon_{\omega^*}} e^{(j-1)z} \big( e^z
    R(\tau^{-\alpha}z^\alpha,\mathcal A) -
    R(\tau^{-\alpha}e^{-z}\psi_\alpha(z),\mathcal A) \big) \, \mathrm{d}z.
  \end{align*}
  A straightforward computation gives
  \begin{align*} 
    & \nm{\mathbb I_1}_{
      \mathcal L(Y,[X,Y]_\theta)
    } \\
    \leqslant{} & C_{c_0,\mathcal M_0} \tau^{-1}
    \int_{\pi/\sin\omega^*}^\infty e^{j\cos\omega^* r}
    \big( 1+(\tau^{-\alpha}r^\alpha)^{\theta} \big)^{-1} \,
    \mathrm{d}r \quad\text{(by \cref{eq:A-Y-X-Y})} \\
    <{} & C_{c_0,\mathcal M_0} \tau^{\theta\alpha-1}
    \int_{\pi/\sin\omega^*}^\infty e^{j\cos\omega^* r}
    r^{-\theta\alpha} \, \mathrm{d}r \\
    <{} & C_{c_0,\mathcal M_0} \tau^{\theta\alpha-1}
    \int_{\pi/\sin\omega^*}^\infty e^{j\cos\omega^* r} \, \mathrm{d}r \\
    <{} & C_{c_0,\omega_0,\mathcal M_0} \tau^{\theta\alpha-1}
    j^{-1} e^{j\pi\cot\omega^*}
  \end{align*} and
  \begin{align*} 
    & \nm{\mathbb I_2}_{
      \mathcal L(Y,[X,Y]_\theta)
    } \\ <{} &
    C_{c_0,\omega_0,\mathcal M_0} \tau^{-1}
    \int_0^{\pi/\sin\omega^*} e^{(j-1)\cos\omega^* r}
    r(1 + (\tau^{-\alpha} r^\alpha)^{\theta})^{-1} \, \mathrm{d}r
    \quad\text{(by \cref{eq:A-X-Y-diff})} \\
    <{} & C_{c_0,\omega_0,\mathcal M_0}
    \tau^{\theta\alpha-1} \int_0^{\pi/\sin\omega^*}
    e^{(j-1)\cos\omega^* r} r^{1-\theta\alpha} \, \mathrm{d}r \\
    <{} & C_{c_0,\omega_0,\mathcal M_0}
    \tau^{\theta\alpha-1} j^{\theta\alpha-2}.
  \end{align*}
  Together the above estimates of $ \mathbb I_1 $ and $ \mathbb I_2 $ yields
  \cref{eq:E-calE-X*-Y-X-Y}. Since \cref{eq:E-calE-X*-Y} can be proved
  analogously by \cref{eq:A-X*-Y-Y,eq:A-X*-Y-diff}, this completes the proof.
\end{proof}

\begin{lemma}
  \label{lem:E-calE-l1}
  For any $ 0 < \alpha < 1 $ and $ 0 < \theta \leqslant 1 $,
  \begin{align}
    \nm{E_\alpha-\mathcal E_{\alpha}}_{
      L^1(0,T;\mathcal L(Y,[X,Y]_\theta))
    } &
    \leqslant C_{c_0,\omega_0,\mathcal M_0}
    \Big(
      \frac1{\theta\alpha} +
      \frac{1-J^{\theta\alpha-1}}{1-\theta\alpha}
    \Big) \tau^{\theta\alpha} , \label{eq:int-E-calE-high} \\
    \nm{E_\alpha-\mathcal E_{\alpha}}_{
      L^1(0,T;\mathcal L([X^*,Y]_\theta,Y))
    } & \leqslant
    C_{c_0,\omega_0,\mathcal M_0} \Big(
      \frac1{\theta\alpha} +
      \frac{1-J^{\theta\alpha-1}}{1-\theta\alpha}
    \Big) \tau^{\theta\alpha}. \label{eq:int-E-calE}
  \end{align}
\end{lemma}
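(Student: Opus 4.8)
The plan is to split $\int_0^T\nm{(E_\alpha-\mathcal E_\alpha)(t)}\,\mathrm dt$ along the temporal grid, isolating the first cell $(0,t_1)$, and on the remaining cells to combine the nodal error estimate of \cref{lem:E-calE} with the smoothness of $E_\alpha$ between nodes (from \cref{lem:E-E*}), finishing by an integral comparison of the resulting $j^{\theta\alpha-2}$ sums. I will argue \cref{eq:int-E-calE-high}; \cref{eq:int-E-calE} follows line for line upon replacing \cref{eq:E-X-Y,eq:E'-X-Y,eq:E-calE-X*-Y-X-Y,eq:A-Y-X-Y} by \cref{eq:E-X*-Y,eq:E'-X*-Y,eq:E-calE-X*-Y,eq:A-X*-Y-Y}.

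On $(0,t_1)$ the function $\mathcal E_\alpha$ is the constant $\tau^{-1}\mathcal E_{\alpha,0}$. Comparing the $k=1$ instance of \cref{eq:L1} with \cref{eq:Stau-g} gives $\mathcal E_{\alpha,0}=R\big(\tau^{-\alpha}/\Gamma(2-\alpha),\mathcal A\big)$, so, since $\Gamma(2-\alpha)\le1$, \cref{eq:A-Y-X-Y} yields $\nm{\mathcal E_{\alpha,0}}_{\mathcal L(Y,[X,Y]_\theta)}\le C\tau^{\theta\alpha}$ and hence $\int_0^{t_1}\nm{\mathcal E_\alpha(t)}_{\mathcal L(Y,[X,Y]_\theta)}\,\mathrm dt=\nm{\mathcal E_{\alpha,0}}_{\mathcal L(Y,[X,Y]_\theta)}\le C\tau^{\theta\alpha}$; by \cref{eq:E-X-Y}, $\int_0^{t_1}\nm{E_\alpha(t)}_{\mathcal L(Y,[X,Y]_\theta)}\,\mathrm dt\le C\int_0^{t_1}t^{\theta\alpha-1}\,\mathrm dt=(C/\theta\alpha)\,\tau^{\theta\alpha}$. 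Thus the contribution of $(0,t_1)$ is at most $C(1/\theta\alpha+1)\tau^{\theta\alpha}$.

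On a cell $(t_{j-1},t_j)$ with $2\le j\le J$ one has $\mathcal E_\alpha(t)=\mathcal E_\alpha(t_j-)$, so $\nm{(E_\alpha-\mathcal E_\alpha)(t)}\le\nm{E_\alpha(t)-E_\alpha(t_j)}+\nm{E_\alpha(t_j)-\mathcal E_\alpha(t_j-)}$ in $\mathcal L(Y,[X,Y]_\theta)$. Writing $E_\alpha(t)-E_\alpha(t_j)=-\int_t^{t_j}E_\alpha'(s)\,\mathrm ds$ and using \cref{eq:E'-X-Y} together with $s\ge t_{j-1}$ gives $\nm{E_\alpha(t)-E_\alpha(t_j)}\le C(t_j-t)t_{j-1}^{\theta\alpha-2}$, hence $\int_{t_{j-1}}^{t_j}\nm{E_\alpha(t)-E_\alpha(t_j)}\,\mathrm dt\le C\tau^2t_{j-1}^{\theta\alpha-2}=C\tau^{\theta\alpha}(j-1)^{\theta\alpha-2}$; and \cref{eq:E-calE-X*-Y-X-Y} gives $\int_{t_{j-1}}^{t_j}\nm{E_\alpha(t_j)-\mathcal E_\alpha(t_j-)}\,\mathrm dt\le C\tau^{\theta\alpha}j^{\theta\alpha-2}$. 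Since $0<\theta\alpha<1$, the comparison $\sum_{j=2}^{J}j^{\theta\alpha-2}\le\int_1^{J}x^{\theta\alpha-2}\,\mathrm dx=\tfrac{1-J^{\theta\alpha-1}}{1-\theta\alpha}$ (and likewise $\sum_{j=1}^{J-1}j^{\theta\alpha-2}\le1+\tfrac{1-J^{\theta\alpha-1}}{1-\theta\alpha}$) bounds the sum over the cells $2\le j\le J$ by $C\big(1+\tfrac{1-J^{\theta\alpha-1}}{1-\theta\alpha}\big)\tau^{\theta\alpha}$. Adding the $(0,t_1)$ bound and absorbing the additive constants into $1/(\theta\alpha)$ (legitimate since $\theta\alpha\le1$) gives \cref{eq:int-E-calE-high}.

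The one delicate point is the first cell: estimating $\mathcal E_{\alpha,0}$ crudely from the contour representation \cref{eq:calEj} would cost an extra factor $1/(1-\theta\alpha)$, which is genuinely larger than the required $\big(1-J^{\theta\alpha-1}\big)/(1-\theta\alpha)$ when $\theta\alpha$ is close to $1$, so one must use the exact identity $\mathcal E_{\alpha,0}=R(\tau^{-\alpha}/\Gamma(2-\alpha),\mathcal A)$ and \cref{eq:A-Y-X-Y} to recover the sharp $\tau^{\theta\alpha}$; the rest is routine bookkeeping of the $j^{\theta\alpha-2}$ sums.
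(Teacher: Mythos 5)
Your proof is correct and follows essentially the same route as the paper: a triangle inequality through the nodal values $E_\alpha(t_j)$, the nodal error bound of \cref{lem:E-calE}, the derivative estimate \cref{eq:E'-X-Y} (resp.\ \cref{eq:E'-X*-Y}) between nodes, and an integral comparison of the $j^{\theta\alpha-2}$ sums. The only (minor) divergence is the first cell: rather than invoking the exact identity $\mathcal E_{\alpha,0}=R(\tau^{-\alpha}/\Gamma(2-\alpha),\mathcal A)$, the paper simply bounds $\nm{E_\alpha-E_\alpha(t_1)}_{L^1(0,t_1)}$ by $C\tau^{\theta\alpha}/(\theta\alpha)$ via \cref{eq:E-X-Y} and then absorbs the remaining piece $\tau\nm{E_\alpha(t_1)-\mathcal E_\alpha(t_1-)}$ into the $j=1$ term of the nodal sum, which sidesteps the delicate point you flag.
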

\begin{proof} 
  By \cref{eq:E-X-Y} we have
  \begin{equation} 
    \label{eq:E-Et1} \nm{E_\alpha-E_\alpha(t_1)}_{
      L^1(0,t_1;\mathcal L(Y,[X,Y]_\theta))
    } < C_{c_0,\omega_0,\mathcal M_0}
    \tau^{\theta\alpha} / (\theta \alpha),
  \end{equation}
  and a straightforward calculation gives
  \begin{small}
  \begin{align}
    & \sum_{j=2}^J \nm{ E_\alpha - E_\alpha(t_j) }_{
      L^1(t_{j-1},t_j;\mathcal L(Y,[X,Y]_\theta))
    } \notag \\
    \leqslant{} & \tau
    \nm{E_\alpha'}_{ L^1(t_1,T;\mathcal L(Y,[X,Y]_\theta)) } \leqslant
    C_{c_0,\omega_0,\mathcal M_0} \tau \int_{t_1}^T t^{\theta\alpha-2} \, \mathrm{d}t
    \quad \text{(by \cref{eq:E'-X-Y})} \notag \\
    ={} & C_{c_0,\omega_0,\mathcal M_0}
    \tau^{\theta\alpha} (1-J^{\theta\alpha-1}) / (1-\theta\alpha).
  \end{align}
  \end{small}
  It follows that
  \begin{small}
  \begin{align*}
    & \sum_{j=1}^J \nm{E_\alpha-E_\alpha(t_j)}_{
      L^1(t_{j-1},t_j;\mathcal L(Y,[X,Y]_\theta))
    } \\
    \leqslant{}& C_{c_0,\omega_0,\mathcal M_0} \Big(
      \frac1{\theta\alpha} +
      \frac{1-J^{\theta\alpha-1}}{1-\theta\alpha}
    \Big) \tau^{\theta\alpha}.
  \end{align*}
  \end{small}
  In addition, by \cref{eq:E-calE-X*-Y-X-Y},
  \begin{align*}
    \sum_{j=1}^J \tau \nm{E_\alpha(t_j) - \mathcal E_\alpha(t_j-)}_{ \mathcal
    L(Y,[X,Y]_\theta) } & \leqslant C_{c_0,\omega_0,\mathcal M_0}
    \tau^{\theta\alpha} \sum_{j=1}^J j^{\theta\alpha-2} \\ & \leqslant
    C_{c_0,\omega_0,\mathcal M_0} \tau^{\theta\alpha} (1-J^{\theta\alpha-1}) /
    (1-\theta\alpha).
  \end{align*}
  Consequently,
  \begin{align*}
    & \nm{E_\alpha-\mathcal E_\alpha}_{
      L^1(0,T;\mathcal L(Y,[X,Y]_\theta))
    } \\
    \leqslant{} & \sum_{j=1}^J
    \Big( \nm{E_\alpha-E_\alpha(t_j)}_{
      L^1(t_{j-1},t_j;\mathcal L(Y,[X,Y]_\theta))
    } + {} \\ &
    \qquad\qquad\qquad \tau \nm{
      E_\alpha(t_j)-\mathcal E_\alpha(t_j-)
    }_{
      L^1(t_{j-1},t_j;\mathcal L(Y,[X,Y]_\theta))
    } \Big) \\
    \leqslant{} & C_{c_0,\omega_0,\mathcal M_0}
    \Big(
      \frac1{\theta\alpha} +
      \frac{1-J^{\theta\alpha-1}}{1-\theta\alpha}
    \Big) \tau^{\theta\alpha},
  \end{align*}
  which proves \cref{eq:int-E-calE-high}. Since \cref{eq:int-E-calE} can be
  proved analogously by \cref{eq:E-X*-Y,eq:E'-X*-Y,eq:E-calE-X*-Y}, this
  completes the proof.
\end{proof}

Thirdly, we prove that \cref{eq:S-Stau-g} holds for $ 0 < \alpha < 1 $ and $ p=1
$.
\begin{lemma}
  \label{lem:conv-l1}
  Assume that $ 0 < \alpha < 1 $ and $ 0 < \theta \leqslant 1 $. For any $ g \in
  L^1(0,T;[X^*,Y]_\theta) $, we have
  \begin{equation}
    \label{eq:conv-l1}
    \nm{
      (\mathcal S_\alpha - \mathcal S_{\alpha,\tau})g
    }_{L^1(0,T;Y)} \leqslant
    C_{c_0,\omega_0,\mathcal M_0} \Big(
      \frac1{\theta\alpha} +
      \frac{1-J^{\theta\alpha-1}}{1-\theta\alpha}
    \Big) \tau^{\theta\alpha}
    \nm{g}_{L^1(0,T;[X^*,Y]_\theta)}.
  \end{equation}
\end{lemma}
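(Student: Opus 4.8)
The plan is to reduce the estimate to the $L^1$ bound on $E_\alpha-\mathcal E_\alpha$ already proved in \cref{lem:E-calE-l1}. Fix $g\in L^1(0,T;[X^*,Y]_\theta)$. By \cref{eq:Stau-g}, on each subinterval $(t_{j-1},t_j)$ the step function $\mathcal S_{\alpha,\tau}g$ equals the constant $(\mathcal S_{\alpha,\tau}g)(t_j-)=\int_0^{t_j}\mathcal E_\alpha(t_j-s)g(s)\,\mathrm ds$, whereas $(\mathcal S_\alpha g)(t)=\int_0^t E_\alpha(t-s)g(s)\,\mathrm ds$. For $t\in(t_{j-1},t_j)$ I would write
\[
  (\mathcal S_\alpha g)(t)-(\mathcal S_{\alpha,\tau}g)(t)=\mathrm I(t)+\mathrm{II}(t)-\mathrm{III}(t),
\]
where $\mathrm I(t):=\int_0^t(E_\alpha-\mathcal E_\alpha)(t-s)g(s)\,\mathrm ds$, $\mathrm{II}(t):=\int_0^t\big(\mathcal E_\alpha(t-s)-\mathcal E_\alpha(t_j-s)\big)g(s)\,\mathrm ds$ and $\mathrm{III}(t):=\int_t^{t_j}\mathcal E_\alpha(t_j-s)g(s)\,\mathrm ds$, and then estimate the three pieces separately in $L^1(0,T;Y)$.

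The term $\mathrm I$ is the convolution $(E_\alpha-\mathcal E_\alpha)*g$; Young's inequality for the scalar convolution of $t\mapsto\nm{(E_\alpha-\mathcal E_\alpha)(t)}_{\mathcal L([X^*,Y]_\theta,Y)}$ with $t\mapsto\nm{g(t)}_{[X^*,Y]_\theta}$, followed by \cref{eq:int-E-calE}, gives $\nm{\mathrm I}_{L^1(0,T;Y)}\leqslant C_{c_0,\omega_0,\mathcal M_0}\big(\tfrac1{\theta\alpha}+\tfrac{1-J^{\theta\alpha-1}}{1-\theta\alpha}\big)\tau^{\theta\alpha}\nm{g}_{L^1(0,T;[X^*,Y]_\theta)}$, which is exactly the right-hand side of \cref{eq:conv-l1}. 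For $\mathrm{III}$, the condition $s\in(t,t_j)$ forces $t_j-s\in(0,\tau)$, hence $\mathcal E_\alpha(t_j-s)=\tau^{-1}\mathcal E_{\alpha,0}$, and from \cref{eq:calEj} together with \cref{eq:0}, \cref{eq:psi>} and \cref{eq:A-X*-Y-Y} one obtains $\nm{\mathcal E_{\alpha,0}}_{\mathcal L([X^*,Y]_\theta,Y)}\lesssim\tau^{\theta\alpha}$ (with an extra $\ln J$ in the borderline case $\theta\alpha=1$); thus $\nm{\mathrm{III}(t)}_Y\lesssim\tau^{\theta\alpha-1}\int_t^{t_j}\nm{g(s)}_{[X^*,Y]_\theta}\,\mathrm ds$, and summing over $j$, integrating in $t$ and using $t_j-t\leqslant\tau$ together with Fubini yields $\nm{\mathrm{III}}_{L^1(0,T;Y)}\lesssim\tau^{\theta\alpha}\nm{g}_{L^1(0,T;[X^*,Y]_\theta)}$ (again up to $\ln J$ if $\theta\alpha=1$).

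The delicate term is $\mathrm{II}$. Since $t$ and $t_j$ lie in the same subinterval, for a.e.\ $s\in(0,t)$ one has $\mathcal E_\alpha(t-s)-\mathcal E_\alpha(t_j-s)\in\{0,\,\tau^{-1}(\mathcal E_{\alpha,k}-\mathcal E_{\alpha,k+1})\}$ with $k=\lfloor(t-s)/\tau\rfloor$; a Fubini argument then bounds $\nm{\mathrm{II}}_{L^1(0,T;Y)}$ by $\big(\sum_{k=0}^{J-1}\nm{\mathcal E_{\alpha,k}-\mathcal E_{\alpha,k+1}}_{\mathcal L([X^*,Y]_\theta,Y)}\big)\nm{g}_{L^1(0,T;[X^*,Y]_\theta)}$, so it remains to estimate the discrete increments of $\{\mathcal E_{\alpha,k}\}$. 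From $\mathcal E_{\alpha,k}-\mathcal E_{\alpha,k+1}=\tfrac1{2\pi i}\int_{\Upsilon_{\omega^*}}e^{kz}(1-e^z)R(\tau^{-\alpha}e^{-z}\psi_\alpha(z),\mathcal A)\,\mathrm dz$, the bound $|1-e^z|\lesssim|z|$ together with \cref{eq:psi>}, \cref{eq:A-X*-Y-Y} and the exponential decay of $e^{kz}$ on $\Upsilon_{\omega^*}$ give $\nm{\mathcal E_{\alpha,k}-\mathcal E_{\alpha,k+1}}_{\mathcal L}\lesssim\tau^{\theta\alpha}(1+k)^{\theta\alpha-2}$, whence the sum over $0\leqslant k\leqslant J-1$ is $\lesssim\tau^{\theta\alpha}$ when $\theta\alpha<1$ and $\lesssim\tau^{\theta\alpha}\ln J$ when $\theta\alpha=1$. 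This is precisely where the case distinction in \cref{eq:vareps} comes from, and it is the step I expect to require the most care; the remaining ingredients (Young's inequality, Fubini, and the elementary pointwise bounds on $\mathcal E_\alpha$) are routine. Collecting the three estimates and absorbing the $\mathcal O(1)$ factor $1\leqslant\tfrac1{\theta\alpha}$ into the main term proves \cref{eq:conv-l1}.
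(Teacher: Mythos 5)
Your proposal is correct, and the decomposition is algebraically consistent (one checks that $\mathrm I+\mathrm{II}-\mathrm{III}$ indeed equals $(\mathcal S_\alpha g)(t)-(\mathcal S_{\alpha,\tau}g)(t_j-)$ and that the latter equals $(\mathcal S_\alpha g)(t)-(\mathcal S_{\alpha,\tau}g)(t)$ on $(t_{j-1},t_j)$), but it follows a genuinely different route from the paper. The paper shifts the \emph{continuous} kernel by $\tau$, introduces an intermediate piecewise-constant function $G$ built from the point values $E_\alpha(t_j-t_k+\tau)$ applied to the cell averages of $g$, and only at the last step compares $G$ with $\mathcal S_{\alpha,\tau}g$ via the pointwise kernel comparison \cref{eq:E-calE-X*-Y}; all the required increment bounds are therefore placed on $E_\alpha$, for which \cref{eq:E-X*-Y,eq:E'-X*-Y} are already available. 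You instead put the kernel difference $E_\alpha-\mathcal E_\alpha$ into a single convolution term handled by Young's inequality and \cref{eq:int-E-calE} (which the paper proves anyway, but uses only for the $p=\infty$ case and for \cref{eq:S-Stau-vdelta-l1}), and you shift the remaining work onto the \emph{discrete} kernel: you need $\nm{\mathcal E_{\alpha,0}}_{\mathcal L([X^*,Y]_\theta,Y)}$ and the increments $\nm{\mathcal E_{\alpha,k}-\mathcal E_{\alpha,k+1}}_{\mathcal L([X^*,Y]_\theta,Y)}\lesssim\tau^{\theta\alpha}(1+k)^{\theta\alpha-2}$, neither of which is stated in the paper, though both follow from the contour representation \cref{eq:calEj} together with \cref{eq:0,eq:psi>,eq:A-X*-Y-Y} exactly as you sketch, by the same computations as for $\mathbb I_2$ in the proof of \cref{lem:E-calE}; the sum over $k$ is then controlled by \cref{eq:2000}, which reproduces the $\varepsilon(\alpha,\theta,J)$ factor. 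The net effect is that your argument makes the $p=1$ case structurally parallel to the paper's $p=\infty$ case (both resting on the $L^1$-in-time kernel-difference bound), at the price of two additional, but routine, estimates on $\{\mathcal E_{\alpha,k}\}$. One small point of care: your interim claims of the form ``$\lesssim\tau^{\theta\alpha}$'' for $\theta\alpha<1$ hide a constant that degenerates like $1/(1-\theta\alpha)$ as $\theta\alpha\to1$; this is harmless only because the final bound carries the factor $\tfrac1{\theta\alpha}+\tfrac{1-J^{\theta\alpha-1}}{1-\theta\alpha}$, so you should keep that factor explicit rather than absorb it into an implied constant.
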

\begin{proof} 
  {\it Step 1.} Let us prove
  \begin{equation}
    \label{eq:300}
    \begin{aligned}
      & \int_0^T \Nm{
        \int_0^t (E_\alpha(t-s) - E_\alpha(t-s+\tau))g(s) \, \mathrm{d}s
      }_Y \, \mathrm{d}t \\
      \leqslant{} &
      C_{c_0,\omega_0,\mathcal M_0}
      \left(
        \frac1{\theta\alpha} +
        \frac{1-J^{\theta\alpha-1}}{1-\theta\alpha}
      \right) \tau^{\theta\alpha}
      \nm{g}_{L^1(0,T;[X^*,Y]_\theta)}.
    \end{aligned}
  \end{equation}
  A straightforward computation gives
  \[
    \int_0^\tau \nm{
      E_\alpha(t) - E_\alpha(t+\tau)
    }_{\mathcal L([X^*,Y]_\theta,Y)} \, \mathrm{d}t
    \leqslant
    C_{c_0,\omega_0,\mathcal M_0}
    \tau^{\theta\alpha} / (\theta\alpha) \quad \text{(by \cref{eq:E-X*-Y})}
  \]
  and
  \begin{align*}
    & \int_\tau^T \nm{
      E_\alpha(t) - E_\alpha(t+\tau)
    }_{\mathcal L([X^*,Y]_\theta,Y)} \, \mathrm{d}t \\
    \leqslant{} &
    C_{c_0,\omega_0,\mathcal M_0} \tau
    \int_\tau^T t^{\theta\alpha-2} \, \mathrm{d}t
    \quad\text{(by \cref{eq:E'-X*-Y})} \\
    \leqslant{} & C_{c_0,\omega_0,\mathcal M_0}
    \tau^{\theta\alpha} (1-J^{\theta\alpha-1}) / (1-\theta\alpha).
  \end{align*}
  It follows that
  \[
    \int_0^T \nm{
      E_\alpha(t) - E_\alpha(t+\tau)
    }_{\mathcal L([X^*,Y]_\theta,Y)}
    \leqslant
    C_{c_0,\omega_0,\mathcal M_0}
    \left(
      \frac1{\theta\alpha} +
      \frac{1-J^{\theta\alpha-1}}{1-\theta\alpha}
    \right) \tau^{\theta\alpha}.
  \]
  Hence, \cref{eq:300} follows from the estimate
  \begin{align*} 
    & \int_0^T \Big\|
    \int_0^t \big(
      E_\alpha(t-s) - E_\alpha(t-s+\tau)
    \big) g(s) \, \mathrm{d}s \Big\|_Y \, \mathrm{d}t \\
    \leqslant{} &
    \int_0^T
    \int_0^t \nm{
      E_\alpha(t-s) - E_\alpha(t-s+\tau)
    }_{\mathcal L([X^*,Y]_\theta,Y)} \nm{g(s)}_{[X^*,Y]_\theta} \, \mathrm{d}s
    \, \mathrm{d}t \\
    ={} &
    \int_0^T \int_s^T \nm{
      E_\alpha(t-s) - E_\alpha(t-s+\tau)
    }_{\mathcal L([X^*,Y]_\theta,Y)}
    \nm{g(s)}_{[X^*,Y]_\theta} \, \mathrm{d}t \, \mathrm{d}s \\
    ={} &
    \int_0^T \int_0^{T-s} \nm{
      E_\alpha(r) - E_\alpha(r+\tau)
    }_{\mathcal L([X^*,Y]_\theta,Y)} \, \mathrm{d}r \,
    \nm{g(s)}_{[X^*,Y]_\theta}  \, \mathrm{d}s \\
    \leqslant{} &
    \nm{g}_{L^1(0,T;[X^*,Y]_\theta)}
    \int_0^T \nm{
      E_\alpha(r) - E_\alpha(r+\tau)
    }_{\mathcal L([X^*,Y]_\theta,Y)} \, \mathrm{d}s.
  \end{align*}

  {\it Step 2.} Let us prove that
  \begin{small}
  \begin{equation}
    \label{eq:301}
    \int_0^T \Big\|
    \int_0^t E_\alpha(t-s+\tau) g(s) - G(t)
    \Big\|_Y \, \mathrm{d}t \leqslant
    C_{c_0,\omega_0,\mathcal M_0}
    \frac{1-J^{\theta\alpha-1}}{1-\theta\alpha}
    \tau^{\theta\alpha}
    \nm{g}_{L^1(0,T;[X^*,Y]_\theta)},
  \end{equation}
  \end{small}
  where $ G \in W_\tau(Y) $ is defined by
  \begin{equation}
    \label{eq:G}
    G(t_j-) := \sum_{k=1}^j E_\alpha(t_j-t_k+\tau)
    \int_{t_{k-1}}^{t_k} g(s) \, \mathrm{d}s,
    \quad 1 \leqslant j \leqslant J.
  \end{equation}
  For any $ t_{j-1} < t < t_j $ with $ 1 \leqslant j \leqslant J $, by
  \cref{eq:E-X*-Y} we have
  \begin{align*}
    & \Big\|
    \int_{t_{j-1}}^t E_\alpha(t-s+\tau) g(s) \, \mathrm{d}s -
    E_\alpha(\tau) \int_{t_{j-1}}^{t_j} g(s) \, \mathrm{d}s
    \Big\|_Y \\
    \leqslant{} & C_{c_0,\omega_0,\mathcal M_0}
    \tau^{\theta\alpha-1} \nm{g}_{L^1(t_{j-1},t_j;[X^*,Y]_\theta)},
  \end{align*}
  and by \cref{eq:E'-X*-Y} we have
  \begin{align*}
    & \Big\|
    \sum_{k=1}^{j-1} \int_{t_{k-1}}^{t_k} \big(
      E_\alpha(t-s+\tau) -E_\alpha(t_j-t_k+\tau)
    \big) g(s) \, \mathrm{d}s
    \Big\|_Y \\
    \leqslant{} & C_{c_0,\omega_0,\mathcal M_0}
    \sum_{k=1}^{j-1} \tau(t_j-t_k+\tau)^{\theta\alpha-2}
    \nm{g}_{L^1(t_{k-1},t_k;[X^*,Y]_\theta)} \\
    ={} & C_{c_0,\omega_0,\mathcal M_0}
    \tau^{\theta\alpha-1} \sum_{k=1}^{j-1}(j-k+1)^{\theta\alpha-2}
    \nm{g}_{L^1(t_{k-1},t_k;[X^*,Y]_\theta)}.
  \end{align*}
  Hence, for each $ t_{j-1} < t < t_j $ with $ 1 \leqslant j \leqslant J $,
  \begin{align*}
    & \Big\|
    \int_0^t E_\alpha(t-s+\tau) g(s) \, \mathrm{d}s -G(t_j-)
    \Big\|_{Y} \\
    \leqslant{} &
    \Big\|
    \int_{t_{j-1}}^t E_\alpha(t-s+\tau) g(s) \, \mathrm{d}s -
    E_\alpha(\tau) \int_{t_{j-1}}^{t_j} g(s) \, \mathrm{d}s
    \Big\|_Y + \\
    & \quad{} + \Nm{
      \sum_{k=1}^{j-1} \int_{t_{k-1}}^{t_k} \big(
        E_\alpha(t-s+\tau)-E_\alpha(t_j-t_k+\tau)
      \big) g(s) \, \mathrm{d}s
    }_Y \quad\text{(by \cref{eq:G})} \\
    \leqslant{} & C_{c_0,\omega_0,\mathcal M_0}
    \tau^{\theta\alpha-1}
    \sum_{k=1}^j (j-k+1)^{\theta\alpha-2}
    \nm{g}_{L^1(t_{k-1},t_k;[X^*,Y]_\theta)}.
  \end{align*}
  It follows that, for each $ 1 \leqslant j \leqslant J $,
  \begin{align*} 
    & \int_{t_{j-1}}^{t_j} \Big\|
    \int_0^t E_\alpha(t+\tau-s) g(s) \, \mathrm{d}s -
    G(t)
    \Big\|_Y \, \mathrm{d}t \\
    \leqslant{} & C_{c_0,\omega_0,\mathcal M_0}
    \tau^{\theta\alpha} \sum_{k=1}^j (j-k+1)^{\theta\alpha-2}
    \nm{g}_{L^1(t_{k-1},t_k;[X^*,Y]_\theta)}.
  \end{align*}
  Therefore,
  \begin{align*}
    & \int_0^T \Big\|
    \int_0^t E_\alpha(t+\tau-s) g(s) \, \mathrm{d}s - G(t)
    \Big\|_Y \, \mathrm{d}t \\
    \leqslant{} & C_{c_0,\omega_0,\mathcal M_0}
    \tau^{\theta\alpha} \sum_{j=1}^J
    \sum_{k=1}^j (j-k+1)^{\theta\alpha-2}
    \nm{g}_{L^1(t_{k-1},t_k;[X^*,Y]_\theta)} \\
    ={} & C_{c_0,\omega_0,\mathcal M_0}
    \tau^{\theta\alpha} \sum_{k=1}^J
    \sum_{j=k}^J (j-k+1)^{\theta\alpha-2}
    \nm{g}_{L^1(t_{k-1},t_k;[X^*,Y]_\theta)} \\
    \leqslant{} & C_{c_0,\omega_0,\mathcal M_0}
    \tau^{\theta\alpha} \nm{g}_{L^1(0,T;[X^*,Y]_\theta)}
    \sum_{m=1}^J m^{\theta\alpha-2}.
  \end{align*}
  The desired estimate \cref{eq:301} then follows from the simple inequality
  \begin{equation}
    \label{eq:2000}
    \sum_{m=1}^J m^{\theta\alpha-2} <
    1 + \frac{1-J^{\theta\alpha-1}}{1-\theta\alpha}.
  \end{equation}

  {\it Step 3.} Let us prove that
  \begin{equation}
    \label{eq:302}
    \int_0^T \nm{
      (\mathcal S_{\alpha,\tau} g)(t) - G(t)
    }_Y \, \mathrm{d}t \leqslant
    C_{c_0,\omega_0,\mathcal M_0}
    \frac{1-J^{\theta\alpha-1}}{1-\theta\alpha}
    \tau^{\theta\alpha}
    \nm{g}_{L^1(0,T;[X^*,Y]_\theta)}.
  \end{equation}
  Noting that $ \mathcal S_{\alpha,\tau}g $ and $ G $ are piecewise constant, we
  have
  \begin{small}
  \begin{align*}
    & \int_0^T \Nm{
      (\mathcal S_{\alpha,\tau}g)(t) - G(t)
    }_Y \, \mathrm{d}t \\
    ={} &
    \sum_{j=1}^J \tau \Nm{
      (\mathcal S_{\alpha,\tau}g)(t_j-) - G(t_j-)
    }_Y \\
    \leqslant{} &
    \sum_{j=1}^J \tau \Big\|
      \sum_{k=1}^j \big(
        \mathcal E_\alpha((t_j\!-\!t_k\!+\!\tau)-) -
        E_\alpha(t_j\!-\!t_k\!+\!\tau)
      \big) \int_{t_{k-1}}^{t_k} g(s) \, \mathrm{d}s
    \Big\|_Y \,\text{(by \cref{eq:Stau-g,eq:G})} \\
    \leqslant{} &
    \sum_{j=1}^J \tau \sum_{k=1}^j
    \nm{\mathcal E_\alpha((t_j-t_k+\tau)-) -
    E_\alpha(t_j-t_k+\tau)}_{
      \mathcal L([X^*,Y]_\theta,Y)
    } \int_{t_{k-1}}^{t_k} \nm{g(s)}_{[X^*,Y]_\theta} \, \mathrm{d}s \\
    = & \tau
    \sum_{k=1}^J \sum_{j=k}^J
    \nm{\mathcal E_\alpha((t_j-t_k+\tau)-) -
    E_\alpha(t_j-t_k+\tau)}_{
      \mathcal L([X^*,Y]_\theta,Y)
    } \int_{t_{k-1}}^{t_k} \nm{g(s)}_{[X^*,Y]_\theta} \, \mathrm{d}s \\
    \leqslant{} &
    \tau \nm{g}_{L^1(0,T;[X^*,Y]_\theta)}
    \sum_{m=1}^J \nm{
      \mathcal E_\alpha(t_m-) - E(t_m)
    }_{\mathcal L([X^*,Y]_\theta,Y)} \\
    \leqslant{} & C_{c_0,\omega_0,\mathcal M_0}
    \tau^{\theta\alpha} \nm{g}_{L^1(0,T;[X^*,Y]_\theta)}
    \sum_{m=1}^J m^{\theta\alpha-2} \quad\text{(by \cref{eq:E-calE-X*-Y})}.
  \end{align*}
  \end{small}
  Hence, \cref{eq:302} follows from \cref{eq:2000}.

  {\it Step 4.} By \cref{eq:Sg-l1} we have
  \begin{align*}
    & \nm{
      (\mathcal S_\alpha - \mathcal S_{\alpha,\tau})g
    }_{L^1(0,T;Y)} \\
    \leqslant{} &
    \int_0^T \Big\|
    \int_0^t (E_\alpha(t-s) - E_\alpha(t-s+\tau))g(s) \, \mathrm{d}s
    \Big\|_Y \, \mathrm{d}t \\
    & \quad {} +
    \int_0^T \Big\|
      \int_0^t E_\alpha(t-s+\tau) g(s) \, \mathrm{d}s - G(t)
    \|_Y \, \mathrm{d}t \\
    & \quad {} +
    \int_0^T \Nm{
      (\mathcal S_{\alpha,\tau}g)(t) - G(t)
    }_Y \, \mathrm{d}t.
  \end{align*}
  Therefore, combining \cref{eq:300,eq:301,eq:302} proves \cref{eq:conv-l1} and
  thus concludes the proof.
\end{proof}

Fourthly, let us prove that \cref{eq:S-Stau-g} holds for $ 0 < \alpha < 1 $ and
$ p=\infty $.
\begin{lemma}
  \label{lem:sj}
  Assume that $ 0 < \alpha < 1 $ and $ 0 < \theta \leqslant 1 $. For any $ g \in
  L^\infty(0,T;[X^*,Y]_\theta) $ we have
  \begin{equation}
    \label{eq:sj}
    \begin{aligned}
      & \nm{
        (\mathcal S_\alpha - \mathcal S_{\alpha,\tau})g
      }_{L^\infty(0,T;Y)} \\
      \leqslant{} &
      C_{c_0,\omega_0,\mathcal M_0}
      \left(
        \frac1{\theta\alpha} + \frac{1-J^{\theta\alpha-1}}{1-\theta\alpha}
      \right) \tau^{\theta\alpha}
      \nm{g}_{L^\infty(0,T;[X^*,Y]_\theta)}.
    \end{aligned}
  \end{equation}
\end{lemma}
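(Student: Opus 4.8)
The plan is to run through the four steps of the proof of \cref{lem:conv-l1} again, but now keeping the $L^\infty$ norm of $g$ outside every integral in place of the Fubini-type rearrangements used there; the kernel estimates this leaves behind are exactly the ones already established. Since $g \in L^\infty(0,T;[X^*,Y]_\theta) \hookrightarrow L^q(0,T;[X^*,Y]_\theta)$ for $q$ with $q > 1/(\theta\alpha)$, \cref{eq:Sg-C} shows $\mathcal S_\alpha g \in C([0,T];Y)$, while $\mathcal S_{\alpha,\tau}g$ is piecewise constant; hence $\nm{\cdot}_{L^\infty(0,T;Y)}$ is an honest pointwise supremum and it is enough to bound $\nm{(\mathcal S_\alpha g)(t) - (\mathcal S_{\alpha,\tau}g)(t)}_Y$ uniformly in $t \in (t_{j-1},t_j)$, $1 \leqslant j \leqslant J$. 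With $G \in W_\tau(Y)$ as in \cref{eq:G} and $(\mathcal S_\alpha g)(t) = \int_0^t E_\alpha(t-s)g(s)\,\mathrm{d}s$ from \cref{eq:Sg-l1}, I would use the splitting
\[
  (\mathcal S_\alpha g)(t) - (\mathcal S_{\alpha,\tau}g)(t) = \mathrm I(t) + \mathrm{II}(t) + \mathrm{III}(t),
\]
with $\mathrm I(t) := \int_0^t \bigl(E_\alpha(t-s) - E_\alpha(t-s+\tau)\bigr) g(s)\,\mathrm{d}s$, $\mathrm{II}(t) := \int_0^t E_\alpha(t-s+\tau) g(s)\,\mathrm{d}s - G(t)$, and $\mathrm{III}(t) := G(t) - (\mathcal S_{\alpha,\tau}g)(t)$.

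For $\mathrm I(t)$, pulling the $Y$-norm inside the integral, extracting $\nm{g}_{L^\infty(0,T;[X^*,Y]_\theta)}$ and substituting $r = t-s$ gives $\nm{\mathrm I(t)}_Y \leqslant \nm{g}_{L^\infty(0,T;[X^*,Y]_\theta)} \int_0^T \nm{E_\alpha(r) - E_\alpha(r+\tau)}_{\mathcal L([X^*,Y]_\theta,Y)}\,\mathrm{d}r$, and the last integral was already bounded by $C_{c_0,\omega_0,\mathcal M_0}\bigl(1/(\theta\alpha) + (1-J^{\theta\alpha-1})/(1-\theta\alpha)\bigr)\tau^{\theta\alpha}$ in Step~1 of the proof of \cref{lem:conv-l1} (split at $r=\tau$, using \cref{eq:E-X*-Y,eq:E'-X*-Y}). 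For $\mathrm{II}(t)$, the pointwise estimate derived in Step~2 of that proof says $\nm{\mathrm{II}(t)}_Y \leqslant C_{c_0,\omega_0,\mathcal M_0}\tau^{\theta\alpha-1}\sum_{k=1}^j (j-k+1)^{\theta\alpha-2}\nm{g}_{L^1(t_{k-1},t_k;[X^*,Y]_\theta)}$; bounding $\nm{g}_{L^1(t_{k-1},t_k;[X^*,Y]_\theta)} \leqslant \tau\nm{g}_{L^\infty(0,T;[X^*,Y]_\theta)}$ and $\sum_{k=1}^j (j-k+1)^{\theta\alpha-2} \leqslant \sum_{m=1}^J m^{\theta\alpha-2}$, inequality \cref{eq:2000} then gives $\nm{\mathrm{II}(t)}_Y \leqslant C_{c_0,\omega_0,\mathcal M_0}\bigl(1 + (1-J^{\theta\alpha-1})/(1-\theta\alpha)\bigr)\tau^{\theta\alpha}\nm{g}_{L^\infty(0,T;[X^*,Y]_\theta)}$. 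For $\mathrm{III}(t)$, since $G$ and $\mathcal S_{\alpha,\tau}g$ both equal their left limits at $t_j$ on $(t_{j-1},t_j)$, combining \cref{eq:Stau-g} with \cref{eq:G} yields $\mathrm{III}(t) = \sum_{k=1}^j \bigl(E_\alpha(t_{j-k+1}) - \mathcal E_\alpha(t_{j-k+1}-)\bigr)\int_{t_{k-1}}^{t_k} g(s)\,\mathrm{d}s$, so \cref{eq:E-calE-X*-Y} and \cref{eq:2000} give $\nm{\mathrm{III}(t)}_Y \leqslant \tau\nm{g}_{L^\infty(0,T;[X^*,Y]_\theta)}\sum_{m=1}^J \nm{E_\alpha(t_m) - \mathcal E_\alpha(t_m-)}_{\mathcal L([X^*,Y]_\theta,Y)} \leqslant C_{c_0,\omega_0,\mathcal M_0}\bigl(1 + (1-J^{\theta\alpha-1})/(1-\theta\alpha)\bigr)\tau^{\theta\alpha}\nm{g}_{L^\infty(0,T;[X^*,Y]_\theta)}$.

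Summing the three bounds and taking the supremum over $t$ gives \cref{eq:sj}, the additive constants $1$ being absorbed into $1/(\theta\alpha)$ since $0 < \theta\alpha \leqslant 1$ forces $1/(\theta\alpha) \geqslant 1$. I do not expect any genuine obstacle: all the delicate analysis --- the regularized kernel $\mathcal E_\alpha$ and its error bound \cref{eq:E-calE-X*-Y}, the quadrature comparison of Step~2, and the splitting of the shifted kernel --- has already been done for the $L^1$ case, and here one merely recombines the same pieces. The only point requiring a sentence of care is the observation that $\mathcal S_\alpha g$ is continuous into $Y$ (so that the pointwise decomposition above legitimately controls the $L^\infty(0,T;Y)$ norm), which is precisely \cref{eq:Sg-C}.
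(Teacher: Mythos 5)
Your argument is correct, but it is organized differently from the paper's. The paper proves \cref{eq:sj} in two stages: first it bounds the error at the nodes, writing $(\mathcal S_\alpha g)(t_j)-(\mathcal S_{\alpha,\tau}g)(t_j-)=\int_0^{t_j}(E_\alpha-\mathcal E_\alpha)(t_j-t)g(t)\,\mathrm{d}t$ and invoking the Young-type bound $\nm{E_\alpha-\mathcal E_\alpha}_{L^1(0,T;\mathcal L([X^*,Y]_\theta,Y))}\nm{g}_{L^\infty(0,T;[X^*,Y]_\theta)}$ together with \cref{eq:int-E-calE}; it then separately controls the oscillation $\max_j\nm{\mathcal S_\alpha g-(\mathcal S_\alpha g)(t_j)}_{L^\infty(t_{j-1},t_j;Y)}$ via \cref{eq:E-X*-Y,eq:E'-X*-Y,eq:Sg-l1}, referring to Diethelm for the technique. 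You instead reuse the three-term splitting $\mathrm I+\mathrm{II}+\mathrm{III}$ from the proof of \cref{lem:conv-l1} and bound each term pointwise in $t$, pulling $\nm{g}_{L^\infty(0,T;[X^*,Y]_\theta)}$ out of the convolution: term $\mathrm I$ reduces to the shifted-kernel integral already estimated in Step~1 there, term $\mathrm{II}$ to the pointwise quadrature bound of Step~2 combined with \cref{eq:2000}, and term $\mathrm{III}$ to \cref{eq:E-calE-X*-Y}. Each of these steps checks out (including the identification $t_j-t_k+\tau=t_{j-k+1}$ and the absorption of the additive constants into $1/(\theta\alpha)\geqslant 1$), and the appeal to \cref{eq:Sg-C} to justify working with pointwise values is the right remark. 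What your route buys is self-containedness: because terms $\mathrm I$ and $\mathrm{II}$ are estimated uniformly on each subinterval, the within-interval oscillation of $\mathcal S_\alpha g$ is absorbed automatically and no separate modulus-of-continuity argument (the step the paper delegates to the literature) is needed. The paper's route is shorter on the page because it recycles \cref{lem:E-calE-l1} wholesale, but both rest on the same kernel estimates.
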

\begin{proof} 
  By virtue of \cref{eq:Sg-l1,eq:Stau-g} we have that
  \begin{align*} 
    & \max_{1
    \leqslant j \leqslant J} \nm{
      (\mathcal S_\alpha g)(t_j)-(\mathcal S_{\alpha,\tau} g)(t_j-)
    }_Y \\
    \leqslant{} &
    \nm{E_\alpha-\mathcal E_\alpha}_{L^1(0,T;\mathcal L([X^*,Y]_\theta,Y))}
    \nm{g}_{L^\infty(0,T;[X^*,Y]_\theta)},
  \end{align*}
  so that \cref{eq:int-E-calE} implies that
  \begin{align*}
    & \max_{1 \leqslant j \leqslant J}
    \nm{
      (\mathcal S_\alpha g)(t_j)-(\mathcal S_{\alpha,\tau} g)(t_j-)
    }_Y \\
    \leqslant{} &
    C_{c_0,\omega_0,\mathcal M_0} \left(
      \frac1{\theta\alpha} +
      \frac{1-J^{\theta\alpha-1}}{1-\theta\alpha}
    \right) \nm{g}_{L^\infty(0,T;[X^*,Y]_\theta)}.
  \end{align*}
  It remains therefore to prove that
  \begin{align*}
    & \max_{1 \leqslant j \leqslant J}
    \nm{
      \mathcal S_\alpha g - (\mathcal S_\alpha g)(t_j)
    }_{L^\infty(t_{j-1},t_j;Y)} \\
    \leqslant{} &
    C_{c_0,\omega_0,\mathcal M_0}
    \left(
      \frac1{\theta\alpha} + \frac{1-J^{\theta\alpha-1}}{1-\theta\alpha}
    \right) \tau^{\theta\alpha}
    \nm{g}_{L^\infty(0,T;[X^*,Y]_\theta)}.
  \end{align*}
  But this is easily derived by \cref{eq:E-X*-Y,eq:E'-X*-Y,eq:Sg-l1}; see the
  proof of \cite[Thoerem 2.6]{Diethelm2010} for the relevant techniques. This
  completes the proof.
\end{proof}

Finally, we are in a position to conclude the proof of \cref{lem:conv-Stau} as
follows.

\medskip\noindent{\bf Proof of \cref{lem:conv-Stau}.} Let us first prove
  \cref{eq:S-Stau-g}. By \cref{lem:conv-l1,lem:sj} we have that
  \cref{eq:S-Stau-g} holds for all $ 0 < \alpha < 1 $ and $ p \in \{1,\infty\}
  $, so that passing to the limit $ \alpha \to {1} $ yields, by
  \cref{lem:S-to-1,eq:Stau-to-1}, that \cref{eq:S-Stau-g} holds for all $ 0 <
  \alpha \leqslant 1 $ and $ p \in \{1,\infty\} $.

  Then let us prove \cref{eq:S-Stau-vdelta-l1}. Assume that $ 0 < \alpha < 1 $.
  Combining \cref{eq:Stau-g,eq:delta_0} gives that
  \[
    (\mathcal S_{\alpha,\tau} (v\widehat\delta_0))(t_j-) =
    \mathcal E_\alpha(t_j-) v, \quad 1 \leqslant j
    \leqslant J,
  \]
  and so \cref{eq:Sdelta} implies
  \begin{align*}
    & \nm{
      \mathcal S_\alpha(v\delta_0) -
      \mathcal S_{\alpha,\tau}(v\widehat\delta_0)
    }_{L^1(0,T;[X,Y]_\theta)} \\
    \leqslant{} &
    \nm{E_\alpha-\mathcal E_\alpha}_{
      L^1(0,T;\mathcal L(Y,[X,Y]_\theta))
    } \nm{v}_{Y}.
  \end{align*}
  Therefore, \cref{eq:int-E-calE-high} proves that \cref{eq:S-Stau-vdelta-l1}
  holds for each $ 0 < \alpha < 1 $. A simple modification of the proof of
  \cref{eq:lbj} gives
  \[
    \lim_{\alpha \to {1-}} \nm{E_\alpha - E_1}_{
      L^1(0,T;\mathcal L(Y,[X,Y]_\theta))
    } = 0,
  \]
  so that \cref{eq:Sdelta} implies
  \[
    \lim_{\alpha \to {1-}}
    \nm{
      \mathcal S_\alpha(v\delta_0) -
      \mathcal S_1(v\delta_0)
    }_{L^1(0,T;[X,Y]_\theta)} = 0.
  \]
  Moreover, \cref{eq:Stau-to-1} yields
  \[
    \lim_{\alpha \to {1-}}
    \nm{
      \mathcal S_{\alpha,\tau}(v\widehat\delta_0) -
      \mathcal S_{1,\tau}(v\widehat\delta_0)
    }_{L^1(0,T;[X,Y]_\theta)} = 0.
  \]
  Therefore, passing to the limit $ \alpha \to {1-} $ in
  \cref{eq:S-Stau-vdelta-l1} yields that \cref{eq:S-Stau-vdelta-l1} holds with $
  \alpha=1 $. This completes the proof of \cref{lem:conv-Stau}.
\hfill\ensuremath{\blacksquare}

\section{A Dirichlet boundary control problem}
\label{sec:appl}
Assume that $ 0 < \alpha \leqslant 1 $, $ 0 < T < \infty $, and $ \Omega \subset
\mathbb R^d $ ($d=2,3$) is a bounded convex polygonal domain with boundary $
\partial\Omega $. Define
\[
  U_{\text{ad}} := \left\{
    v \in L^2(0,T;L^2(\partial\Omega)):\
    u_* \leqslant v(x,t) \leqslant u^*
    \text{ for a.e.~} (x,t) \in \partial\Omega \times (0,T)
  \right\},
\]
where $ u_* $ and $ u^* $ are two given constants. For any $ y \in
C((0,T];L^2(\Omega)) $ and $ u \in L^2(0,T;L^2(\partial\Omega)) $, define
\begin{equation}
  J_\alpha(y,u) := \frac12 \nm{y(T) - y_d}_{L^2(\Omega)}^2 +
  \frac\nu2 \nm{u}_{L^2(0,T;L^2(\partial\Omega))}^2,
\end{equation}
where $ y_d \in L^2(\Omega) $ and $ \nu > 0 $ is a regularization parameter. We
are concerned with the following optimal Dirichlet boundary control problem:
\begin{equation}
  \label{eq:model}
  \boxed{
    \begin{aligned}
      \text{Minimize } J_\alpha(y,u)
      \text{ subject to $ u \in U_\text{ad} $ and} \\
      \begin{cases}
        (\partial_{0+}^\alpha - \Delta) y = 0
        & \text{ in  } \Omega \times (0,T) \\
        \qquad\qquad\,\, y = u
        & \text{ on  } \partial\Omega \times (0,T) \\
        \qquad\,\, y(\cdot,0) = 0
        & \text{ in } \Omega.
      \end{cases}
    \end{aligned}
  }
\end{equation}
Here, $ \partial_{0+}^\alpha $, a fractional partial differential operator, is
the scalar-valued version of $ \D_{0+}^\alpha $ with respect to the time
variable $ t $.

To apply the theory in the previous section to problem \cref{eq:model}, we will
use the following settings:
\begin{align*}
  & \mathcal A := \Delta; \quad \mathcal A^* := \Delta; \quad
  X := H_0^1(\Omega) \cap H^2(\Omega); \quad Y := L^2(\Omega);
  \quad Z := L^2(\partial\Omega);
\end{align*}
the operator $ \mathcal R_{\theta_0}: Z \to [X^*,Y]_{\theta_0} $, $ 0 < \theta_0
< 1/4 $, is defined by that
\begin{equation} 
  \dual{\mathcal R_{\theta_0} w, v}_{[X,Y]_{\theta_0}} :=
  -\dual{w, \partial_{\bm n} v}_{\partial\Omega}
\end{equation}
for all $ w \in Z $ and $ v \in [X,Y]_{\theta_0} $, where $ \partial_{\bm n} v $
is the outward normal derivative of $ v $ on $ \partial\Omega $. By the
well-known trace inequality that
\begin{equation}
  \label{eq:trace}
  \nm{v}_{Z}
  \leqslant \frac{C_\Omega}{\sqrt{\epsilon}}
  \nm{v}_{[X, Y]_{3/4-\epsilon}}
  \quad \text{ for all }
  v \in [X,Y]_{3/4-\epsilon} \text{ with }
  0 < \epsilon \leqslant 3/4,
\end{equation}
we readily conclude that, for any $ 0 < \theta_0 < 1/4 $,
\begin{equation}
  \label{eq:R-esti}
  \nm{\mathcal R_{\theta_0}}_{
    \mathcal L(Z,[X^*,Y]_{\theta_0})
  } \leqslant \frac{C_\Omega}{\sqrt{1-4\theta_0}}.
\end{equation}

\begin{remark} 
  For the techniques to prove \cref{eq:trace}, we refer the reader to \cite[§~3,
  Ch.~VI]{Stein1970}, \cite[Lemmas 16.1 and 23.1]{Tartar2007} and
  \cite[Corollary 4.37]{Lunardi2018}.
\end{remark}

Let $ u $ and $ y $ be defined in \cref{thm:basic-regu}, and let $ U $ and $ Y $
be defined in \cref{thm:regu-U}. A straightforward calculation gives, by
\cref{eq:conv,eq:R-esti}, that
\begin{align*}
  & \nm{(y-Y)(T-)}_Y + \sqrt\nu \nm{u-U}_{L^2(0,T;Z)} \\
  \leqslant{} &
  C_{u_*,u^*,T,\Omega} \left(
    \nm{y_d}_{Y} +
    (1-4\theta_0)^{-1/2}
  \right) (\theta_0\alpha)^{-1} \tau^{\theta_0\alpha/2}
\end{align*}
for all $ 0 < \theta_0 < 1/4 $. Assuming $ \tau < \exp(-4) $ and inserting $
\theta_0 = 1/4-1/\ln(1/\tau) $ into the above inequality, we then obtain
\begin{equation}
  \label{eq:model-conv}
  \begin{aligned}
    & \nm{
      (y-Y)(T-)
    }_{Y} +
    \sqrt\nu \nm{u-U}_{L^2(0,T;Z)} \\
    \leqslant{} &
    C_{u_*,u^*,T,\Omega} \, \alpha^{-1} \left(
      \nm{y_d}_{L^2(\Omega)} +
      \sqrt{\ln(1/\tau)}
    \right) \tau^{\alpha/8}.
  \end{aligned}
\end{equation}

\begin{remark}
  For the case $ \alpha=1 $,
  \cite{Lasiecka1980-optim,Lasiecka1980,Lasiecka1983,Lasiecka1984,Lasiecka1986}
  used
  \[
    -\mathcal A \int_0^t E_1(t-s) (-\mathcal A)^{-1}
    \mathcal R_{\theta_0} u(s) \, \mathrm{d}s,
    \quad 0 \leqslant t \leqslant T,
  \]
  as the solution of the state equation of problem \cref{eq:model} with $ u \in
  L^2(0,T;Z) $. It is evident that the above solution is exactly $ \mathcal S_1
  \mathcal R_{\theta_0} u $.
\end{remark}

It remains to prove that $ \mathcal S_\alpha \mathcal R_{\theta_0} u $ is a
sensible solution to the state equation of problem \cref{eq:model} for each $ u
\in L^2(0,T;Z) $. To this end, we first introduce the very weak solution concept
of the state equation, following the idea in \cite{Lions1972}. For any $ 0 <
\alpha <1 $ and $ g \in L^2(0,T;Y) $, there exists a unique $ w \in
{}^0H^\alpha(0,T;Y) \cap L^2(0,T;X) $ such that (cf.~\cite{Li2019SIAM,Luo2019})
\[
  (\D_{T-}^\alpha - \mathcal A) w = g
\]
and
\[
  \nm{w}_{{}^0H^\alpha(0,T;Y)} +
  \nm{w}_{L^2(0,T;X)}
  \leqslant C_\alpha \nm{g}_{L^2(0,T;L^2(\Omega))}.
\]
For $ \alpha=1 $ the above results are standard (cf.~\cite{Evans2010}). Hence,
by the method of transposition (cf.~\cite{Lions1972}), we define the very weak
solution $ y \in L^2(0,T;Y) $ to the state equation of problem \cref{eq:model}
with $ u \in L^2(0,T;Z) $ by that
\[
  \int_0^T \big(
    y, \, (\D_{T-}^\alpha - \mathcal A) \varphi
  \big)_Y \, \mathrm{d}t =
  -\dual{u, \partial_{\bm n} \varphi}_{\partial\Omega \times (0,T)}
\]
for all $ \varphi \in {}^0H^\alpha(0,T;Y) \cap L^2(0,T;X) $.

Then we will prove that, for any $ 0 < \theta_0 < 1/4 $ and $ u \in L^2(0,T;Z)
$, $ \mathcal S_\alpha \mathcal R_{\theta_0} u $ is identical to the very weak
solution to the state equation of problem \cref{eq:model}, and hence the
application of the theory in the previous section to problem \cref{eq:model} is
reasonable.
\begin{lemma} 
  Assume that $ 0 < \alpha \leqslant 1 $ and $ 0 < \theta_0 < 1/4 $. Then $
  \mathcal S_\alpha \mathcal R_{\theta_0} u $ is the very weak solution to the
  state equation of problem \cref{eq:model} for each $ u \in L^2(0,T;Z) $.
\end{lemma}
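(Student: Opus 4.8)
The plan is to verify directly that $w:=\mathcal S_\alpha\mathcal R_{\theta_0}u$ satisfies the identity defining the very weak solution, i.e.\
\[
  \int_0^T\big(w,(\D_{T-}^\alpha-\mathcal A)\varphi\big)_Y\,\mathrm{d}t
  =-\dual{u,\partial_{\bm n}\varphi}_{\partial\Omega\times(0,T)}
  \quad\text{for all }\varphi\in{}^0H^\alpha(0,T;Y)\cap L^2(0,T;X).
\]
First I would note that $w\in L^2(0,T;Y)$ is well defined: since $\mathcal R_{\theta_0}\in\mathcal L(Z,[X^*,Y]_{\theta_0})$ we have $\mathcal R_{\theta_0}u\in L^2(0,T;[X^*,Y]_{\theta_0})$, whence $w\in L^2(0,T;Y)$ by \cref{eq:Sg-l2}. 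I would also record that the very weak solution is unique once shown to exist: the a~priori estimate quoted before the lemma says that $\D_{T-}^\alpha-\mathcal A$ maps ${}^0H^\alpha(0,T;Y)\cap L^2(0,T;X)$ onto $L^2(0,T;Y)$, so the difference of two very weak solutions pairs to zero in $L^2(0,T;Y)$ against every element of $L^2(0,T;Y)$ and thus vanishes.

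The heart of the argument is the continuous ``integration by parts'' identity
\[
  \int_0^T(\mathcal S_\alpha g,h)_Y\,\mathrm{d}t
  =\int_0^T\dual{g,\mathcal S_\alpha^*h}_X\,\mathrm{d}t,
  \qquad g\in L^2(0,T;[X^*,Y]_{\theta_0}),\ \ h\in L^2(0,T;Y).
\]
I would establish it by inserting $(\mathcal S_\alpha g)(t)=\int_0^tE_\alpha(t-s)g(s)\,\mathrm{d}s$ from \cref{eq:Sg-l1}, exchanging the order of the $s$- and $t$-integrals (permissible because $\nm{E_\alpha(t-s)g(s)}_Y\leqslant C_{c_0,\omega_0,\mathcal M_0}(t-s)^{\theta_0\alpha-1}\nm{g(s)}_{[X^*,Y]_{\theta_0}}$ by \cref{eq:E-X*-Y}, the kernel $r^{\theta_0\alpha-1}$ being integrable near $r=0$ and $g,h\in L^2$), then using the pointwise adjoint relation $(E_\alpha(t-s)g(s),h(t))_Y=\dual{g(s),E_\alpha^*(t-s)h(t)}_X$ from \cref{eq:E-E*}, and finally recognizing $\int_s^TE_\alpha^*(t-s)h(t)\,\mathrm{d}t=(\mathcal S_\alpha^*h)(s)$ via \cref{eq:S*g}.

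It then remains to identify $\mathcal S_\alpha^*h$ with the test function. Fixing $\varphi\in{}^0H^\alpha(0,T;Y)\cap L^2(0,T;X)$ and setting $h:=(\D_{T-}^\alpha-\mathcal A)\varphi\in L^2(0,T;Y)$ (note $\mathcal A\varphi\in L^2(0,T;Y)$ since $\mathcal A\in\mathcal L(X,Y)$, and $\D_{T-}^\alpha\varphi\in L^2(0,T;Y)$ by the definition of ${}^0H^\alpha(0,T;Y)$), I would invoke the uniqueness in the backward well-posedness result quoted before the lemma: it is enough to know that $\mathcal S_\alpha^*h\in{}^0H^\alpha(0,T;Y)\cap L^2(0,T;X)$ and $(\D_{T-}^\alpha-\mathcal A)\mathcal S_\alpha^*h=h$ (recall $\mathcal A=\mathcal A^*=\Delta$ here, so $\mathcal S_\alpha^*h$ is indeed the mild solution of this backward problem). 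This is the backward, $L^2$-in-time analogue of \cref{lem:Sg-regu}, which I would derive by a density argument — approximate $h$ in $L^2(0,T;Y)$ by $C([0,T];Y)$ data, for which \cref{lem:Sg-regu} (and its backward version) applies, and pass to the limit using the continuity of $\mathcal S_\alpha^*$ from \cref{eq:Sg-l2} together with the quoted a~priori bound — or which I would simply cite from \cite{Li2019SIAM,Luo2019} (and from \cite{Evans2010} for $\alpha=1$). Granting $\mathcal S_\alpha^*h=\varphi$, the two displays above, the compatibility of the $X^*$--$X$ and $[X^*,Y]_{\theta_0}$--$[X,Y]_{\theta_0}$ dualities (legitimate since $\varphi(t)\in X\subset[X,Y]_{\theta_0}$ a.e.\ in $(0,T)$), and the definition of $\mathcal R_{\theta_0}$ give
\[
  \int_0^T\big(w,(\D_{T-}^\alpha-\mathcal A)\varphi\big)_Y\,\mathrm{d}t
  =\int_0^T\Dual{\mathcal R_{\theta_0}u(t),\varphi(t)}_{[X,Y]_{\theta_0}}\,\mathrm{d}t
  =-\int_0^T\dual{u(t),\partial_{\bm n}\varphi(t)}_{\partial\Omega}\,\mathrm{d}t,
\]
which is the required identity; combined with uniqueness, $w$ is \emph{the} very weak solution.

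The step I expect to be the main obstacle is precisely the identification $\mathcal S_\alpha^*h=\varphi$ for $L^2$-in-time $h$, that is, showing that the mild-solution formula lands in ${}^0H^\alpha(0,T;Y)\cap L^2(0,T;X)$ and reproduces the backward equation: the density argument works but the bookkeeping around the fractional derivative and the terminal condition must be done carefully, and one could instead first treat $0<\alpha<1$ and then pass to the limit $\alpha\to1-$ with the help of \cref{lem:S-to-1}. The continuous integration-by-parts identity and the Fubini interchange are routine given \cref{eq:E-X*-Y} and \cref{eq:E-E*}.
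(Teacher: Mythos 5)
Your proposal is correct in substance but takes a genuinely different route from the paper. The paper argues \emph{forward}: for $u\in C([0,T];Z)$ it invokes \cref{lem:Sg-regu} to get the strong equation $(\D_{0+}^\alpha-\mathcal A)\mathcal S_\alpha\mathcal R_{\theta_0}u=\mathcal R_{\theta_0}u$ together with enough regularity ($\mathcal S_\alpha\mathcal R_{\theta_0}u\in{}_0H^\alpha(0,T;X^*)\cap L^2(0,T;Y)$) to transfer $\D_{0+}^\alpha$ and $\mathcal A$ onto the test function via \cref{eq:dual} and \cref{eq:A-ext}, and then concludes for general $u\in L^2(0,T;Z)$ by density in $u$, using $\mathcal S_\alpha\mathcal R_{\theta_0}\in\mathcal L(L^2(0,T;Z),L^2(0,T;Y))$. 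You instead argue \emph{by transposition}: the Fubini/adjoint identity $\int_0^T(\mathcal S_\alpha g,h)_Y\,\mathrm{d}t=\int_0^T\dual{g,\mathcal S_\alpha^*h}_X\,\mathrm{d}t$ (a legitimate extension of \cref{eq:Sdelta-dual}, justified by \cref{eq:Sg-l1,eq:E-E*,eq:S*g,eq:E-X*-Y}) reduces everything to the identification $\varphi=\mathcal S_\alpha^*\big((\D_{T-}^\alpha-\mathcal A^*)\varphi\big)$, which you obtain from the uniqueness part of the backward well-posedness quoted before the lemma. Your route never needs the forward regularity lemma or the fractional integration by parts, and it treats $u\in L^2(0,T;Z)$ directly without a density argument in $u$; the price is that the whole burden lands on showing the mild solution $\mathcal S_\alpha^*h$ lies in the uniqueness class ${}^0H^\alpha(0,T;Y)\cap L^2(0,T;X)$ for $h\in L^2(0,T;Y)$. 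Be aware that the density argument you sketch for this step does not quite close as stated: \cref{lem:Sg-regu} with $\theta=1$ only yields $C([0,T];[X,Y]_{1-\epsilon})$ regularity for continuous data, which does not place $\mathcal S_\alpha^*h_n$ in $L^2(0,T;X)$, so you cannot invoke uniqueness at the approximate level either; the clean fix is the one you also offer, namely to cite the identification of the variational solution of \cite{Li2019SIAM,Luo2019} (and \cite{Evans2010} for $\alpha=1$) with the mild solution formula, which is exactly the maximal-regularity content of those references. With that citation in place, your argument is complete and is arguably more in the spirit of the transposition definition of the very weak solution than the paper's.
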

\begin{proof} 
  We only prove the case $ 0 < \alpha < 1 $, the proof of the case $ \alpha=1 $
  being easier. Assume first that $ u \in C([0,T];Z) $. By \cref{lem:Sg-regu} we
  have
  \[
    (\D_{0+}^\alpha - \mathcal A)
    \mathcal S_\alpha \mathcal R_{\theta_0} u =
    \mathcal R_{\theta_0} u
  \]
  and
  \begin{equation}
    \label{eq:100}
    \D_{0+}^\alpha \mathcal S_\alpha\mathcal R_{\theta_0} u, \,
    \mathcal A \mathcal S_\alpha \mathcal R_{\theta_0} u
    \in C([0,T];X^*).
  \end{equation}
  Hence, for any $ \varphi \in {}^0H^\alpha(0,T;Y) \cap L^2(0,T;X) $ we have
  \begin{equation}
    \label{eq:101}
    \int_0^T \dual{
      (\D_{0+}^\alpha - \mathcal A) \mathcal S_\alpha \mathcal R_{\theta_0} u,
      \varphi
    }_X \, \mathrm{d}t = \int_0^T \dual{
      \mathcal R_{\theta_0} u, \varphi
    }_X \, \mathrm{d}t.
  \end{equation}
  Because \cref{eq:100} implies $ \D_{0+}^\alpha \mathcal S_\alpha \mathcal
  R_{\theta_0} u \in L^2(0,T;X^*) $, by \cite[Lemma 3.4]{Luo2019} we have
  \[
    \mathcal S_\alpha \mathcal R_{\theta_0} u
    \in {}_0H^\alpha(0,T;X^*).
  \]
  Also, \cref{eq:100} and \eqref{eq:R(z,A)X*} imply
  \[
    \mathcal S_\alpha \mathcal R_{\theta_0} u
    \in L^2(0,T;Y).
  \]
  Consequently, by \cref{eq:dual} we have
  \begin{align*}
    \int_0^T \dual{
      \D_{0+}^\alpha \mathcal S_\alpha \mathcal R_{\theta_0} u,
      \varphi
    }_X \, \mathrm{d}t =
    \int_0^T (
    \mathcal S_\alpha \mathcal R_{\theta_0} u, \,
    \D_{T-}^\alpha \varphi
    )_Y \, \mathrm{d}t,
  \end{align*}
  and it is evident by \cref{eq:A-ext} that
  \[
    \int_0^T \dual{
      -\mathcal A \mathcal S_\alpha\mathcal R_{\theta_0} u,
      \varphi
    }_X \, \mathrm{d}t = \int_0^T
    (
    \mathcal S_\alpha\mathcal R_{\theta_0} u,
    -\mathcal A^* \varphi
    )_Y \, \mathrm{d}t.
  \]
  Combining \cref{eq:101} and the above two equations gives
  \begin{align*}
    \int_0^T \big(
      \mathcal S_\alpha \mathcal R_{\theta_0} u, \,
      (\D_{T-}^\alpha - \mathcal A^*) \varphi
    \big)_Y \, \mathrm{d}t =
    \int_0^T \dual{\mathcal R_{\theta_0}u, \varphi}_X \, \mathrm{d}t.
  \end{align*}
  The arbitrariness of $ \varphi \in {}^0H^\alpha(0,T;Y) \cap L^2(0,T;X) $
  proves that $ \mathcal S_\alpha\mathcal R_{\theta_0} u $ is indeed the very
  weak solution. The general case $ u \in L^2(0,T;Z) $ then follows from a
  standard density argument by
  \[
    \mathcal S_\alpha\mathcal R_{\theta_0} \in
    \mathcal L(L^2(0,T;Z), L^2(0,T;Y)),
  \]
  which is a direct consequence of \cref{eq:Sg-l2} and the fact $ \mathcal
  R_{\theta_0} \in \mathcal L(Z,[X^*,Y]_{\theta_0}) $. This completes the proof.
\end{proof}


\section{Numerical results}
\label{sec:numer}
This section performs three numerical experiments in two-dimensional space to
verify the theoretical results. We will use the following settings: $ \Omega :=
(0,1) \times (0,1) $; $ T= 0.1 $; $ X $, $ Y $, $ Z $, $ \mathcal R_{\theta_0} $
and $ U_\text{ad} $ are defined as in \cref{sec:appl}.


\medskip\noindent{\it Experiment 1.} Define
\begin{align*}
  g(t) &:= \begin{cases}
    1 & \text{ if } 0 < t < 2/3T, \\
    3 & \text{ if } 2/3T < t < T,
  \end{cases} \\
  v(x,y) &:= \begin{cases}
    y^{-1/2} & \text{ if } (x,y) \in \{(0,y): 0 < y < 1\}, \\
    0 & \text{ if } (x,y) \in \partial\Omega \setminus
    \{(0,y): 0 < y < 1\}.
  \end{cases}
\end{align*}
To approximate $ \mathcal S_{\alpha}\mathcal R_{\theta_0} (gv) $, we use
discretization \cref{eq:Stau}($0<\alpha<1$) or \cref{eq:Stau-1}($\alpha=1$) in
time and use the usual $ H^1(\Omega) $-conforming $ P1 $-element method in
space. Let $ U^M $ be the corresponding numerical approximation with time step $
\tau=T/2^M $ and spatial mesh size $ h=2^{-9} $. Estimates
\cref{eq:S-Stau-g,eq:R-esti} predict that $ \nm{U^M -
U^{13}}_{L^\infty(0,T;L^2(\Omega)\!)} $ is close to $ O(\tau^{0.125}) $ for $
\alpha=0.5 $ and close to $ O(\tau^{0.25}) $ for $ \alpha=1 $, and this is
confirmed by the numerical results in \cref{tab:ex1}.

\begin{table}[H]
  \footnotesize \setlength{\tabcolsep}{2pt}
  \caption{
    $\nm{\cdot}_{L^\infty(L^2)}$ means the norm $\nm{\cdot}_{L^\infty(0,T;L^2(\Omega))}$
  } \label{tab:ex1}
  \begin{tabular}{cccccc}
    \toprule &
    \multicolumn{2}{c}{$\alpha=0.5$} &&
    \multicolumn{2}{c}{$\alpha=1$} \\
    \cmidrule(r){2-3} \cmidrule(r){5-6}
    $M$      & $\nm{U^M-U^{13}}_{L^\infty(L^2)} $ & Order &&
    $\nm{U^M-U^{13}}_{L^\infty(L^2)}$ & Order \\
    $4$  & 6.93e-1 & --   &  & 4.34e-1 & --   \\
    $6$  & 6.26e-1 & 0.07 &  & 3.23e-1 & 0.21 \\
    $8$  & 5.51e-1 & 0.09 &  & 2.24e-1 & 0.26 \\
    $10$ & 4.96e-1 & 0.08 &  & 1.62e-1 & 0.24 \\
    \bottomrule
  \end{tabular}
\end{table}

\medskip\noindent{\it Experiment 2.} Define
\[
  v(x,y) := x^{-1/2}
  \quad\text{for all } (x,y) \in \Omega.
\]
To approximate $ \mathcal S_\alpha (v\delta_0) $, we use discretization
\cref{eq:Stau}($0<\alpha<1$) or \cref{eq:Stau-1}($\alpha=1$) in time and use the
usual $ H^1(\Omega) $-conforming $ P1 $-element method in space. Let $ U^M $ be
the corresponding numerical approximation with time step $ \tau=T/2^M $ and
spatial mesh size $ h=2^{-9} $. \cref{tab:ex2} illustrates that $ \nm{U^M -
U^{13}}_{L^1(0,T;H_0^1(\Omega))} $ is close to $ O(\tau^{0.25}) $ for $
\alpha=0.5 $ and close to $ O(\tau^{0.5}) $ for $ \alpha=1 $, which agrees well
with estimate \cref{eq:S-Stau-vdelta-l1}.

\begin{table}[H]
  \footnotesize \setlength{\tabcolsep}{2pt}
  \caption{
    $\nm{\cdot}_{L^1(H_0^1)}$ means the norm $\nm{\cdot}_{L^1(0,T;H_0^1(\Omega))}$
  }
  \label{tab:ex2}
  \begin{tabular}{cccccc}
    \toprule &
    \multicolumn{2}{c}{$\alpha=0.5$} &&
    \multicolumn{2}{c}{$\alpha=1$} \\
    \cmidrule(r){2-3} \cmidrule(r){5-6}
    $M$      & $\nm{U^M-U^{13}}_{L^1(H_0^1)} $ & Order &&
    $\nm{U^M-U^{13}}_{L^1(H_0^1)}$ & Order \\
    $4$ & 1.71e-0 & --   &  & 4.24e-1 & --   \\
    $5$ & 1.46e-0 & 0.23 &  & 3.01e-1 & 0.49 \\
    $6$ & 1.23e-0 & 0.25 &  & 2.10e-1 & 0.52 \\
    $7$ & 1.02e-0 & 0.27 &  & 1.45e-1 & 0.54 \\
    \bottomrule
  \end{tabular}
\end{table}

\medskip\noindent{\it Experiment 3.} Let $ \nu:=10 $, $ u_* := 0 $, $ u^* := 20
$ and
\[
  y_d(x,y) := 1 \text{ for all } (x,y) \in \Omega.
\]
To approximate problem \cref{eq:model}, we will use the temporal discretization
in \cref{ssec:discrete} and the $ H^1(\Omega) $-conforming $ P1 $-element method
to discretize the state equation in time and space, respectively; see
\cite{Gong2016} for the implementation details. Let $ U^M $ be the corresponding
numerical solution with time step $ \tau = T/2^M $ and spatial mesh size $
h=2^{-8} $. The numerical results in \cref{tab:ex3} show that $
\nm{U^M-U^{13}}_{L^2(0,T;L^2(\partial\Omega))} $ is close to $ O(\tau^{0.125})
$, which agrees with error estimate \cref{eq:model-conv}.
\begin{table}[H]
  \footnotesize \setlength{\tabcolsep}{2pt}
  \caption{
    Numerical results for Experiment 3 with $ \alpha=1 $
  }
  \label{tab:ex3}
  \begin{tabular}{ccc}
    \toprule
    $M$ &     $\nm{U^M-U^{12}}_{L^2(0,T;L^2(\partial\Omega))}$ & Order \\
    \hline
    $4$ & 2.44e-1 & --   \\
    $5$ & 2.08e-1 & 0.23 \\
    $6$ & 1.88e-1 & 0.15 \\
    $7$ & 1.66e-1 & 0.18 \\
    \bottomrule
  \end{tabular}
\end{table}

\bibliographystyle{plain}

\end{document}